 \newtheorem{thm}{Theorem}[section]
 \newtheorem{lem}[thm]{Lemma}
 \newtheorem{prop}[thm]{Proposition}
 \newtheorem{cor}[thm]{Corollary}
 \newtheorem{defn}[thm]{Definition}
 \newtheorem{rem}[thm]{Remark}
 \numberwithin{equation}{section}
\title{Aubry-Mather Measures in the Non Convex setting\thanks{F. Cagnetti was partially supported by FCT 
through the CMU$|$Portugal and UTAustin$|$Portugal programs. 
D. Gomes was partially supported by CAMGSD-LARSys through FCT Program POCTI-FEDER 
and by grants PTDC/MAT/114397/2009,
UTAustin/MAT/0057/2008, and UTA-CMU/MAT/0007/2009.
H.~V. Tran was partially supported by VEF fellowship 
and NSF Grant DMS--0903201.}}
\author{F. Cagnetti\footnotemark[2]\ 
\and D.\ Gomes\footnotemark[2]\ 
\and H.~V. Tran\footnotemark[3]}
\begin{document}
\maketitle

\renewcommand{\thefootnote}{\fnsymbol{footnote}}

\footnotetext[2]{Departamento de Matem\'atica, Instituto Superior T\'ecnico,
 Av. Rovisco Pais, 1049-001, Lisbon. (cagnetti@math.ist.utl.pt \& dgomes@math.ist.utl.pt).}
\footnotetext[3]{Department of Mathematics,
University of California Berkeley, CA, 94720-3840. (tvhung@math.berkeley.edu).}

\renewcommand{\thefootnote}{\arabic{footnote}}

\begin{abstract}
The adjoint method, introduced in \cite{Ev} and \cite{Tr}, is used 
to construct  analogs to the Aubry-Mather measures for non convex Hamiltonians.
More precisely, a general construction of probability measures, 
that in the convex setting agree with Mather measures, is provided.
These measures may fail to be invariant under the Hamiltonian flow 
and a dissipation arises, which is described by a positive semi-definite matrix of Borel measures. 
However, in the case of uniformly quasiconvex Hamiltonians the dissipation vanishes, 
and as a consequence the invariance is guaranteed.
\end{abstract}

\begin{keywords}
Aubry-Mather theory, weak KAM, non convex Hamiltonians, adjoint method.
\end{keywords}

\begin{AMS}35F20, 35F30, 37J50, 49L25\end{AMS}


\begin{section}{Introduction} 

Let us consider a periodic Hamiltonian system whose energy is described by a smooth Hamiltonian 
$H: \mathbb{T}^n \times \mathbb{R}^n \to \mathbb{R}$.
Here $\mathbb{T}^n$ denotes the $n$-dimensional torus, $n \in \mathbb{N}$. 
It is well known that the time evolution $t \mapsto (\mathbf{x} (t), \mathbf{p} (t))$ 
of the system is obtained by solving the Hamilton's ODE
\begin{equation} \label{HE}
\begin{cases}
\vspace{.2cm}
\dot{\mathbf{x}} = - D_p H ( \mathbf{x} , \mathbf{p} ) , \\
\dot{\mathbf{p}} = D_x H ( \mathbf{x} , \mathbf{p} ).
\end{cases}
\end{equation}
Assume now that, for each $P \in \mathbb{R}^n$, there exists a constant $\overline{H} (P)$
and a periodic function $u (\cdot,P)$ solving the following time independent Hamilton-Jacobi equation
\begin{equation} \label{statHJ}
H ( x , P + D_x u (x,P)) = \overline{H} (P).
\end{equation}
Suppose, in addition, that both $u (x,P)$ and $\overline{H} (P)$ are smooth functions.
Then, if the following relations
\begin{equation} \label{chvar}
X = x + D_P u ( x, P ), \quad \quad \quad \quad 
p = P + D_x u ( x, P ),
\end{equation}
define a smooth change of coordinates $X (x,p)$ and $P (x,p)$, the ODE \eqref{HE} can be rewritten as 
\begin{equation} \label{easy}
\begin{cases}
\vspace{.2cm}
\dot{\mathbf{X}} = - D_P \overline{H} ( \mathbf{P} ),  \\
\dot{\mathbf{P}} = 0.
\end{cases}
\end{equation}
Since the solution of \eqref{easy} is easily obtained,
solving \eqref{HE} is reduced to inverting the change of coordinates \eqref{chvar}.
Unfortunately, several difficulties arise.

\medskip

Firstly, it is well known that the solutions of the nonlinear PDE \eqref{statHJ} are not smooth in
the general case.
For the convenience of the reader, we recall the definition of viscosity solution.
\begin{defn}
We say that $u$ is a \textit{viscosity solution} of \eqref{statHJ}
if for each $v \in C^{\infty}(\mathbb R^n)$
\begin{itemize}
\item If $u-v$ has a local maximum at a point $x_0 \in \mathbb R^n$ then
$$
H ( x_0 , P + D v (x_0) ) \le \overline{H} (P);
$$
\item If $u-v$ has a local minimum at a point $x_0 \in \mathbb R^n$ then
$$
H ( x_0 , P + D v (x_0) ) \ge \overline{H} (P) .
$$
\end{itemize}
\end{defn}
One can anyway solve \eqref{statHJ} in this weaker sense, as made precise by the following theorem, 
due to Lions, Papanicolaou and Varadhan.
\begin{thm}[See \cite{LPV}] \label{exist}
Let $H: \mathbb{T}^n \times \mathbb{R}^n \to \mathbb{R}$ be smooth such that
\begin{equation} \label{coe}
\lim_{|p| \to + \infty} H (x,p) = + \infty.
\end{equation}
Then, for every $P \in \mathbb{R}^n$ there exists a unique  
$\overline{H} (P) \in \mathbb{R}$ such that \eqref{statHJ} 
admits a $\mathbb{Z}^n$-periodic viscosity solution $u (\cdot,P): \mathbb{T}^n \to \mathbb{R}$.
\end{thm}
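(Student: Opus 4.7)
The plan is to construct $u(\cdot,P)$ and $\overline{H}(P)$ simultaneously via a vanishing discount approximation, and then to derive uniqueness of the ergodic constant from comparison for the approximating equation. To begin, for fixed $P \in \R^n$ and $\lambda > 0$ I consider the discounted cell problem
\[
\lambda v^\lambda(x) + H(x, P + D_x v^\lambda(x)) = 0, \qquad x \in \mathbb{T}^n,
\]
which, by Perron's method and the standard comparison principle for coercive first-order Hamilton--Jacobi equations, admits a unique $\mathbb Z^n$-periodic viscosity solution $v^\lambda \in C(\mathbb{T}^n)$. Since the constants $\pm \|H(\cdot,P)\|_\infty/\lambda$ are respectively sub- and supersolutions, comparison gives $\|\lambda v^\lambda\|_\infty \le C_0 := \|H(\cdot,P)\|_\infty$, uniformly in $\lambda$. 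Rewriting the equation as $H(x, P + D_x v^\lambda) = -\lambda v^\lambda$, coercivity \eqref{coe} then produces a uniform Lipschitz bound $\|D v^\lambda\|_\infty \le L_0$, with $L_0$ depending only on $C_0$, $|P|$, and the coercivity modulus of $H$; this is obtained by the familiar doubling-of-variables argument applied to the periodic map $(x,y) \mapsto v^\lambda(x) - v^\lambda(y) - K|x-y|$ for $K$ sufficiently large.

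Setting $u^\lambda := v^\lambda - v^\lambda(0)$, the family $\{u^\lambda\}$ is equi-bounded and equi-Lipschitz on $\mathbb{T}^n$. Arzel\`a--Ascoli then delivers a sequence $\lambda_k \to 0^+$ along which $u^{\lambda_k}$ converges uniformly to some Lipschitz $\mathbb Z^n$-periodic function $u$, while, passing to a further subsequence, $\lambda_k v^{\lambda_k}(0) \to -\overline{H}(P)$ for some real number $\overline{H}(P)$. Combining this with the uniform Lipschitz bound and periodicity yields $\lambda_k v^{\lambda_k} \to -\overline{H}(P)$ uniformly on $\mathbb{T}^n$. Because $v^{\lambda_k}$ and $u^{\lambda_k}$ differ only by a constant, they share the same gradient, so the standard stability of viscosity solutions under uniform convergence of both the solutions and the right-hand side implies that $u$ is a viscosity solution of \eqref{statHJ} with the constant $\overline{H}(P)$.

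For uniqueness, let $\widetilde u$ be any $\mathbb Z^n$-periodic viscosity solution of $H(x, P + D\widetilde u) = \widetilde c$. Then $\widetilde u - \widetilde c/\lambda$ is a viscosity solution of
\[
\lambda\bigl(\widetilde u - \widetilde c/\lambda\bigr) + H(x, P + D\widetilde u) = \lambda\, \widetilde u(x).
\]
Applying the $L^\infty$-contractive comparison between viscosity solutions of the discounted Hamilton--Jacobi equation with different right-hand sides, I get $\|\widetilde u - \widetilde c/\lambda - v^\lambda\|_\infty \le \|\widetilde u\|_\infty$. Multiplying by $\lambda$ and sending $\lambda \to 0^+$ gives $|\overline{H}(P) - \widetilde c| \le 0$, so $\widetilde c = \overline{H}(P)$.

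The main obstacle is the uniform Lipschitz bound on $v^\lambda$ in the first step: without convexity of $H$, dual Lagrangian (Tonelli-type) estimates are unavailable, and the regularity of $v^\lambda$ must be extracted from coercivity alone through the viscosity doubling-of-variables technique. Every subsequent step is then a routine passage to the limit.
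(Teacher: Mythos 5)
Your proof is correct and follows the classical Lions--Papanicolaou--Varadhan vanishing-discount scheme. The paper itself does not prove Theorem~\ref{exist}; it quotes it from \cite{LPV}, so there is no in-paper proof to compare against. The nearest analogue the paper does prove is Theorem~\ref{appr}, the elliptically regularized version, and there the authors also use the discounted approximation $\lambda v^\lambda + H = \frac{\varepsilon^2}{2}\Delta v^\lambda$, obtain $\|\lambda v^\lambda\|_\infty \le C$ by comparison with constants, and then derive the gradient bound by Bernstein's method under (H3), which is necessary because the viscous term prevents the purely first-order sublevel-set argument you use. Under hypothesis \eqref{coe} and without a diffusion term, your route via coercivity of the sublevel sets $\{p : H(x,p)\le C_0\}$ is the right (and standard) one; the doubling-of-variables estimate you invoke can in fact be replaced by the even more elementary cone-comparison argument, namely touching $v^\lambda$ from above by $v^\lambda(y_0)+K|x-y_0|$ for $K$ larger than the sublevel radius, which directly contradicts the viscosity subsolution inequality. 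One small point worth making explicit: in the uniqueness step you need $\widetilde u$ to be Lipschitz for the $L^\infty$-contraction to apply (the doubling argument requires control of the $p$-slopes), and this again follows from \eqref{coe} by the same sublevel-set bound, so the hypothesis is doing double duty there. With that noted, the argument is complete, and the subsequential limit $\lambda_k v^{\lambda_k}\to -\overline{H}(P)$ combined with your contraction estimate also shows a posteriori that the full family $\lambda v^\lambda$ converges, not just the subsequence.
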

We call \eqref{statHJ} the \textit{cell problem}.
It can be proved that all the viscosity solutions of the cell problem are Lipschitz continuous, 
with Lipschitz constants uniformly bounded in $P$.

\medskip

A second important issue is that the solution $u(\cdot,P)$ of \eqref{statHJ} may not be unique, even modulo addition of constants.
Indeed, a simple example is given by the Hamiltonian $H(x,p) = p \cdot (p -~D \psi (x))$, 
where $\psi: \mathbb{T}^n \to \mathbb{R}$ is a smooth fixed function.
In this case, for $P = 0$ and $\overline{H} (0) = 0$, the cell problem is
$$
D u \cdot D (u - \psi) = 0,
$$
which admits both $u \equiv 0$ and $u = \psi$ as solutions.
Therefore, smoothness of $u(x,P)$ in $P$ cannot be guaranteed.

\medskip

Finally, even in the particular case in which both $u (x,P)$ and $\overline{H} (P)$ are smooth, 
relations \eqref{chvar} may not be invertible, or the functions $X (x,p)$ and $P (x,p)$
may not be smooth or globally defined.

\medskip

Therefore, in order to understand the solutions of Hamilton's ODE \eqref{HE} in the general case,
it is very important to exploit the functions $\overline{H} (P)$ and $u (x,P)$, 
and to extract any possible information ``encoded'' in $\overline{H} (P)$ about the dynamics.

\subsection{Classical Results: the convex case}

Classically, the additional hypotheses required in literature on the Hamiltonian $H$ are:

\begin{itemize}

\item[(i)] $H (x,\cdot)$ is strictly convex;

\vspace{.1cm}

\item[(ii)] $H (x,\cdot)$ is superlinear, i.e.
\begin{equation*} 
\lim_{|p| \to + \infty} \frac{H(x,p)}{|p|} = + \infty.
\end{equation*}
\end{itemize}
A typical example is the mechanical Hamiltonian
$$
H(x,p) = \frac{|p|^2}{2} + V (x),
$$
where $V$ is a given smooth $\mathbb{Z}^n$-periodic function.
Also, one restricts the attention to a particular class of  trajectories of \eqref{HE}, 
the so-called one sided \textit{absolute minimizers} of the action integral. 
More precisely, one first defines the Lagrangian $L: \mathbb{T}^n \times \mathbb{R}^n \to \mathbb{R}$ associated to $H$ 
as the Legendre transform of $H$:
\begin{equation} \label{Lagr}
L (x,v): = H^*(x,v) = \sup_{ p \in \mathbb{R}^n} \{ - p \cdot v - H(x,p) \}
\quad \text{ for every }(x,v) \in \mathbb{T}^n \times \mathbb{R}^n.
\end{equation}
Here the signs are set following the Optimal Control convention (see \cite{FS}).
Then, one looks for a Lipschitz curve $\mathbf{x} ( \cdot )$ which minimizes the action integral,
i.e. such that
\begin{equation} \label{min}
\int_0^{T} L ( \mathbf{x} (t) , \mathbf{\dot{x}} (t) ) \, dt
\leq \int_0^{T} L ( \mathbf{y} (t) , \mathbf{\dot{y}} (t) ) \, dt
\end{equation}
for each time $T > 0$ and each Lipschitz curve $\mathbf{y} ( \cdot )$
with $\mathbf{y} ( 0 )=  \mathbf{x} ( 0 )$ and $\mathbf{y} ( T )=  \mathbf{x} ( T )$.
Under fairly general conditions such minimizers exist, 
are smooth, and satisfy the Euler-Lagrange equations
\begin{equation} \label{EL}
\frac{d}{dt} \left[ D_v L ( \textbf{x} (t) , \dot{\textbf{x}} (t) ) \right] = D_x L ( \textbf{x} (t) , \dot{\textbf{x}} (t) ), 
\quad t \in (0,+ \infty).
\end{equation}
It may be shown that if $\textbf{x} (\cdot)$ solves \eqref{min} (and in turn \eqref{EL}), 
then $( \textbf{x} (\cdot) , \textbf{p} (\cdot) )$ is a solution of \eqref{HE},
where $\textbf{p} (\cdot) := - D_v L ( \dot{\textbf{x}} (\cdot) ,\textbf{x} (\cdot))$.
This is a consequence of assumptions (i) and (ii), that in particular guarantee 
a one to one correspondence between Hamiltonian space and Lagrangian space coordinates, 
through the one to one map $\Phi: \mathbb{T}^n \times \mathbb{R}^n \to \mathbb{T}^n \times \mathbb{R}^n$ 
defined as 
\begin{equation} \label{Phi}
\Phi (x,v) := (x, - D_v L (x,v)).
\end{equation}

There are several natural questions related to the trajectories $\mathbf{x} (\cdot)$ 
satisfying \eqref{min}, in particular in what concerns ergodic averages, asymptotic behavior and so on. 
To address such questions it is common to consider the following related problem. 

\medskip

In 1991 John N. Mather (see \cite{Mat}) proposed a relaxed version of \eqref{min}, by considering 
\begin{equation} \label{Mather91}
\min_{ \nu \in \mathcal{D} }
\int_{\mathbb{T}^n \times \mathbb{R}^n} L ( x , v ) \, d \nu (x,v),
\end{equation}
where $\mathcal{D}$ is the class of probability measures in $\mathbb{T}^n \times \mathbb{R}^n$ 
that are invariant under the Euler-Lagrange flow.
In Hamiltonian coordinates the property of invariance for a measure $\nu$ 
can be written more conveniently as:
$$
\int_{\mathbb{T}^n \times \mathbb{R}^n}
\{ \phi, H \} \, d \mu (x,p) = 0, \hspace{1cm} \text{for every } 
 \phi \in C^1_c (\mathbb{T}^n \times \mathbb{R}^n),
$$
where $\mu = \Phi_{\#} \nu$ is the push-forward of the measure $\nu$ 
with respect to the map $\Phi$, i.e., the measure $\mu$ such that 
$$
\int_{\mathbb{T}^n \times \mathbb{R}^n} \phi (x,p) \, d \mu (x,p)
= \int_{\mathbb{T}^n \times \mathbb{R}^n} \phi (x, - D_v L (x,v)) \, d \nu (x,v),
$$
for every $\phi \in C_c (\mathbb{T}^n \times \mathbb{R}^n)$.
Here the symbol $\{ \cdot, \cdot \}$ stands for the Poisson bracket, that is
$$
\{ F, G \} : = D_p F \cdot D_x G - D_x F \cdot D_p G, \hspace{1cm}\text{for every } 
F, G \in C^1 (\mathbb{T}^n \times \mathbb{R}^n).
$$
Denoting by $\mathcal{P} (\mathbb{T}^n \times \mathbb{R}^n)$ 
the class of probability measures on $\mathbb{T}^n \times \mathbb{R}^n$, 
we have
\begin{equation} \label{D}
\mathcal{D} = \left\{ \nu \in \mathcal{P} (\mathbb{T}^n \times \mathbb{R}^n) : \, 
\int_{\mathbb{T}^n \times \mathbb{R}^n}
\{ \phi, H \} \, d \Phi_{\#} \nu (x,p) = 0, \quad \text{for every } 
 \phi \in C^1_c (\mathbb{T}^n \times \mathbb{R}^n) \right\}.
\end{equation}

\medskip

The main disadvantage of problem \eqref{Mather91} is that the set \eqref{D}
where the minimization takes place depends on the Hamiltonian $H$
and thus, in turn, on the integrand $L$.
For this reason, Ricardo Ma\~ne (see \cite{Man}) considered the problem
\begin{equation} \label{MathProb}
\min_{ \nu \in \mathcal{F} }
\int_{\mathbb{T}^n \times \mathbb{R}^n} L ( x , v ) \, d \nu (x,v),
\end{equation}
where
$$
\mathcal{F} : =
\left\{ \nu \in \mathcal{P} ( \mathbb{T}^n \times \mathbb{R}^n ) : \, \int_{\mathbb{T}^n \times \mathbb{R}^n} v \cdot D \psi ( x ) \, d \nu (x,v) = 0,
\quad \text{ for every }\psi \in C^1 ( \mathbb{T}^n ) \right\}.
$$
Measures belonging to $\mathcal{F}$ are called \textit{holonomic} measures.
Notice that, in particular, to every trajectory $\mathbf{y} ( \cdot )$
of the original problem \eqref{min}
we can associate a measure $\nu_{\mathbf{y} (\cdot)} \in \mathcal{F}$.
Indeed, for every $T > 0$ we can first define a measure 
$\nu_{T, \mathbf{y}(\cdot)} \in \mathcal{P} (\mathbb{T}^n \times \mathbb{R}^n)$ by the relation
$$
\int_{\mathbb{T}^n \times \mathbb{R}^n} \phi ( x , v ) \, d \nu_{T, \mathbf{y}(\cdot)} (x,v)
:=  \frac1T \int_0^T \phi (\mathbf{y} (t) , \dot{\mathbf{y}} (t)) \, dt
\quad \text{ for every }\phi \in C_c ( \mathbb{T}^n \times \mathbb{R}^n ).
$$
Then, from the fact that
$$
\text{supp} \, \nu_{T, \mathbf{y}(\cdot)} \subset  \mathbb{T}^n \times [ - M , M ],
\quad \text{ for every }T > 0, 
\quad \quad \text{ ($M=$ Lipschitz constant of $\mathbf{y}(\cdot)$) }
$$
we infer that there exists a sequence $T_j \to \infty$ and a measure 
$\nu_{\mathbf{y}(\cdot)} \in \mathcal{P} (\mathbb{T}^n \times \mathbb{R}^n)$ such that 
$\nu_{T_j, \mathbf{y}(\cdot)} \stackrel{*}{\rightharpoonup} \nu_{\mathbf{y}(\cdot)}$
in the sense of measures, that is, 
\begin{equation} \label{defmu}
\lim_{j \to \infty} \frac{1}{T_j} \int_0^{T_j} \phi (\mathbf{y} (t) , \dot{\mathbf{y}} (t)) \, dt
= \int_{\mathbb{T}^n \times \mathbb{R}^n} \phi ( x , v ) \, d \nu_{\mathbf{y}(\cdot)} (x,v)
\quad \text{ for every } \phi \in C_c ( \mathbb{T}^n \times \mathbb{R}^n).
\end{equation}
Choosing $\phi (x,v) = v \cdot D \psi (x)$ in \eqref{defmu} it follows that $\nu_{\mathbf{y}(\cdot)} \in \mathcal{F}$, since
$$
\int_{\mathbb{T}^n \times \mathbb{R}^n} v \cdot D \psi ( x ) \, d \nu_{\mathbf{y}(\cdot)} (x,v)
= \lim_{j \to \infty} \frac{1}{T_j} \int_0^{T_j}  \dot{\textbf{y}} (t) \cdot D \psi ( \textbf{y} (t) ) \, dt 
= \lim_{j \to \infty}  \frac{ \psi (\textbf{y} (T_j)) - \psi (\textbf{y} (0)) }{T_j} = 0.
$$

\medskip

In principle, since $\mathcal{F}$ is much larger than the class of measures $\mathcal{D}$, 
we could expect the last problem not to have the same solution of \eqref{Mather91}.
However, Ma\~ne proved that every solution of \eqref{MathProb} 
is also a minimizer of \eqref{Mather91}.

A more general version of  \eqref{MathProb} consists in studying,
for each $P \in \mathbb{R}^n$ fixed,
\begin{equation} \label{MathProb2}
\min_{ \nu \in \mathcal{F} }
\int_{\mathbb{T}^n \times \mathbb{R}^n} \left( L ( x , v ) + P \cdot v \right) \, d \nu (x,v),
\end{equation}
referred to as \textit{Mather problem}.
Any minimizer of \eqref{MathProb2} is said to be a \textit{Mather measure}.
An interesting connection between the Mather problem and the time independent Hamilton-Jacobi equation
\eqref{statHJ} 
is established by the identity:
\begin{equation} \label{iden}
- \overline{H} (P) = \min_{ \nu \in \mathcal{F} } 
\int_{\mathbb{T}^n \times \mathbb{R}^n} \left( L ( x , v ) + P \cdot v \right) \, d \nu (x,v).
\end{equation}
Notice that problems \eqref{MathProb} and \eqref{MathProb2}
have the same Euler-Lagrange equation, but possibly different minimizers, 
since the term $P \cdot v$ is a null Lagrangian.
The following theorem gives a characterization of Mather measures in the convex case.
\begin{thm} \label{char}
Let $H: \mathbb{T}^n \times \mathbb{R}^n \to \mathbb{R}$ be a smooth function satisfying (i) and (ii),
and let $P \in \mathbb{R}^n$.
Then, $\nu \in \mathcal{P} (\mathbb{T}^n \times \mathbb{R}^n)$ 
is a solution of \eqref{MathProb2} if and only if:
\begin{align*}
(a)\hspace{.5cm}&\int_{\mathbb{T}^n \times \mathbb{R}^n}
H (x,p) \, d \mu (x,p) = \overline{H} (P) = H (x,p) \quad \quad \mu\text{-a.e.};\\
(b)\hspace{.5cm}&\int_{\mathbb{T}^n \times \mathbb{R}^n}
(p-P) \cdot D_p H (x,p) \, d \mu (x,p) = 0 ; \\
(c)\hspace{.5cm}&\int_{\mathbb{T}^n \times \mathbb{R}^n}
D_p H (x,p) \cdot D \phi (x) \, d \mu (x,p) = 0, \hspace{1cm} \text{for every } 
 \phi \in C^1 (\mathbb{T}^n),
\end{align*}
where $\mu = \Phi_{\#} \nu$ and $\overline{H} (P)$ is defined by Theorem \ref{exist}.
\end{thm}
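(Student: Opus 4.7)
The plan is to transfer Mather's problem to Hamiltonian coordinates via $\Phi$ and then identify minimizers by means of the convex Young inequality $L(x,v)+p\cdot v+H(x,p)\geq 0$, whose equality set is exactly the graph $\{v=-D_pH(x,p)\}$. Along $\Phi$ one has $v=-D_pH(x,p)$ and $L(x,v)=-p\cdot v-H(x,p)$, so a direct computation yields, for $\mu=\Phi_\#\nu$,
\begin{equation*}
\int_{\mathbb{T}^n\times\R^n} (L(x,v)+P\cdot v)\,d\nu \;=\; \int_{\mathbb{T}^n\times\R^n} \bigl((p-P)\cdot D_pH(x,p)-H(x,p)\bigr)\,d\mu,
\end{equation*}
while the holonomy condition $\int v\cdot D\phi(x)\,d\nu=0$ translates into condition (c). Hence \eqref{MathProb2} is equivalent to minimizing the right-hand side above over probability measures $\mu$ on $\mathbb{T}^n\times\R^n$ satisfying (c).

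The sufficiency direction is then essentially bookkeeping: assuming (a)--(c), I would use (b) and (a) in the displayed identity to compute $\int((p-P)\cdot D_pH-H)\,d\mu=-\overline{H}(P)$, which matches the minimum predicted by \eqref{iden}. For the necessity direction, (c) is immediate from $\nu\in\mathcal{F}$, and the content is concentrated in (a) and (b). I would first mollify a viscosity solution $u$ of \eqref{statHJ} to $u^\eps=u\ast\eta_\eps$; convexity of $H$ in $p$ and Jensen's inequality give
\begin{equation*}
H(x,P+Du^\eps(x))\leq \overline{H}(P)+\omega(\eps), \qquad \omega(\eps)\to 0.
\end{equation*}
Applying the Young inequality with $p=P+Du^\eps(x)$ and integrating against $\nu$, the holonomy of $\nu$ kills the linear term $\int Du^\eps\cdot v\,d\nu$, and sending $\eps\to 0$ yields $\int(L+P\cdot v)\,d\nu\geq -\overline{H}(P)$, with equality at any minimizer.

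From this equality and the one-sided bound above, I would argue that the Young defect $L(x,v)+(P+Du^\eps(x))\cdot v+H(x,P+Du^\eps(x))$, which is non-negative and whose integral against $\nu$ tends to zero, must vanish $\nu$-a.e.\ along a subsequence. Strict convexity of $H$ in $p$ then identifies $P+Du^\eps(x)\to p$ on the support of $\mu$, and continuity of $H$ delivers $H(x,p)=\overline{H}(P)$ $\mu$-a.e., proving (a). Condition (b) follows at once by combining (a) with the identity of the first step, since $\int((p-P)\cdot D_pH-H)\,d\mu=-\overline{H}(P)$ reduces to $\int(p-P)\cdot D_pH\,d\mu=0$ once $\int H\,d\mu=\overline{H}(P)$ is known.

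The delicate point is this last limiting argument: assumption (i) provides only strict, not uniform, convexity, so the identification $P+Du^\eps\to p$ on $\mathrm{supp}\,\mu$ cannot be achieved by a quadratic estimate and has to be extracted qualitatively from the vanishing of the non-negative Young defect and the one-sided bound on $H(x,P+Du^\eps)$. Controlling this limit, together with the continuity of $H$ on the uniformly bounded support of $\mu$ guaranteed by the Lipschitz estimates on viscosity solutions recorded after Theorem \ref{exist}, is the crux of the necessity direction.
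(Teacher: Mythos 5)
Your proof of the sufficiency direction matches the paper's: integrate the Legendre identity against $\mu$, use (a)--(c), and invoke \eqref{iden}. For necessity, however, you take a genuinely different route. The paper simply cites Proposition \ref{oiu} (in particular the graph theorem, a deep result quoted from Mather, Fathi, and Evans--Gomes), from which (a) is immediate: since $\mu$ lives on $\{p = P + D_x u\}$ and $u$ solves \eqref{statHJ}, $H(x,p) = \overline{H}(P)$ on $\mathrm{supp}\,\mu$. You instead try to derive (a)--(b) from scratch using the Fenchel--Young inequality, mollification of $u$, and strict convexity. This is essentially a self-contained weak-KAM argument, and it does avoid importing the graph theorem; the trade-off is that the details you need are nontrivial, and your sketch has a real gap at the crucial step.

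Specifically, the step ``continuity of $H$ delivers $H(x,p)=\overline{H}(P)$ $\mu$-a.e.'' does not follow from what precedes it. You have two ingredients: $P+Du^\eps(x)\to p$ for $\nu$-a.e.\ $(x,v)$ (this is fine --- boundedness of $Du^\eps$ plus strict convexity of $g(q):=L(x,v)+q\cdot v+H(x,q)$ with unique zero at $q=p$, applied to the vanishing Young defect, forces the full sequence to converge), and the pointwise upper bound $H(x,P+Du^\eps(x))\leq\overline{H}(P)+C\eps$ from Jensen. Passing to the limit then yields only the \emph{one-sided} inequality $H(x,p)\leq\overline{H}(P)$ $\mu$-a.e.; nothing in the chain of reasoning you describe produces the reverse inequality. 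The point you single out as ``delicate'' --- the qualitative identification of the limit without uniform convexity --- is in fact unproblematic; the actual missing piece is the lower bound. It can be supplied from what you already have: the Young inequality integrated against $\nu$, together with holonomy killing $\int Du^\eps\cdot v\,d\nu$ and the minimizer identity $\int(L+P\cdot v)\,d\nu=-\overline{H}(P)$, gives $\int H(x,P+Du^\eps)\,d\nu\geq\overline{H}(P)$. Combined with the pointwise upper bound $H(x,P+Du^\eps)\leq\overline{H}(P)+C\eps$ this yields $\overline{H}(P)+C\eps - H(x,P+Du^\eps)\geq 0$ with vanishing integral, hence $H(x,P+Du^\eps(x))\to\overline{H}(P)$ in $L^1(\nu)$ and $\nu$-a.e.\ along a subsequence. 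Only now does matching the two limits give $H(x,p)=\overline{H}(P)$ $\mu$-a.e. Without this additional step your argument only proves $H(x,p)\leq\overline{H}(P)$ and, via your identity, the single scalar relation $\int H\,d\mu - \int(p-P)\cdot D_pH\,d\mu = \overline{H}(P)$, which does not separate (a) from (b).
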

Before proving Theorem \ref{char} we state the following proposition, which is a consequence
of the results in \cite{Man}, \cite{FATH1}, \cite{FATH2}, \cite{FATH3}, \cite{FATH4} and \cite{EG}.
\begin{prop} \label{oiu}
Let $H: \mathbb{T}^n \times \mathbb{R}^n \to \mathbb{R}$ be a smooth function satisfying (i) and (ii).
 Let $P \in \mathbb{R}^n$, let $\nu \in \mathcal{P} (\mathbb{T}^n \times \mathbb{R}^n)$ 
be a minimizer of \eqref{MathProb2} and set $\mu = \Phi_{\#} \nu$.
Then,
\begin{itemize}

\item[(1)] $\mu$ is invariant under the Hamiltonian dynamics, i.e.
$$
\int_{\mathbb{T}^n \times \mathbb{R}^n}
\{ \phi, H \} \, d \mu (x,p) = 0 \hspace{1cm} \text{for every } 
 \phi \in C^{1}_c (\mathbb{T}^n \times \mathbb{R}^n);
$$

\item[(2)] $\mu$ is supported on the graph 
$$
\Sigma := \{ (x,p) \in \mathbb{T}^n \times \mathbb{R}^n : p = P+D_x u (x,P) \},
$$

\end{itemize}
where $u$ is any viscosity solution of \eqref{statHJ}.
\end{prop}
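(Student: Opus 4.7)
The strategy is to establish the support statement (2) directly via the Fenchel--Young duality between $L$ and $H$ combined with the minimality identity \eqref{iden}, and to obtain the invariance statement (1) from Mañé's classical principle that minimizers over the holonomic class $\mathcal{F}$ are automatically invariant under the Euler--Lagrange flow. Both steps rely essentially on the convexity and superlinearity assumptions (i)--(ii) on $H$.

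For (2), I fix a viscosity solution $u(\cdot,P)$ of the cell problem \eqref{statHJ} given by Theorem \ref{exist}, and recall the Fenchel--Young inequality
$$
L(x,v) + p\cdot v \;\geq\; -H(x,p) \qquad \text{for every }(x,v,p),
$$
with equality if and only if $p = -D_v L(x,v)$. Setting $p = P + D_x u(x,P)$, which by \eqref{statHJ} satisfies $H(x,p)=\overline{H}(P)$, and integrating against a minimizer $\nu\in\mathcal{F}$ of \eqref{MathProb2}, I obtain
$$
\int (L + P\cdot v)\, d\nu \,+\, \int D_x u(x,P)\cdot v\, d\nu \;\geq\; -\overline{H}(P).
$$
Once the second integral is shown to vanish, the identity \eqref{iden} forces equality $\nu$-almost everywhere in Fenchel--Young, so $p = -D_v L(x,v) = P + D_x u(x,P)$ for $\Phi_{\#}\nu$-a.e.~$(x,p)$. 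This is exactly assertion (2).

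For (1), I would invoke the Mañé principle (see \cite{Man}, \cite{FATH1}--\cite{FATH4}, \cite{EG}) which states that any minimizer of \eqref{MathProb2} within $\mathcal{F}$ actually lies in the smaller invariant class $\mathcal{D}$. Since $\Phi$ conjugates the Euler--Lagrange flow with the Hamiltonian flow under hypotheses (i)--(ii), the condition $\nu\in\mathcal{D}$ is equivalent to the Poisson bracket condition in (1) for $\mu=\Phi_{\#}\nu$. A more hands-on alternative, once (2) is available, is to write $\{\phi,H\}$ as a total time derivative along the $\Phi$-pullback flow and exploit the holonomic constraint, but the Mañé route is shorter and is already used in references cited in the paper.

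The main technical obstacle is justifying the cancellation $\int D_x u\cdot v\, d\nu = 0$, because $u(\cdot,P)$ is in general only Lipschitz while the holonomic relation in the definition of $\mathcal{F}$ is tested against $\psi\in C^1(\mathbb{T}^n)$. I would proceed by standard mollification: set $u^{\varepsilon}:= u*_x\rho_{\varepsilon}\in C^{\infty}(\mathbb{T}^n)$, apply the holonomic identity to get $\int Du^{\varepsilon}\cdot v\, d\nu=0$, and pass to the limit as $\varepsilon\to 0$. The passage is controlled by the uniform Lipschitz bound on $u$ (noted after Theorem \ref{exist}), by the boundedness of $\mathrm{supp}\,\nu$ in the $v$-variable inherited from Lipschitz bounds on minimizing trajectories, and by a.e.~convergence $Du^{\varepsilon}\to D_x u$ guaranteed by semiconcavity of $u$ in the convex setting. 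This regularity--approximation step is where hypothesis (i) is truly essential, and it is the delicate point I would expect the author's proof to handle carefully.
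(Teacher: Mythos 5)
First, a point of comparison: the paper does not actually prove Proposition \ref{oiu} --- it is stated as ``a consequence of the results in \cite{Man}, \cite{FATH1}--\cite{FATH4} and \cite{EG}'' and then used as a black box in the proof of Theorem \ref{char}. Your treatment of part (1), namely invoking Ma\~ne's theorem that holonomic minimizers are automatically invariant, therefore coincides with what the paper does. For part (2) your architecture (Fenchel--Young inequality, the holonomic constraint, and the identity \eqref{iden}) is the standard route and is essentially the Evans--Gomes proof of the graph theorem.

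However, the limit passage you propose for the cancellation $\int D_x u\cdot v\, d\nu=0$ contains a genuine gap. Mollification gives $\int Du^{\varepsilon}\cdot v\, d\nu=0$ for each $\varepsilon$, and semiconcavity gives $Du^{\varepsilon}\to D_x u$ at every point of differentiability of $u$, hence Lebesgue-a.e.; but the projection $\theta=\pi_{\#}\nu$ of a Mather measure is in general singular with respect to Lebesgue measure (for instance, supported on a single periodic orbit), so Lebesgue-a.e.\ convergence gives no control on $\int Du^{\varepsilon}\cdot v\, d\nu$. The same issue affects your use of $H(x,P+D_x u)=\overline{H}(P)$, which a viscosity solution satisfies only at its points of differentiability. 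The graph theorem is, in essence, precisely the assertion that $u$ is differentiable at every point of $\mathrm{supp}\,\theta$ with the right gradient --- this is why the paper calls property (2) ``a highly nontrivial result'' --- and it cannot be fed into the argument as an assumption. The standard repair uses convexity of $H(x,\cdot)$ quantitatively: by Jensen's inequality the mollification $u^{\varepsilon}$ is an approximate subsolution, $H(x,P+Du^{\varepsilon})\le \overline{H}(P)+C\varepsilon$ \emph{everywhere}, so applying Fenchel--Young with $p=P+Du^{\varepsilon}(x)$ and integrating against $\nu$ shows that the nonnegative Fenchel--Young defect has $\nu$-integral at most $C\varepsilon$; strict (or uniform) convexity then forces $P+Du^{\varepsilon}\to -D_v L(x,v)$ in $L^2(\nu)$, from which differentiability of $u$ on $\mathrm{supp}\,\theta$ and the graph property follow. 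Alternatively one argues as Mather and Fathi do, via the a priori Lipschitz estimate for the velocity field over the projected support. Either way, the step you flagged as ``delicate'' does not close as written, and closing it requires this additional input rather than a.e.\ convergence of mollified gradients.
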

We observe that property (2), also known as the \textit{graph theorem}, is a highly nontrivial result.
Indeed, by using hypothesis (ii) one can show that any solution $u (\cdot,P)$ of \eqref{statHJ} is Lipschitz continuous,
but higher regularity cannot be expected in the general case. 
\begin{proof}[Proof of Theorem \ref{char}]
To simplify, we will assume $P=0$.

Let $\nu$ be a minimizer of \eqref{MathProb2}.
By the previous proposition, we know that properties (1) and (2) hold;
let us prove that $\mu = \Phi_{\#} \nu$ satisfies $(a)$--$(c)$.
By \eqref{iden}, we have
\[
\int_{\mathbb{T}^n \times \mathbb{R}^n} L (x,v) \, d \nu (x,v) =-\overline{H} (0). 
\]
Furthermore, because of (2)
\[
\int_{\mathbb{T}^n \times \mathbb{R}^n} H (x,p) \, d \mu (x,p)= \overline{H} (0), 
\]
that is, (a). Since $H (x,p) = - L (x,- D_p H (x,p)) + p \cdot D_pH (x,p)$, this implies that 
\[
\int_{\mathbb{T}^n \times \mathbb{R}^n} p \cdot D_pH (x, p) \, d\mu (x,p)=0, 
\]
and so (b) holds. Finally, (c) follows directly from the fact that $\nu \in \mathcal{F}$.

Let now $\mu \in \mathcal{P} ( \mathbb{T}^n \times \mathbb{R}^n )$ satisfy $(a)$--$(c)$, and let us show 
that $\nu= ( \Phi^{-1} )_{\#}  \mu$ is a minimizer of \eqref{MathProb2}. 
First of all, observe that $\nu \in \mathcal{F}$. Indeed, by using (c) for every $\psi \in C^1 ( \mathbb{T}^n )$
$$
\int_{\mathbb{T}^n \times \mathbb{R}^n} v \cdot D \psi ( x ) \, d \nu (x,v) 
= - \int_{\mathbb{T}^n \times \mathbb{R}^n} D_p H (x,p) \cdot D \psi ( x ) \, d \mu (x,p) = 0.
$$
Let now prove that $\nu$ is a minimizer.

Integrating equality $H (x,p) = - L (x, - D_p H (x,p)) + p \cdot D_pH (x,p)$ with respect to $\mu$, 
and using (a) and (b) we have 
\begin{align*}
\overline{H} (0)& = \int_{\mathbb{T}^n \times \mathbb{R}^n} H (x,p) \, d \mu (x,p) \\
&= - \int_{\mathbb{T}^n \times \mathbb{R}^n} L (x, - D_p H (x,p)) \, d \mu (x,p) 
+ \int_{\mathbb{T}^n \times \mathbb{R}^n}  p \cdot D_pH (x,p) \, d \mu (x,p) \\
&= - \int_{\mathbb{T}^n \times \mathbb{R}^n} L (x, - D_p H (x,p)) \, d \mu (x,p)
= - \int_{\mathbb{T}^n \times \mathbb{R}^n} L (x,v) \, d \nu (x,v).
\end{align*}
By \eqref{iden}, $\nu$ is a minimizer of \eqref{MathProb2}.

\end{proof}

\subsection{The Non Convex Case}

The main goal of this paper is to use the techniques of \cite{Ev} and \cite{Tr}
to construct Mather measures under fairly general hypotheses, 
when the variational approach just described cannot be used.
Indeed, when (i) and (ii) are satisfied $H$ coincides with the Legendre transform of $L$, that is, identity $H = H^{**}$ holds.
Moreover, $L$ turns out to be convex and superlinear as well, 
and relation \eqref{Phi} defines a smooth diffeomorphism, 
that allows to pass from Hamiltonian to Lagrangian coordinates.

\medskip

First of all, we extend the definition of Mather measure
to the non convex setting, without making use of the Lagrangian formulation.
\begin{defn} 
We say that a measure $\mu \in \mathcal{P} (\mathbb{T}^n \times \mathbb{R}^n)$ is a \textit{Mather measure} if 
there exists $P \in \mathbb{R}^n$ such that properties (a)--(c) are satisfied.
\end{defn}
The results exposed in the previous subsection show that, modulo the push-forward operation, 
this definition is equivalent to the usual one in literature (see e.g. \cite{F}, \cite{Man}, \cite{Mat}).
We would like now to answer the following natural questions:

\smallskip

\begin{itemize}

\item \textbf{Question 1:} Does a Mather measure exist?

\item \textbf{Question 2:} Let $\mu$ be a Mather measure. Are properties (1) and (2) satisfied?

\end{itemize}
We just showed that in the convex setting 
both questions have affirmative answers.
Before addressing these issues, let us make some hypotheses on the Hamiltonian $H$.
We remark that without any coercivity assumption (i.e. without any condition similar to (ii)),
there are no a priori bounds for the modulus of continuity
of periodic solutions of \eqref{statHJ}.
Indeed, for $n=2$ consider the Hamiltonian 
$$
H(x,p) = p_1^2 - p_2^2 \hspace{2cm} \text{for every } 
p=(p_1,p_2) \in \mathbb{R}^2.
$$
In this case, equation \eqref{statHJ} for $P=0$ and $\overline{H} (P) = 0$ becomes
\begin{equation} \label{counterex}
u_x^2 - u_y^2 = 0.
\end{equation}
Then, for every choice of $f:\mathbb{R} \to \mathbb{R}$ of class $C^1$,
the function $u(x,y) = f(x-y)$ is a solution of \eqref{counterex}.
Clearly, there are no uniform Lipschitz bounds for 
the family of all such functions $u$.
Throughout all the paper, we will assume that 
\begin{itemize}

\item[(H1)] $H$ is smooth;

\item[(H2)] $H (\cdot,p)$ is $\mathbb{Z}^n$-periodic for every $p \in \mathbb{R}^n$;  

\item[(H3)] $ \lim_{|p| \to + \infty}  \left( \dfrac{1}{2}|H(x,p)|^2 + D_x H(x,p) \cdot p \right)= + \infty$
uniformly in $x$.

\end{itemize}
Note that if hypothesis (ii) of the previous subsection holds uniformly in $x$ 
and  we have a bound on $D_x H(x,p)$, e.g. $|D_x H(x,p)| \leq C(1+|p|)$, then
(H3) holds.

First we consider, for every $\varepsilon > 0$, a regularized version of \eqref{statHJ}, 
showing existence and uniqueness of a constant $\overline{H}^{\varepsilon} (P)$ such that 
\begin{equation} \label{statHJepsintro}
- \frac{\varepsilon^2}{2} \Delta u^{\varepsilon} (x) + H (x, P + D u^{\varepsilon} (x) ) = \overline{H}^{\varepsilon} (P)
\end{equation}
admits a $\mathbb{Z}^n$-periodic viscosity (in fact smooth) solution  (see Theorem \ref{appr}).

Thanks to (H3), we can establish a uniform bound on $\| Du^\varepsilon \|_{L^{\infty}}$
and prove that, up to subsequences, $\overline{H}^{\varepsilon} (P) \to \overline{H} (P)$
and  $u^{\varepsilon} (\cdot,P)$ converges uniformly to $u (\cdot,P)$ as $\varepsilon \to 0$,
where $\overline{H} (P)$ and $u (\cdot,P)$ solve equation \eqref{statHJ}.

Observe that, in particular, this shows that Theorem \ref{exist} still holds true
under assumption (H3) when \eqref{coe} does not hold, as for instance when $n=1$ and
\begin{equation*}
H (x,p) = p^3 + V (x), \quad V \text{ smooth and } 
\mathbb{Z}^n \text{-periodic}.
\end{equation*}
On the other hand \eqref{coe} does not imply (H3), see the Hamiltonian
\begin{equation*}
H (x,p) = p^2 \left( 3 + \sin ( e^{p^2} (\cos 2 \pi x ) ) \right) 
\end{equation*}
(here again $n=1$).
Thus, although (H3) seems to be a technical assumption strictly related 
to the particular choice of the approximating equations
\eqref{statHJepsintro}, it is not less general than \eqref{coe}, 
as just clarified by the previous examples.
Anyway, it is not clear at the moment if the results we prove in the present paper 
are still true for Hamiltonians satisfying 
\eqref{coe} but not (H3).

Once suitable properties for the sequence $\{ u^{\varepsilon} \}$ are proved,
for every $\varepsilon > 0$ we define the perturbed Hamilton SDE (see Section \ref{Sec3}) as 
\begin{equation} \label{StocHamDynintr}
\begin{cases}
d \mathbf{x}^{\varepsilon} = - D_p H ( \mathbf{x}^{\varepsilon}, \mathbf{p}^{\varepsilon}) \, dt + \varepsilon \, d w_t, \\
d \mathbf{p}^{\varepsilon} = D_x H (\mathbf{x}^{\varepsilon}, \mathbf{p}^{\varepsilon})\, dt + \varepsilon D^2 u^{\varepsilon} \, d w_t,
\end{cases}
\end{equation}
where $w_t$ is a $n$-dimensional Brownian motion.
The main reason why we use a stochastic approach,
is that in this way we emphasize the connection with the convex setting
by averaging functions along trajectories. 
Nevertheless, our techniques can also be introduced in a purely PDE way
(see Section \ref{SecPDE} 
for a sketch of this approach).

\smallskip
\noindent
In the second step, as just explained, in analogy to what is done in the convex setting 
we encode the long-time behavior of the solutions
$t \mapsto ( \mathbf{x}^{\varepsilon} (t) , \mathbf{p}^{\varepsilon} (t))$ of \eqref{StocHamDynintr}
into a family of probability measures $\{ \mu^{\varepsilon} \}_{\varepsilon > 0}$, defined by
$$
\displaystyle \int_{\mathbb{T}^n \times \mathbb{R}^n} \phi (x,p) \, d \mu^{\varepsilon} (x,p)
:= \lim_{T_j \to \infty} \frac{1}{T_j} \, E \left[ \int_0^{T_j} \phi (\mathbf{x}^{\varepsilon}(t),\mathbf{p}^{\varepsilon}(t)) \, dt \right]
\quad \text{ for every }\phi \in C_c ( \mathbb{T}^n \times \mathbb{R}^n ),
$$
where with $E [\cdot]$ we denote the expected value and
the limit is taken along appropriate subsequences $\{ T_j \}_{j \in \mathbb{N}}$
(see Section \ref{PhSp}).

Using the techniques developed in \cite{Ev}, we are able to provide some bounds
on the derivatives of the functions $u^{\varepsilon}$.
More precisely, defining $\theta_{\mu^{\varepsilon}}$ as the projection on the torus $\mathbb{T}^n$
of the measure $\mu^{\varepsilon}$ (see Section \ref{Proj}), 
we give estimates on the $(L^2, d \theta_{\mu^{\varepsilon}} )$-norm 
of the second and third derivatives of $u^{\varepsilon}$, uniformly w.r.t. $\varepsilon$ (see Proposition \ref{propest}).

In this way, we show that there exist a Mather measure $\mu$ 
and a nonnegative, symmetric $n \times n$ matrix of Borel measures $(m_{kj})_{k,j=1,\ldots,n}$ 
such that $\mu^{\varepsilon}$ converges
weakly to $\mu$ up to subsequences and
\begin{equation} \label{veryimpintro}
\int_{\mathbb{T}^n \times \mathbb{R}^n} \{  \phi , H \}  \, d \mu
+ \int_{\mathbb{T}^n \times \mathbb{R}^n}  
\phi_{p_k p_j} \, d m_{kj} = 0, \hspace{1cm}\forall \, \phi \in C^{2}_c (\mathbb{T}^n \times \mathbb{R}^n),
\end{equation} 
with sum understood over repeated indices (see Theorem \ref{exist2}). 
As in \cite{Ev}, we call $m_{kj}$ the \textit{dissipation measures}.
Relation \eqref{veryimpintro} is the key point of our work, since it immediately shows 
the differences with the convex case.
Indeed, the Mather measure $\mu$ is invariant under the Hamiltonian flow 
if and only the dissipation measures $m_{kj}$ vanish.
When $H(x,\cdot)$ is convex, this is guaranteed by an improved version of 
the estimates on the second derivatives of $u^{\varepsilon}$ (see Proposition \ref{propest}, estimate \eqref{unifcx}).
We give in Section \ref{counter} a one dimensional example showing that,
in general, the dissipation measures $(m_{kj})_{k,j=1,\ldots,n}$ do not disappear.

We study property (2) in Section \ref{comcom}.
In particular, we show that if \eqref{statHJ} admits a solution $u(\cdot,P)$
of class $C^1$, 
which is a rather restrictive condition,
then the corresponding Mather measure $\mu$ given 
by Theorem \ref{exist} 
satisfies
$$
D_p H (x, P + D_x u (x,P))\cdot (p-P-D_x u (x,P))=0
$$
in the support of $\mu$ (see Corollary \ref{rem_comp}).
Observe that this single relation 
is not enough to give us (2) in general, e.g. $n \ge 2$.

Finally, we are able to provide some examples
of non-convex Hamiltonians  (see Section \ref{Ex}), for which both properties (1) and (2) are satisfied.
We observe that the case of strictly quasiconvex Hamiltonians,
which appears among our examples, could also be studied using duality (see Section~\ref{qcx}). 

\end{section}

\begin{section}{Elliptic regularization of the cell problem} \label{sec2}

We start by quoting a classical result concerning an elliptic regularization of equation \eqref{statHJ}.
This, also called vanishing viscosity method,
is a well known tool to study viscosity solutions. In the context 
of Mather measures this procedure was introduced by Gomes in \cite{G}, see also \cite{Nal},
\cite{RenatoNaliniEtc}, \cite{RenatoHector}. 
\begin{thm} \label{appr}
For every $\varepsilon >0$ and every $P \in \mathbb{R}^n$, 
there exists a unique number $\overline{H}^{\varepsilon} (P) \in \mathbb{R}$
such that the equation
\begin{equation} \label{statHJeps}
- \frac{\varepsilon^2}{2} \Delta u^{\varepsilon} (x) + H (x, P + D u^{\varepsilon} (x) ) = \overline{H}^{\varepsilon} (P)
\end{equation}
admits a unique (up to constants) $\mathbb{Z}^n$-periodic viscosity solution.
Moreover, for every $P \in \mathbb{R}^n$
$$
\lim_{\varepsilon \to 0^+} \overline{H}^{\varepsilon} (P)= \overline{H} (P),
\qquad \text{ and } \qquad u^{\varepsilon} \to u \text{ uniformly} \qquad \text{ (up to subsequences), }
$$
where $\overline{H} (P) \in \mathbb{R}$ and $u: \mathbb{T}^n \to \mathbb{R}$ 
are such that \eqref{statHJ} is satisfied in the viscosity sense.
\end{thm}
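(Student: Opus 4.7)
The proof naturally divides into two stages: constructing $(\overline{H}^{\varepsilon}(P), u^{\varepsilon})$ at fixed $\varepsilon>0$, and then passing to the limit $\varepsilon\to 0^+$. The main obstacle is the $\varepsilon$-independent Lipschitz bound on $u^{\varepsilon}$, where the non-standard assumption (H3) is crucial.

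For fixed $\varepsilon>0$ I would employ the classical discounted approximation: for $\delta>0$ solve
\begin{equation*}
\delta v^{\varepsilon,\delta} - \frac{\varepsilon^2}{2}\Delta v^{\varepsilon,\delta} + H(x, P + Dv^{\varepsilon,\delta}) = 0 \quad \text{on } \mathbb{T}^n.
\end{equation*}
Existence of a smooth periodic solution follows from a Leray--Schauder fixed-point argument combined with interior $C^{2,\alpha}$ Schauder estimates, and uniqueness from the comparison principle. The bound $\delta\|v^{\varepsilon,\delta}\|_\infty\le\max_x|H(x,P)|$ is immediate from evaluating the equation at extrema, and a Bernstein-type gradient estimate of the kind described below (but carried out at fixed $\varepsilon$, so the resulting constant is allowed to depend on $\varepsilon$) gives $\|Dv^{\varepsilon,\delta}\|_\infty\le C(\varepsilon)$ independently of $\delta$. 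Setting $\overline{H}^{\varepsilon}(P):=-\lim\delta v^{\varepsilon,\delta}(0)$ and $u^{\varepsilon}:=\lim(v^{\varepsilon,\delta}-v^{\varepsilon,\delta}(0))$ along a subsequence $\delta\to 0$ produces a smooth periodic solution of \eqref{statHJeps}. Uniqueness of $\overline{H}^{\varepsilon}(P)$ and of $u^{\varepsilon}$ modulo constants then follows from the strong maximum principle applied to the difference of two candidate solutions.

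For the limit $\varepsilon\to 0^+$, the $\varepsilon$-uniform bound $\overline{H}^{\varepsilon}(P)\in[\min_x H(x,P),\max_x H(x,P)]$ is trivial, but for $\|Du^{\varepsilon}\|_\infty$ I would run a Bernstein computation on $w:=\tfrac12|P+Du^{\varepsilon}|^2$. Differentiating \eqref{statHJeps} in $x_k$, contracting with $p_k:=P_k+u^{\varepsilon}_{x_k}$, and using the identity $p_k u^{\varepsilon}_{x_k x_l x_l}=\Delta w-|D^2u^{\varepsilon}|^2$ yields
\begin{equation*}
-\frac{\varepsilon^2}{2}\Delta w + \frac{\varepsilon^2}{2}|D^2u^{\varepsilon}|^2 + D_xH(x,p)\cdot p + H_{p_j}(x,p)\,w_{x_j}=0.
\end{equation*}
At an interior maximum $x_0\in\mathbb{T}^n$ of $w$ one has $\Delta w\le 0$ and $Dw=0$, so $\tfrac{\varepsilon^2}{2}|D^2u^{\varepsilon}|^2+D_xH(x_0,p)\cdot p\le 0$ at $x_0$. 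Combining $(\Delta u^{\varepsilon})^2\le n|D^2u^{\varepsilon}|^2$ with the PDE itself (which gives $\tfrac{\varepsilon^2}{2}\Delta u^{\varepsilon}=H-\overline{H}^{\varepsilon}$), one estimates $\tfrac12|H(x_0,p)|^2\le |\overline{H}^{\varepsilon}(P)|^2+\tfrac{n\varepsilon^4}{4}|D^2u^{\varepsilon}|^2$; absorbing the second term into $\tfrac{\varepsilon^2}{2}|D^2u^{\varepsilon}|^2$ for $\varepsilon$ sufficiently small produces
\begin{equation*}
\frac{1}{2}|H(x_0,p)|^2 + D_xH(x_0,p)\cdot p \le |\overline{H}^{\varepsilon}(P)|^2,
\end{equation*}
which is bounded uniformly in $\varepsilon$. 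Hypothesis (H3) then forces $|p|$ at $x_0$, and hence $\|Du^{\varepsilon}\|_\infty$, to be bounded independently of $\varepsilon$.

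With the uniform Lipschitz bound in hand and the normalisation $u^{\varepsilon}(0)=0$, the family $\{u^{\varepsilon}\}$ is equicontinuous and equibounded on $\mathbb{T}^n$; Arzelà--Ascoli extracts a subsequence along which $u^{\varepsilon}\to u$ uniformly and $\overline{H}^{\varepsilon}(P)\to\overline{H}(P)$. Standard stability of viscosity solutions under uniform convergence passes to the limit in \eqref{statHJeps}, the regularising term $\tfrac{\varepsilon^2}{2}\Delta u^{\varepsilon}$ disappearing in the viscosity sense, so that $u$ is a $\mathbb{Z}^n$-periodic viscosity solution of \eqref{statHJ}. The delicate point throughout is the Bernstein closure: (H3) is tailored precisely so that the combination $\tfrac12|H|^2+D_xH\cdot p$ arising at the gradient maximum is coercive in $|p|$, which is why (H3), rather than the weaker condition \eqref{coe}, is imposed.
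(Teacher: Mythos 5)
Your proposal is correct and follows essentially the same route as the paper: a discounted (small-$\delta$) approximation to construct $(\overline{H}^{\varepsilon},u^{\varepsilon})$, a Bernstein computation on $\tfrac12|Du|^2$ at an interior maximum, the identity $\tfrac{\varepsilon^2}{2}\Delta u = H - \overline{H}^{\varepsilon}$ combined with $(\Delta u)^2\le n|D^2u|^2$ to absorb the Laplacian into $|D^2u|^2$ and produce the combination $\tfrac12|H|^2 + D_xH\cdot p$, and finally (H3) to bound $|Du^{\varepsilon}|$ uniformly in $\varepsilon$. The one organizational difference is that the paper performs a single Bernstein computation on the discounted solution $v^\lambda$, obtaining a bound uniform in $\lambda$ and $\varepsilon$ simultaneously, whereas you perform it twice (an $\varepsilon$-dependent one to pass $\delta\to0$, and a second, $\varepsilon$-uniform one on $u^{\varepsilon}$ itself); this is a cosmetic choice and does not affect correctness.
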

We call \eqref{statHJeps} the \textit{stochastic cell problem}.
\begin{defn} \label{defL}
Let $\varepsilon >0$ and $P \in \mathbb{R}^n$.
The \textit{linearized operator} $L^{\varepsilon, P}: C^2(\mathbb{T}^n) \to C (\mathbb{T}^n)$ associated to equation \eqref{statHJeps}
is defined as
$$
L^{\varepsilon, P} v (x) :=  - \frac{\varepsilon^2}{2} \Delta v (x)
+ D_p H (x, P + D u^{\varepsilon} (x) ) \cdot D v (x),
$$
for every $v \in C^2(\mathbb{T}^n)$.
\end{defn}

\begin{proof}[Sketch of the Proof]
We mimic the proof in \cite{LPV}.
For every $\lambda > 0$, let's consider the following problem
$$
\lambda v^\lambda + H(x,P+Dv^\lambda) = \dfrac{\varepsilon^2}{2} \Delta v^\lambda.
$$
The above equation has a unique smooth solution $v^\lambda$ in $\mathbb{R}^n$ which is $\mathbb Z^n$-periodic.\\
We will prove that $\| \lambda v^\lambda \|_{L^\infty}, \| Dv^\lambda \|_{L^\infty} \le C$, 
for some positive constant $C$ independent on $\lambda$ and $\varepsilon$.
By using the viscosity property with $\varphi = 0$ as a test function,
we get $\|\lambda v^\lambda\|_{L^\infty} \le C$.
Let now $w^\lambda =\dfrac{ |Dv^\lambda|^2}{2}$.
Then we have
$$
2 \lambda w^\lambda + D_p H \cdot D w^\lambda + D_x H \cdot D v^\lambda 
= \dfrac{\varepsilon^2}{2} \Delta w^\lambda
- \dfrac{\varepsilon^2}{2} |D^2 v^\lambda|^2.
$$
Notice that  for  
$\varepsilon<1/\sqrt{n}$
$$
 \dfrac{\varepsilon^2}{2} |D^2 v^\lambda|^2 \ge  
 \dfrac{\varepsilon^4}{4} |\Delta v^\lambda|^2=(\lambda
v^\lambda +H)^2 \ge \dfrac{1}{2} H^2 - C.
$$
Therefore,
$$
2 \lambda w^\lambda + D_p H \cdot D w^\lambda + D_x H \cdot D v^\lambda 
+ \dfrac{1}{2}H^2 -C \le
\dfrac{\varepsilon^2}{2} \Delta w^\lambda.
$$
At $x_1 \in \mathbb T^n$ where $w^\lambda(x_1) = \max_{\mathbb T^n} w^\lambda$
$$
2 \lambda w^\lambda(x_1) + D_xH \cdot D v^\lambda(x_1) + \dfrac{1}{2} H^2 \le C.
$$
Since $w^\lambda(x_1) \ge 0$, using condition (H3) we deduce that 
$w^\lambda$ is bounded independently of $\lambda, \varepsilon$.
Finally, considering the limit $\lambda \to 0$ we conclude the proof.

\end{proof}

\begin{rem} \label{unifLip}
Bernstein method and (H3) were used in the proof to deduce the uniform bound
on $\|Dv^\lambda\|_{L^\infty}$, which is one of the key properties we need along 
our derivation.
See \cite[Appendix 1]{L} for conditions similar to (H3).

The classical theory (see \cite{L}) 
ensures that the functions $u^{\varepsilon} (\cdot, P)$ are $C^{\infty}$.
In addition, the previous proof shows that they are Lipschitz, 
with Lipschitz constant independent of $\varepsilon$.
\end{rem}

\end{section}

\begin{section}{Stochastic dynamics} \label{Sec3}

We now introduce a stochastic dynamics associated with the stochastic cell problem \eqref{statHJeps}.
This will be a perturbation to the Hamiltonian dynamics \eqref{HE},
which describes the trajectory in the phase space 
of a classical mechanical system. 

Let 
$(\mathbb{T}^n, \sigma, P)$
be a probability space, and let $w_t$ be a $n$-dimensional Brownian motion on 
$\mathbb{T}^n$.
Let $\varepsilon >0$, and let $u^{\varepsilon}$
be a $\mathbb{Z}^n$-periodic solution of \eqref{statHJeps}.
To simplify, we set $P = 0$. 
Consider now the solution $\mathbf{x}^\varepsilon(t)$ of 
\begin{equation} \label{stoc}
\begin{cases} 
d \mathbf{x}^{\varepsilon} = - D_p H ( \mathbf{x}^{\varepsilon} , 
D u^{\varepsilon} ( \mathbf{x}^{\varepsilon} ) ) \, dt + \varepsilon \, d w_t, \\
\mathbf{x}^{\varepsilon}(0) = \overline{x},
\end{cases}
\end{equation}
with $\overline{x} \in \mathbb{T}^n$ arbitrary.
Accordingly, the momentum variable is defined as
$$
\mathbf{p}^{\varepsilon} (t) = D u^{\varepsilon} (\mathbf{x}^{\varepsilon}(t)).
$$
\begin{rem} \label{pbdd}
From Remark \ref{unifLip} it follows that
$$
\sup_{t > 0} | \mathbf{p}^{\varepsilon} (t) | < \infty.
$$ 
\end{rem}
Let us now recall some basic fact of stochastic calculus.
Suppose $\mathbf{z} : [0, +\infty) \to \mathbb{R}^n$ is a solution to the SDE: 
$$
d \mathbf{z}_i = a_i \, dt + b_{ij} \, w^{j}_t \hspace{2cm}i=1,\ldots,n,
$$
with $a_{i}$ and $b_{ij}$ bounded and progressively measurable processes.
Let $\varphi: \mathbb{R}^n \times \mathbb{R} \to \mathbb{R}$
be a smooth function.
Then, $\varphi (\mathbf{z},t)$ satisfies the \textit{It\^o formula}:
\begin{equation} \label{Ito}
d \varphi = \varphi_{z_i} \, d z_i + \left( \varphi_t 
+ \frac{1}{2} b_{ij} b_{jk} \varphi_{z_i z_k} \right) dt.
\end{equation}
An integrated version of the It\^o formula is the \textit{Dynkin's formula}:
$$
E \left[ \phi (\mathbf{z}(T)) - \phi (\mathbf{z}(0)) \right] 
= E \left[ \int_0^T \left( a_i D_{z_i} \phi (\mathbf{z} (t)) + \frac{1}{2} b_{ij} b_{jk} D^2_{z_i z_k} \phi (\mathbf{z}(t)) \right) \, dt \right].
$$
Here and always in the sequel, we use Einstein's convention for repeated indices in a sum.
In the present situation, we have
$$
a_i = - D_{p_i} H ( \mathbf{x}^{\varepsilon} , D u^{\varepsilon} ) , \hspace*{1cm}
b_{ij} = \varepsilon \delta_{ij}.
$$
Hence, recalling \eqref{stoc} and \eqref{Ito}
\begin{align*}
d p_i &= u^{\varepsilon}_{x_i x_j} \, d x^{\varepsilon}_j 
+ \frac{\varepsilon^2}{2} \Delta ( u^{\varepsilon}_{x_i} ) \, dt 
= - L^{\varepsilon, P} u^{\varepsilon}_{x_i} dt 
+ \varepsilon u^{\varepsilon}_{x_i x_j} \, d w^j_t \\
&=  D_{x_i} H \, dt + \varepsilon u^{\varepsilon}_{x_i x_j} \, d w^j_t, 
\end{align*}
where in the last equality we used identity \eqref{dbeta}.
Thus, $(\mathbf{x}^{\varepsilon},\mathbf{p}^{\varepsilon})$ satisfies the following stochastic version
of the Hamiltonian dynamics \eqref{HE}:
\begin{equation} \label{StocHamDyn}
\begin{cases}
d \mathbf{x}^{\varepsilon} = - D_p H ( \mathbf{x}^{\varepsilon}, \mathbf{p}^{\varepsilon}) \, dt 
+ \varepsilon \, d w_t, \\
d \mathbf{p}^{\varepsilon} = D_x H (\mathbf{x}^{\varepsilon}, \mathbf{p}^{\varepsilon})\, dt 
+ \varepsilon D^2 u^{\varepsilon} \, d w_t.
\end{cases}
\end{equation}
We are now going to study the behavior of the solutions $u^{\varepsilon}$
of equation \eqref{statHJeps} along the trajectory $\mathbf{x}^{\varepsilon} (t)$.
Thanks to the It\^o formula and relations \eqref{StocHamDyn}
and \eqref{statHJeps}:
\begin{align}
d u^{\varepsilon} (\mathbf{x}^{\varepsilon}(t))
&= D u^{\varepsilon} d \mathbf{x}^{\varepsilon}  + \frac{\varepsilon^2}{2} \Delta u^{\varepsilon} \, dt
=  - L^{\varepsilon, P} u^{\varepsilon} dt
+ \varepsilon D u^{\varepsilon} \, d w_t \nonumber \\
&= \left( H - \overline{H}^{\varepsilon} - D u^{\varepsilon}\cdot D_p H \right) dt
+ \varepsilon D u^{\varepsilon} \, d w_t. \label{eqnu}
\end{align}
Using Dynkin's formula in \eqref{eqnu} we obtain
$$
E \big( u^{\varepsilon} (\mathbf{x}^{\varepsilon}(T)) - u^{\varepsilon} (\mathbf{x}^{\varepsilon}(0)) \big)
= E \left[ \int_0^T \left( H - \overline{H}^{\varepsilon} - D u^{\varepsilon} \cdot D_p H \right) dt \right].
$$
We observe that in the convex case, since the Lagrangian $L$ 
is related with the Hamiltonian by the relation
$$
L = p \cdot D_p H - H, 
$$
we have
$$
u^{\varepsilon} (\mathbf{x}^{\varepsilon}(0) ) 
=  E \left[ \int_0^T ( L + \overline{H}^{\varepsilon} ) \, dt 
+ u^{\varepsilon} (\mathbf{x}^{\varepsilon}(T)) \right].
$$

\begin{subsection}{Phase space measures} \label{PhSp}

We will encode the asymptotic behaviour of the trajectories 
by considering ergodic averages.
More precisely, we associate to every trajectory $( \mathbf{x}^{\varepsilon}(\cdot) , \mathbf{p}^{\varepsilon}(\cdot) )$
of \eqref{StocHamDyn} a probability measure $\mu^{\varepsilon} \in \mathcal{P} (\mathbb{T}^n \times \mathbb{R}^n)$ 
defined by
\begin{equation} \label{limT}
\displaystyle \int_{\mathbb{T}^n \times \mathbb{R}^n} \phi (x,p) \, d \mu^{\varepsilon} (x,p)
:= \lim_{T \to \infty} \frac{1}{T} \, E \left[ \int_0^T \phi (\mathbf{x}^{\varepsilon}(t),\mathbf{p}^{\varepsilon}(t)) \, dt \right],
\end{equation}
for every $\phi \in C_c ( \mathbb{T}^n \times \mathbb{R}^n )$.
In the expression above, the definition makes sense 
provided the limit is taken over an appropriate subsequence.
Moreover, no uniqueness is asserted, since by choosing 
a different subsequence one can in principle obtain a different limit measure $\mu^{\varepsilon}$.
Then, using Dynkin's formula we have, for every $\phi \in C^{2}_c (\mathbb{T}^n \times \mathbb{R}^n)$,
\begin{align}
&E \left[ \phi ( \mathbf{x}^{\varepsilon}(T), \mathbf{p}^{\varepsilon}(T)) - \phi (\mathbf{x}^{\varepsilon}(0), \mathbf{p}^{\varepsilon}(0)) \right]
= E \left[ \int_0^T \Big( D_p \phi \cdot D_x H - D_x \phi \cdot D_p H \Big) \, dt \right] \nonumber \\
&\hspace{1cm}+ E \left[ \int_0^T \Big(\frac{\varepsilon^2}{2} \phi_{x_i x_i} 
+ \varepsilon^2  u^{\varepsilon}_{x_i x_j} \phi_{x_i p_j} 
+ \frac{\varepsilon^2}{2} u^{\varepsilon}_{x_i x_k} 
u^{\varepsilon}_{x_i x_j}  \phi_{p_k p_j} \Big) \, dt \right]. \label{exp}
\end{align}
Dividing last relation by $T$ and passing to the limit as $T \to + \infty$ 
(along a suitable subsequence) we obtain
\begin{equation} \label{imp}
\int_{\mathbb{T}^n \times \mathbb{R}^n} \{  \phi , H \}  \, d \mu^{\varepsilon}
+ \int_{\mathbb{T}^n \times \mathbb{R}^n}  
\left[ \frac{\varepsilon^2}{2} \phi_{x_i x_i} 
+ \varepsilon^2 u^{\varepsilon}_{x_i x_j} \phi_{x_i p_j} 
+ \frac{\varepsilon^2}{2}  u^{\varepsilon}_{x_i x_k} 
u^{\varepsilon}_{x_i x_j} \phi_{p_k p_j}
\right] \, d \mu^{\varepsilon} = 0.
\end{equation}

\end{subsection}

\begin{subsection}{Projected measure} \label{Proj}

We define the \textit{projected measure} $\theta_{\mu^{\varepsilon}} \in \mathcal{P} (\mathbb{T}^n)$ 
in the following way:
$$
\int_{\mathbb{T}^n} \varphi (x) \, d \theta_{\mu^{\varepsilon}} (x)
:= \int_{\mathbb{T}^n \times \mathbb{R}^n} \varphi (x) \, d \mu^{\varepsilon} (x,p),
\hspace{2cm}\forall \, \varphi \in C (\mathbb{T}^n).
$$
Using test functions that do not depend 
on the variable $p$ in the previous definition
we conclude from identity \eqref{imp} that
\begin{equation} \label{po}
\int_{\mathbb{T}^n}  D_p H \cdot D \varphi \, \, d \theta_{\mu^{\varepsilon}} 
=  \frac{\varepsilon^2}{2} \int_{\mathbb{T}^n \times \mathbb{R}^n} \Delta \varphi 
\, d \theta_{\mu^{\varepsilon}} , \hspace{2cm}\forall \, \varphi \in C^2 (\mathbb{T}^n).
\end{equation}

\end{subsection}

\begin{subsection}{PDE Approach} \label{SecPDE}

The measures $\mu^{\varepsilon}$ and $\theta_{\mu^{\varepsilon}}$
can be defined also by using standard PDE methods from \eqref{po}.
Indeed, given $u^{\varepsilon}$ we can consider the PDE
$$
\frac{\varepsilon^2}{2} \Delta \theta^{\varepsilon}
+ \textnormal{div} \left( D_p H (x, D u^{\varepsilon}) \, \theta^{\varepsilon} \right) = 0,
$$
which admits a unique non-negative solution $\theta^{\varepsilon}$ with
$$
\int_{\mathbb{T}^n} \theta^{\varepsilon} (x) \, dx= 1,
$$
since it is not hard to see that $0$ is the principal eigenvalue of the 
following elliptic operator in $C^2 (\mathbb T^n)$:
$$
v \longmapsto -\frac{\varepsilon^2}{2} \Delta v-\text{div} ( D_p H (x, D
u^{\varepsilon}) \, v).
$$ 
Then $\mu^{\varepsilon}$ can be defined as a unique measure such that
$$
\int_{\mathbb{T}^n \times \mathbb{R}^n} \psi (x,p) \, d \mu^{\varepsilon} (x,p)
= \int_{\mathbb{T}^n} \psi (x, D u^{\varepsilon} (x)) \, d \theta^{\varepsilon} (x),
$$
for every $\psi \in C_c (\mathbb T^n \times \mathbb R^n)$.
Finally, identity \eqref{imp} requires some work but can also be proved in a purely analytic way.
\end{subsection}

\end{section}

\begin{section}{Uniform estimates} \label{Est}

In this section we derive several estimates that will be useful when passing to the limit as $\varepsilon \to 0$.
We will use here the same techniques as in \cite{Ev} and \cite{Tr}.

\begin{prop} \label{propest}
We have the following estimates:
\begin{align}
&\varepsilon^2 \int_{\mathbb{T}^n} | D^2_{xx} u^{\varepsilon} |^2 \, d \theta_{\mu^{\varepsilon}} \leq C,
\label{est} \\
&\varepsilon^2 \int_{\mathbb{T}^n} |D^2_{P x} u^{\varepsilon} |^2 \, d \theta_{\mu^{\varepsilon}} 
\leq \int_{\mathbb{T}^n} | D_P u^{\varepsilon} |^2 \, d \theta_{\mu^{\varepsilon}} 
+ \int_{\mathbb{T}^n} | D_p H - D_P \overline{H}^{\varepsilon} |^2 \, d \theta_{\mu^{\varepsilon}}, \label{est2} \\
& \varepsilon^2 \int_{\mathbb{T}^n} |Du^{\varepsilon}_{x_i x_i}|^2 \, d \theta_{\mu^{\varepsilon}} 
\leq C \left(1 +  \int_{\mathbb{T}^n} |D^2_{xx} u^{\varepsilon} |^3 \, d \theta_{\mu^{\varepsilon}} \right),
 \qquad i=1,\ldots,n. \label{est3}
\end{align}
In addition, if $H$ is uniformly convex in $p$, inequalities \eqref{est} and \eqref{est2} can be improved to:
\begin{align} 
&\int_{\mathbb{T}^n}  |D^2_{xx} u^{\varepsilon}|^2 \, d \theta_{\mu^{\varepsilon}} \leq C, \label{unifcx} \\
&\int_{\mathbb{T}^n} |D^2_{P x} u^{\varepsilon} |^2 \, d \theta_{\mu^{\varepsilon}}
\leq C \, \textnormal{trace} \, ( D^2_{PP} \overline{H}^{\varepsilon} ), \label{unifcx2} 
\end{align}
respectively.
Here $C$ denotes a positive constant independent of $\varepsilon$.
\end{prop}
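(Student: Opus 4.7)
The cornerstone of the argument is the observation that the projected measure $\theta_{\mu^{\varepsilon}}$ is invariant for the adjoint of the linearized operator $L^{\varepsilon,P}$: from \eqref{po} we immediately get
$$
\int_{\mathbb{T}^n} L^{\varepsilon,P}\varphi \, d\theta_{\mu^\varepsilon} = \int_{\mathbb{T}^n}\Big(-\frac{\varepsilon^2}{2}\Delta\varphi + D_pH\cdot D\varphi\Big)\, d\theta_{\mu^\varepsilon}=0 \qquad \forall\,\varphi\in C^{2}(\mathbb{T}^n).
$$
All the estimates will follow by differentiating the cell problem \eqref{statHJeps} an appropriate number of times in $x$ or $P$, producing an equation of the form $L^{\varepsilon,P}v = (\text{known source})$, and integrating an appropriate multiplier against $d\theta_{\mu^\varepsilon}$ so that the $L^{\varepsilon,P}$ contribution drops out.

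\textbf{Estimate \eqref{est}.} The plan is to work with $w=\tfrac{1}{2}|Du^\varepsilon|^2$ directly. Differentiating the cell problem in $x_i$ and multiplying by $u^\varepsilon_{x_i}$ (summing in $i$) yields, after an easy algebraic cancellation exploiting $\Delta(v^2/2)=v\Delta v+|Dv|^2$, the pointwise identity
$$
L^{\varepsilon,P}w = -\tfrac{\varepsilon^2}{2}|D^2_{xx}u^\varepsilon|^2 - Du^\varepsilon\cdot D_xH.
$$
Integrating against $d\theta_{\mu^\varepsilon}$ makes the left-hand side vanish, and $|Du^\varepsilon|$ and $|D_xH(x,P+Du^\varepsilon)|$ are bounded uniformly in $\varepsilon$ by Remark~\ref{unifLip} together with (H1)–(H2), giving \eqref{est}.

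\textbf{Estimate \eqref{est2}.} Differentiating \eqref{statHJeps} in $P_k$ yields $L^{\varepsilon,P}u^\varepsilon_{P_k}=\overline{H}^\varepsilon_{P_k}-D_{p_k}H$. Multiplying by $u^\varepsilon_{P_k}$ and using the identity $\int u^\varepsilon_{P_k} L^{\varepsilon,P}u^\varepsilon_{P_k}\, d\theta_{\mu^\varepsilon}=\tfrac{\varepsilon^2}{2}\int|Du^\varepsilon_{P_k}|^2\,d\theta_{\mu^\varepsilon}$ (again a consequence of the invariant measure property applied to $\tfrac{1}{2}(u^\varepsilon_{P_k})^2$), summing in $k$ and applying Young's inequality $2ab\le a^2+b^2$ gives \eqref{est2}.

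\textbf{Estimate \eqref{est3}.} Here the plan is to differentiate \eqref{statHJeps} twice in $x_i$ (no sum), producing
$$
L^{\varepsilon,P}u^\varepsilon_{x_ix_i} = -H_{x_ix_i}-2H_{x_ip_k}u^\varepsilon_{x_kx_i} - H_{p_kp_j}u^\varepsilon_{x_kx_i}u^\varepsilon_{x_jx_i}.
$$
Multiplying by $u^\varepsilon_{x_ix_i}$, integrating against $d\theta_{\mu^\varepsilon}$, and again using the invariant-measure trick to replace the left-hand side by $\tfrac{\varepsilon^2}{2}\int|Du^\varepsilon_{x_ix_i}|^2\,d\theta_{\mu^\varepsilon}$, the right-hand side is controlled by a constant plus $C\int|D^2_{xx}u^\varepsilon|^3\,d\theta_{\mu^\varepsilon}$ (the lower-order linear and quadratic terms are absorbed via Young). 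This yields \eqref{est3}; the cubic term is the genuine obstacle in the non-convex case and is the reason \eqref{est3} cannot be bootstrapped into a pointwise bound on $\int|D^2_{xx}u^\varepsilon|^2\,d\theta_{\mu^\varepsilon}$ without the $\varepsilon^2$.

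\textbf{Improved estimates \eqref{unifcx}–\eqref{unifcx2} under uniform convexity.} For \eqref{unifcx}, one applies $\Delta$ to \eqref{statHJeps}, so that the equation for $\Delta u^\varepsilon$ reads
$$
L^{\varepsilon,P}(\Delta u^\varepsilon) = -\Delta_x H - 2H_{x_ip_k}u^\varepsilon_{x_ix_k} - H_{p_kp_j}\sum_i u^\varepsilon_{x_kx_i}u^\varepsilon_{x_jx_i}.
$$
Integrating against $d\theta_{\mu^\varepsilon}$ kills the left-hand side, and the quadratic form on the right is bounded below by $\gamma|D^2_{xx}u^\varepsilon|^2$ thanks to uniform convexity $D^2_{pp}H\ge \gamma I$; the linear cross term is absorbed by Young with a small parameter, giving \eqref{unifcx} without a factor of $\varepsilon^2$. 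For \eqref{unifcx2} one differentiates \eqref{statHJeps} twice in $P$ (no multiplier needed): after careful bookkeeping of the chain rule, the equation for $\mathrm{trace}(D^2_{PP}u^\varepsilon)$ reads
$$
L^{\varepsilon,P}\bigl(\mathrm{trace}(D^2_{PP}u^\varepsilon)\bigr)= \mathrm{trace}(D^2_{PP}\overline{H}^\varepsilon) - \sum_k H_{p_lp_m}(\delta_{lk}+u^\varepsilon_{x_lP_k})(\delta_{mk}+u^\varepsilon_{x_mP_k}),
$$
and integrating against $d\theta_{\mu^\varepsilon}$ together with uniform convexity of $H$ in $p$ controls $|I+D^2_{xP}u^\varepsilon|^2$, and in turn $|D^2_{xP}u^\varepsilon|^2$, by $\mathrm{trace}(D^2_{PP}\overline{H}^\varepsilon)$ up to absorbable constants. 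The main obstacle throughout is keeping track of the chain-rule terms so that the adjoint/invariance identity for $\theta_{\mu^\varepsilon}$ is applicable; convexity is what turns the sign of the leading term and eliminates the need for the $\varepsilon^2$ weight.
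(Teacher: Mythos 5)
Your proof is correct and takes essentially the same route as the paper: in each case you differentiate the regularized cell problem in $x$ or $P$, use the fact that $\int_{\mathbb{T}^n} L^{\varepsilon,P}\varphi\,d\theta_{\mu^\varepsilon}=0$ (and its consequence $\int v\,L^{\varepsilon,P}v\,d\theta_{\mu^\varepsilon}=\tfrac{\varepsilon^2}{2}\int|Dv|^2\,d\theta_{\mu^\varepsilon}$), and then invoke the uniform Lipschitz bound plus Cauchy--Young, with uniform convexity of $D^2_{pp}H$ supplying the sign that removes the $\varepsilon^2$ weight in \eqref{unifcx}--\eqref{unifcx2}. The only difference is organizational: the paper isolates the three identities as a separate lemma with a unified $\beta$-parameter (a spatial direction or a $P$-component), whereas you re-derive the needed identity in each case, but the estimates and their mechanism are identical.
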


\begin{rem}
Estimate \eqref{unifcx} was already proven in \cite{Ev} and \cite{Tr}.
\end{rem}

To prove the proposition we first need an auxiliary lemma.
In the following, we denote by $\beta$ either a direction in $\mathbb{R}^n$
(i.e. $\beta \in \mathbb{R}^n$ with $|\beta|=1$), or a parameter 
(e.g. $\beta = P_i$ for some $i \in \{ 1, \ldots, n\}$). 
When $\beta = P_i$ for some $i \in \{ 1, \ldots, n\}$ 
the symbols $H_{\beta}$ and $H_{\beta \beta}$ have to be understood as $H_{p_i}$
and $H_{p_i p_i}$, respectively.

\begin{lem}
We have
\begin{align}
&\varepsilon^2 \int_{\mathbb{T}^n} | D_x u^{\varepsilon}_{\beta} |^2 \, d \theta_{\mu^{\varepsilon}}
= 2 \int_{\mathbb{T}^n} u^{\varepsilon}_{\beta} (  \overline{H}^{\varepsilon}_{\beta} - H_{\beta} ) \, d \theta_{\mu^{\varepsilon}}, \label{estim}\\
&\int_{\mathbb{T}^n} ( \overline{H}^{\varepsilon}_{\beta \beta}
- H_{\beta \beta}  - 2 D_{p} H_{\beta} \cdot D_x u^{\varepsilon}_{\beta}
- D^2_{p p} H D_x u^{\varepsilon}_{\beta} \cdot D_x u^{\varepsilon}_{\beta} ) \, d \theta_{\mu^{\varepsilon}}
= 0, \label{estim12}\\
&\varepsilon^2 \int_{\mathbb{T}^n} | D_x u^{\varepsilon}_{\beta \beta} |^2 \, d \theta_{\mu^{\varepsilon}} 
= 2 \int_{\mathbb{T}^n} u^{\varepsilon}_{\beta \beta} ( \overline{H}^{\varepsilon}_{\beta \beta}
- H_{\beta \beta}  - 2 D_{p} H_{\beta} \cdot D_x u^{\varepsilon}_{\beta}
- D^2_{p p} H : D_x u^{\varepsilon}_{\beta} \otimes D_x u^{\varepsilon}_{\beta} \, d
\theta_{\mu^{\varepsilon}}. \label{estim2}
\end{align}
\end{lem}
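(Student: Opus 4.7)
My plan is to derive all three identities from a single source: the equations satisfied by $u^\varepsilon_\beta$ and $u^\varepsilon_{\beta\beta}$ obtained by differentiating the stochastic cell problem \eqref{statHJeps}, combined with the identity \eqref{po}, which says precisely that $\theta_{\mu^\varepsilon}$ is invariant under the linearized operator $L^{\varepsilon,P}$, i.e.
\[
\int_{\mathbb{T}^n} L^{\varepsilon,P} \varphi \, d\theta_{\mu^\varepsilon} = 0 \qquad \text{for every } \varphi \in C^2(\mathbb{T}^n).
\]
From this invariance, applied to $\varphi = w^2/2$ and using $\Delta(w^2/2) = w\Delta w + |Dw|^2$, one obtains the key ``energy identity''
\[
\frac{\varepsilon^2}{2}\int_{\mathbb{T}^n} |Dw|^2 \, d\theta_{\mu^\varepsilon} = \int_{\mathbb{T}^n} w \, L^{\varepsilon,P} w \, d\theta_{\mu^\varepsilon},
\qquad w \in C^2(\mathbb{T}^n),
\]
which will be the workhorse for \eqref{estim} and \eqref{estim2}.

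First, I would differentiate \eqref{statHJeps} once with respect to $\beta$ (treating the two cases $\beta \in \R^n$ and $\beta = P_i$ uniformly, since in both cases $p = P + Du^\varepsilon(x)$ and the chain rule produces the same form). The result is
\[
L^{\varepsilon,P} u^\varepsilon_\beta = \overline{H}^\varepsilon_\beta - H_\beta,
\]
which is the identity \eqref{dbeta} already referenced earlier in the excerpt. Substituting $w = u^\varepsilon_\beta$ into the energy identity then yields \eqref{estim} immediately.

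For \eqref{estim12} and \eqref{estim2} I would differentiate the equation $L^{\varepsilon,P} u^\varepsilon_\beta = \overline{H}^\varepsilon_\beta - H_\beta$ one more time in $\beta$. The chain rule, applied to both $H_\beta(x, P+Du^\varepsilon(x))$ and to the coefficient $D_p H(x, P+Du^\varepsilon(x))$ appearing inside $L^{\varepsilon,P}$, produces two ``extra'' contributions: from the first term $H_{\beta\beta} + D_p H_\beta \cdot D_x u^\varepsilon_\beta$, and from the second (after commuting the $\beta$-derivative past $D$) an additional $D_p H_\beta \cdot D_x u^\varepsilon_\beta + D^2_{pp} H\, D_x u^\varepsilon_\beta \cdot D_x u^\varepsilon_\beta$. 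Collecting everything one gets
\[
L^{\varepsilon,P} u^\varepsilon_{\beta\beta}
= \overline{H}^\varepsilon_{\beta\beta} - H_{\beta\beta}
- 2\, D_p H_\beta \cdot D_x u^\varepsilon_\beta
- D^2_{pp} H\, D_x u^\varepsilon_\beta \cdot D_x u^\varepsilon_\beta.
\]
Integrating this against $d\theta_{\mu^\varepsilon}$ and using the invariance (the left-hand side integrates to zero) gives \eqref{estim12}. Multiplying instead by $u^\varepsilon_{\beta\beta}$ and integrating, then applying the energy identity with $w = u^\varepsilon_{\beta\beta}$, produces \eqref{estim2}.

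The only point requiring care, and the step I expect to be mildly error-prone, is the second differentiation: one must consistently interpret $H_\beta$, $H_{\beta\beta}$, $D_p H_\beta$ and $D^2_{pp} H$ as partial derivatives of $H(x,p)$ \emph{evaluated at} $p = P + Du^\varepsilon(x)$, and keep track of the ``implicit'' contributions arising because $p$ depends on $\beta$ through $Du^\varepsilon$. Once the derivatives of $u^\varepsilon_\beta$ and the $D_p H$ coefficient of $L^{\varepsilon,P}$ are expanded correctly, the factor of $2$ in front of $D_p H_\beta \cdot D_x u^\varepsilon_\beta$ appears naturally from the sum of these two contributions, and the remaining terms match the statement verbatim.
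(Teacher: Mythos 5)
Your proposal is correct and follows essentially the same route as the paper: differentiate the regularized cell problem once and twice in $\beta$ to obtain $L^{\varepsilon,P}u^\varepsilon_\beta$ and $L^{\varepsilon,P}u^\varepsilon_{\beta\beta}$, then use the invariance \eqref{po} of $\theta_{\mu^\varepsilon}$, applied both directly and to $|u^\varepsilon_\beta|^2/2$ and $|u^\varepsilon_{\beta\beta}|^2/2$. Your ``energy identity'' is exactly the paper's computation of $\tfrac12 L^{\varepsilon,P}(|w|^2) = w\,L^{\varepsilon,P}w - \tfrac{\varepsilon^2}{2}|D_x w|^2$, just packaged as a reusable lemma, and your accounting of the chain-rule contributions (producing the factor of $2$ in front of $D_pH_\beta\cdot D_x u^\varepsilon_\beta$) matches \eqref{d2beta}.
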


\begin{proof}
By differentiating equation \eqref{statHJeps} with respect to $\beta$ and recalling Definition \ref{defL} we get
\begin{equation} \label{dbeta}
L^{\varepsilon, P} u^{\varepsilon}_{\beta} = \overline{H}^{\varepsilon}_{\beta} - H_{\beta},
\end{equation}
so that
\begin{align*}
\frac{1}{2} L^{\varepsilon, P} ( |u^{\varepsilon}_{\beta}|^2)  = u^{\varepsilon}_{\beta} L^{\varepsilon, P} u^{\varepsilon}_{\beta}
- \frac{\varepsilon^2}{2} | D_x u^{\varepsilon}_{\beta} |^2
= u^{\varepsilon}_{\beta} (  \overline{H}^{\varepsilon}_{\beta} - H_{\beta} )
- \frac{\varepsilon^2}{2} | D_x u^{\varepsilon}_{\beta} |^2.
\end{align*}
Integrating w.r.t. $\theta_{\mu^{\varepsilon}}$ and recalling \eqref{po} we get \eqref{estim}.

To prove \eqref{estim12}, we differentiate \eqref{dbeta} w.r.t. $\beta$ obtaining
\begin{equation} \label{d2beta}
L^{\varepsilon, P} u^{\varepsilon}_{\beta \beta} 
= \overline{H}^{\varepsilon}_{\beta \beta}
- H_{\beta \beta}  - 2 D_{p} H_{\beta} \cdot D_x u^{\varepsilon}_{\beta}
- D^2_{p p} H : D_x u^{\varepsilon}_{\beta} \otimes D_x u^{\varepsilon}_{\beta}.
\end{equation}
Integrating w.r.t. $\theta_{\mu^{\varepsilon}}$ and recalling \eqref{po} equality \eqref{estim12} follows.
Finally, using \eqref{d2beta}
\begin{align*}
&\frac{1}{2} L^{\varepsilon, P} ( |u^{\varepsilon}_{\beta \beta}|^2)  
= u^{\varepsilon}_{\beta \beta} L^{\varepsilon, P} u^{\varepsilon}_{\beta \beta}
- \frac{\varepsilon^2}{2} | D_x u^{\varepsilon}_{\beta \beta} |^2 \\
&\hspace{.5cm}= u^{\varepsilon}_{\beta \beta} 
( \overline{H}^{\varepsilon}_{\beta \beta}
- H_{\beta \beta}  - 2 D_{p} H_{\beta} \cdot D_x u^{\varepsilon}_{\beta}
- D^2_{p p} H : D_x u^{\varepsilon}_{\beta} \otimes D_x u^{\varepsilon}_{\beta} )
- \frac{\varepsilon^2}{2} | D_x u^{\varepsilon}_{\beta \beta} |^2.
\end{align*}
Once again, we integrate w.r.t. $\theta_{\mu^{\varepsilon}}$ and use \eqref{po}
to get \eqref{estim2}.
\end{proof}
We can now proceed to the proof of Proposition \ref{propest}.

\begin{proof}[Proof of Proposition \ref{propest}]

Summing up the $n$ identities obtained from \eqref{estim} 
with $\beta = x_1,\ldots, x_n$ respectively, we have
$$
\varepsilon^2 \int_{\mathbb{T}^n} | D^2_{xx} u^{\varepsilon} |^2 \, d \theta_{\mu^{\varepsilon}}
= -2 \int_{\mathbb{T}^n} D_x u^{\varepsilon} \cdot D_x H \, d \theta_{\mu^{\varepsilon}}.
$$
Thanks to Remark \ref{unifLip}, \eqref{est} follows.
Analogously, relation \eqref{est2} is obtained by summing up \eqref{estim} 
with $\beta = P_1, P_2, \ldots, P_n$, which yields
$$
\varepsilon^2  \int_{\mathbb{T}^n} |D^2_{P x} u^{\varepsilon} |^2 \, d \theta_{\mu^{\varepsilon}} 
= 2 \int_{\mathbb{T}^n} D_P u^{\varepsilon} \cdot 
\left[ D_P \overline{H}^{\varepsilon} - D_p H \right] \, d \theta_{\mu^{\varepsilon}}.
$$
Let us show \eqref{est3}.
Thanks to \eqref{estim2}
\begin{align*} 
&\varepsilon^2 \int_{\mathbb{T}^n} | D_x u^{\varepsilon}_{x_i x_i} |^2 \, d \theta_{\mu^{\varepsilon}}
\nonumber \\
&\hspace{.5cm}= - 2 \int_{\mathbb{T}^n} u^{\varepsilon}_{x_i x_i} 
( H_{x_i x_i}  + 2 D_{p} H_{x_i} \cdot D_x u^{\varepsilon}_{x_i}
+ D^2_{p p} H : D_x u^{\varepsilon}_{x_i} \otimes D_x u^{\varepsilon}_{x_i} ) 
\, d \theta_{\mu^{\varepsilon}}. 
\end{align*}
Since the functions $u^{\varepsilon}$ are uniformly Lipschitz, we have
$$
| H_{x_i x_i} |, \, | D_{p} H_{x_i} |, \,  |D^2_{p p} H| \leq C,
\quad \text{ on the support of }\theta_{\mu^{\varepsilon}}.
$$
Hence, 
\begin{align*}
\varepsilon^2 \int_{\mathbb{T}^n} | D_x u^{\varepsilon}_{x_i x_i} |^2 \, d \theta_{\mu^{\varepsilon}}
&\leq C \left[  \int_{\mathbb{T}^n} |D^2_{xx} u^{\varepsilon} | \, d \theta_{\mu^{\varepsilon}} 
+  \int_{\mathbb{T}^n} |D^2_{xx} u^{\varepsilon} |^2 \, d \theta_{\mu^{\varepsilon}} 
+  \int_{\mathbb{T}^n} |D^2_{xx} u^{\varepsilon} |^3 \, d \theta_{\mu^{\varepsilon}}  \right] \\
&\leq C \left(1 +  \int_{\mathbb{T}^n} |D^2_{xx} u^{\varepsilon} |^3 \, d \theta_{\mu^{\varepsilon}} \right).
\end{align*}
Finally, assume that $H$ is uniformly convex.
Thanks to \eqref{estim12} for every $i=1,\ldots,n$
\begin{align*}
0 &= \int_{\mathbb{T}^n}  ( H_{x_i x_i}  + 2 D_{p} H_{x_i} \cdot D_x u^{\varepsilon}_{x_i}
+ D^2_{p p} H D_x u^{\varepsilon}_{x_i} \cdot D_x u^{\varepsilon}_{x_i} ) \, d \theta_{\mu^{\varepsilon}} \\
&\geq \int_{\mathbb{T}^n}  \left(  H_{x_i x_i} + 2 D_{p} H_{x_i} \cdot D_x u^{\varepsilon}_{x_i}  \right) \,
d \theta_{\mu^{\varepsilon}}
+ \alpha \| D_x u^{\varepsilon}_{x_i}  \|^2_{L^2(\mathbb{T}^n;d \theta_{\mu^{\varepsilon}})},
\end{align*}
for some $\alpha > 0$.
Thus, using Cauchy's and Young's inequalities, for every $\eta \in \mathbb{R}$
\begin{align*}
&\alpha \| D_x u^{\varepsilon}_{x_i}  \|^2_{L^2(\mathbb{T}^n;d \theta_{\mu^{\varepsilon}})} 
\leq - \int_{\mathbb{T}^n} H_{x_i x_i} \, d \theta_{\mu^{\varepsilon}}
+ 2 \| D_{p} H_{x_i}  \|_{L^2(\mathbb{T}^n;d \theta_{\mu^{\varepsilon}})} 
\| D_x u^{\varepsilon}_{x_i}  \|_{L^2(\mathbb{T}^n;d \theta_{\mu^{\varepsilon}})} \\
&\hspace{.5cm}\leq - \int_{\mathbb{T}^n} H_{x_i x_i} \, d \theta_{\mu^{\varepsilon}}
+ \frac{1}{\eta^2} \| D_{p} H_{x_i}  \|^2_{L^2(\mathbb{T}^n;d \theta_{\mu^{\varepsilon}})} 
+ \eta^2 \| D_x u^{\varepsilon}_{x_i}  \|^2_{L^2(\mathbb{T}^n;d \theta_{\mu^{\varepsilon}})} .
\end{align*}
Finally, 
$$
( \alpha - \eta^2) \| D_x u^{\varepsilon}_{x_i}  \|^2_{L^2(\mathbb{T}^n;d \theta_{\mu^{\varepsilon}})} 
\leq - \int_{\mathbb{T}^n} H_{x_i x_i} \, d \theta_{\mu^{\varepsilon}}
+ \frac{1}{\eta^2} \| D_{p} H_{x_i}  \|^2_{L^2(\mathbb{T}^n;d \theta_{\mu^{\varepsilon}})} .
$$
Choosing $\eta^2 < \alpha$ we get \eqref{unifcx}.

Let $i \in \{ 1, \ldots, n \}$ and let us integrate w.r.t. $\theta_{\mu^{\varepsilon}}$
relation \eqref{d2beta} with $\beta = P_i$:
$$
0 = \int_{\mathbb{T}^n} ( \overline{H}^{\varepsilon}_{P_i P_i}
- H_{p_i p_i}  - 2 D_{p} H_{p_i} \cdot D_x u^{\varepsilon}_{P_i}
- D^2_{p p} H D_x u^{\varepsilon}_{P_i} \cdot D_x u^{\varepsilon}_{P_i} ) \, d \theta_{\mu^{\varepsilon}}.
$$
Since $D^2_{pp} H$ is positive definite, 
\begin{align*}
& \alpha \int_{\mathbb{T}^n} | D_x u^{\varepsilon}_{P_i} |^2 \, d \theta_{\mu^{\varepsilon}}
\leq \int_{\mathbb{T}^n} ( \overline{H}^{\varepsilon}_{P_i P_i}
- H_{p_i p_i}  - 2 D_{p} H_{p_i} \cdot D_x u^{\varepsilon}_{P_i} ) \, d \theta_{\mu^{\varepsilon}} \\
&\hspace{.5cm} \leq \int_{\mathbb{T}^n} ( \overline{H}^{\varepsilon}_{P_i P_i}
- 2 D_{p} H_{p_i} \cdot D_x u^{\varepsilon}_{P_i} ) \, d \theta_{\mu^{\varepsilon}}.
\end{align*}
Using once again Cauchy's and Young's inequalities and summing up 
with respect to $i = 1,\ldots,n$ \eqref{unifcx2} follows.
\end{proof}

\end{section}

\begin{section}{Existence of Mather measures and dissipation measures}

We now look at the asymptotic behavior of the measures $\mu^{\varepsilon}$ as $\varepsilon \to 0$,
proving existence of Mather measures.
The main result of the section is the following.
\begin{thm} \label{exist2}
Let $H: \mathbb{T}^n \times \mathbb{R}^n \to \mathbb{R}$ 
be a smooth function satisfying conditions (H1)--(H3),
and let $\{ \mu^{\varepsilon} \}_{\varepsilon > 0}$ 
be the family of measures defined in Section \ref{Sec3}.
Then there exist a Mather measure $\mu$ and a nonnegative, 
symmetric $n \times n$ matrix $( m_{kj} )_{k,j=1,\ldots n}$ of Borel measures such that
\begin{equation} \label{weakconv}\\
\mu^{\varepsilon} \stackrel{*}{\rightharpoonup} \mu \quad
\text{ in the sense of measures up to subsequences},
\end{equation}
and
\begin{equation} \label{veryimp}
\int_{\mathbb{T}^n \times \mathbb{R}^n} \{  \phi , H \}  \, d \mu
+ \int_{\mathbb{T}^n \times \mathbb{R}^n}  
\phi_{p_k p_j} \, d m_{kj} = 0, \hspace{1cm}\forall \, \phi \in C^2_c (\mathbb{T}^n \times \mathbb{R}^n).
\end{equation} 
Moreover, 
\begin{equation} \label{suppcompact}
\textnormal{supp} \, \mu \text{ and }\textnormal{supp} \, m \text{ are compact}. 
\end{equation}
We call the matrix $m_{kj}$ the \textit{dissipation measure}.
\end{thm}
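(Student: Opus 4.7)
The plan is a compactness-and-limit argument: extract $\mu$ and $(m_{kj})$ as weak-$*$ limits of $\mu^\varepsilon$ and of suitable second-order moments of $D_x^2 u^\varepsilon$ against $\mu^\varepsilon$, pass to the limit in \eqref{imp} to obtain \eqref{veryimp}, and finally verify properties (a)--(c) from the definition of Mather measure.

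First I would establish the compactness. By Remark \ref{unifLip}, $R := \sup_\varepsilon \|Du^\varepsilon\|_{L^\infty} < \infty$, so each $\mu^\varepsilon$ is concentrated on the graph $\{(x, Du^\varepsilon(x)) : x \in \mathbb{T}^n\} \subset K := \mathbb{T}^n \times \overline{B_R(0)}$; Banach--Alaoglu yields $\mu^\varepsilon \stackrel{*}{\rightharpoonup} \mu$ along a subsequence, with $\textnormal{supp}\,\mu \subset K$. For each $k,j$ define the Borel measure $m_{kj}^\varepsilon$ by
$$
\int \psi\, dm_{kj}^\varepsilon := \frac{\varepsilon^2}{2} \sum_i \int u^\varepsilon_{x_i x_k}(x)\, u^\varepsilon_{x_i x_j}(x)\, \psi(x,p)\, d\mu^\varepsilon(x,p), \qquad \psi \in C_c(\mathbb{T}^n \times \mathbb{R}^n).
$$
Estimate \eqref{est} combined with $2|ab| \leq a^2+b^2$ gives $\|m_{kj}^\varepsilon\|_{TV} \leq C$ uniformly in $\varepsilon$, with support in $K$. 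A further subsequence yields $m_{kj}^\varepsilon \stackrel{*}{\rightharpoonup} m_{kj}$; the identity
$$
\sum_{k,j} \xi_k \xi_j \int \psi\, dm_{kj}^\varepsilon = \frac{\varepsilon^2}{2} \int \sum_i \Bigl(\sum_k \xi_k u^\varepsilon_{x_i x_k}\Bigr)^{\!2} \psi\, d\mu^\varepsilon \geq 0 \qquad (\xi \in \mathbb{R}^n,\ \psi \geq 0)
$$
passes to the limit, so $(m_{kj})$ is nonnegative, symmetric, and compactly supported.

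Next I would pass to the limit in \eqref{imp} for fixed $\phi \in C_c^2$. The Poisson bracket term converges since $\{\phi,H\} \in C_b(K)$; the term $\frac{\varepsilon^2}{2}\int \phi_{x_i x_i}\,d\mu^\varepsilon$ is $O(\varepsilon^2)$; by Cauchy--Schwarz and \eqref{est},
$$
\Bigl|\varepsilon^2 \int u^\varepsilon_{x_i x_j} \phi_{x_i p_j}\, d\mu^\varepsilon\Bigr| \leq \varepsilon \Bigl(\varepsilon^2 \int |D_x^2 u^\varepsilon|^2\, d\theta_{\mu^\varepsilon}\Bigr)^{\!1/2} \|D_x D_p \phi\|_\infty = O(\varepsilon);
$$
and the main term converges by construction to $\int \phi_{p_k p_j}\,dm_{kj}$. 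This yields \eqref{veryimp}. For the Mather properties (with $P=0$ by the normalization in \eqref{stoc}), property (c) follows from passing to the limit in \eqref{po}, whose RHS is $O(\varepsilon^2)$ while the LHS converges by $D_p H \cdot D\varphi \in C_b(K)$. For (a), on $\textnormal{supp}\,\mu^\varepsilon$ the cell problem \eqref{statHJeps} gives $H - \overline{H}^\varepsilon = \frac{\varepsilon^2}{2}\Delta u^\varepsilon$; using $(\Delta u^\varepsilon)^2 \leq n |D_x^2 u^\varepsilon|^2$ and \eqref{est},
$$
\int \bigl(H-\overline{H}(0)\bigr)^2 \, d\mu^\varepsilon \leq 2\bigl(\overline{H}^\varepsilon-\overline{H}(0)\bigr)^2 + \frac{n C \varepsilon^2}{2} \to 0
$$
by Theorem \ref{appr}, so the weak-$*$ limit satisfies $\int (H-\overline{H}(0))^2 \, d\mu = 0$, i.e. $H = \overline{H}(0)$ $\mu$-a.e. and $\int H\, d\mu = \overline{H}(0)$. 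Property (b) follows by applying \eqref{po} to $\varphi = u^\varepsilon \in C^\infty(\mathbb{T}^n)$, giving $\int p \cdot D_p H\, d\mu^\varepsilon = \int D u^\varepsilon \cdot D_p H\, d\theta_{\mu^\varepsilon} = \frac{\varepsilon^2}{2}\int \Delta u^\varepsilon\, d\theta_{\mu^\varepsilon} = O(\varepsilon)$, and then invoking continuity of $p\cdot D_pH$ on $K$ to pass to the limit.

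The main obstacle is the pointwise form of (a): $H$ must equal the constant $\overline{H}(0)$ $\mu$-almost everywhere, not merely in average. The argument rests on the fact that each $\mu^\varepsilon$ is concentrated exactly on the graph of $Du^\varepsilon$, on which the cell problem pins $H$ to $\overline{H}^\varepsilon$ modulo $\frac{\varepsilon^2}{2}\Delta u^\varepsilon$; the $L^2(\theta_{\mu^\varepsilon})$-bound on $D_x^2 u^\varepsilon$ coming from \eqref{est} is exactly the control needed to kill this defect in the $L^2(\mu^\varepsilon)$-norm of $H - \overline{H}(0)$, which then transfers to the weak-$*$ limit.
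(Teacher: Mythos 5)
Your proof is correct and follows essentially the same strategy as the paper: uniform Lipschitz bound gives a common compact support $K$ for the $\mu^\varepsilon$, weak-$*$ compactness and estimate \eqref{est} yield the limit measures $\mu$ and $(m_{kj})$, passing to the limit in \eqref{imp} gives \eqref{veryimp}, and properties (a)--(c) are checked by the same $L^2(\theta_{\mu^\varepsilon})$-estimates on $D^2_{xx}u^\varepsilon$. The only cosmetic differences are that you make the construction of $m_{kj}^\varepsilon$ and verification of nonnegativity explicit, and you prove (b) by applying \eqref{po} directly with $\varphi=u^\varepsilon$ rather than inserting $\phi=\varphi(u^\varepsilon)$ into \eqref{imp} and then setting $\varphi$ equal to the identity as the paper does; these are equivalent, and yours is marginally more direct.
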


\begin{proof}
First of all, we notice that since we have a uniform (in $\varepsilon$)
Lipschitz estimate for the functions $u^{\varepsilon}$, 
there exists a compact set $K \subset \mathbb{T}^n \times \mathbb{R}^n$
such that
$$
\text{supp}\,\mu^{\varepsilon} \subset K, \hspace{2 cm}\forall \, \varepsilon > 0.
$$
Moreover, up to subsequences, we have \eqref{weakconv}, that is
$$
\lim_{\varepsilon \to 0} 
\int_{\mathbb{T}^n \times \mathbb{R}^n} \phi \, d \mu^{\varepsilon}
\to \int_{\mathbb{T}^n \times \mathbb{R}^n} \phi \, d \mu, 
$$
for every function $\phi \in C_c (\mathbb{T}^n \times \mathbb{R}^n)$, 
for some probability measure $\mu \in \mathcal{P} (\mathbb{T}^n \times \mathbb{R}^n)$,
and this proves \eqref{weakconv}.
From what we said, it follows that
$$
\text{supp}\, \mu \subset K.
$$
To show \eqref{veryimp}, we need to pass to the limit in relation \eqref{imp}.
First, let us focus on the second term of the aforementioned formula:
\begin{equation} \label{imp2}
\int_{\mathbb{T}^n \times \mathbb{R}^n}  
\left[ \frac{\varepsilon^2}{2} \phi_{x_i x_i} 
+ \varepsilon^2 u^{\varepsilon}_{x_i x_j} \phi_{x_i p_j} 
+ \frac{\varepsilon^2}{2}  u^{\varepsilon}_{x_i x_k} 
u^{\varepsilon}_{x_i x_j} \phi_{p_k p_j}
\right] \, d \mu^{\varepsilon}.
\end{equation}
By the bounds of the previous section,
$$
\lim_{\varepsilon \to 0} 
\int_{\mathbb{T}^n \times \mathbb{R}^n}  
\left[ \frac{\varepsilon^2}{2} \phi_{x_i x_i} 
+ \varepsilon^2 u^{\varepsilon}_{x_i x_j} \phi_{x_i p_j} 
\right] \, d \mu^{\varepsilon} = 0.
$$
However, as in \cite{Ev}, the last term in \eqref{imp2}
does not vanish in the limit.
In fact, through a subsequence, for every $k,j=1,\ldots,n$ we have
$$
\frac{\varepsilon^2}{2} \int_{\mathbb{T}^n \times \mathbb{R}^n}  
u^{\varepsilon}_{x_i x_k} 
u^{\varepsilon}_{x_i x_j} \psi (x,p) \, d \mu^{\varepsilon} (x,p)
\longrightarrow \int_{\mathbb{T}^n \times \mathbb{R}^n}  \psi (x,p) \, dm_{kj} (x,p)
\quad \forall \, \psi \in C_c (\mathbb{T}^n \times \mathbb{R}^n),
$$
for some nonnegative, symmetric $n \times n$ matrix $( m_{kj} )_{k,j=1,\ldots n}$ of Borel measures.
Passing to the limit as $\varepsilon \to 0$ in \eqref{imp} condition \eqref{veryimp} follows.
From Remark \ref{pbdd} we infer that $\text{supp} \, m \subset K$,
so that \eqref{suppcompact} follows.

Let us show that $\mu$ satisfies conditions (a)--(c) with $P = 0$. 
As in \cite{Ev} and \cite{Tr}, consider 
$$
\int_{\mathbb{T}^n \times \mathbb{R}^n} \left( 
H (x, p ) - \overline{H}^{\varepsilon} \right)^2 \, d \mu^{\varepsilon} (x,p)
= \frac{\varepsilon^4}{4} \int_{\mathbb{T}^n \times \mathbb{R}^n} 
| \Delta u^{\varepsilon} (x) |^2 \, d \mu^{\varepsilon} (x,p)  \longrightarrow 0
$$
as $\varepsilon \to 0$, where we used \eqref{statHJeps} and \eqref{est}.
Therefore, (a) follows.
Let us consider relation \eqref{imp}, and let us choose
as test function $\phi = \varphi (u^{\varepsilon})$. 
We get
$$
\int_{\mathbb{T}^n \times \mathbb{R}^n} \varphi ' (u^{\varepsilon}) D_x u^{\varepsilon}
\cdot D_p H \, d \mu^{\varepsilon} 
+ \varepsilon^2 \int_{\mathbb{T}^n \times \mathbb{R}^n} 
\left( \varphi ' (u^{\varepsilon}) u^{\varepsilon}_{x_i x_i} 
+ \varphi '' (u^{\varepsilon}) (u^{\varepsilon}_{x_i})^2 \right) \, d \mu^{\varepsilon} = 0.
$$
Passing to the limit as $\varepsilon \to 0$, we have
$$
\int_{\mathbb{T}^n \times \mathbb{R}^n} \varphi ' (u ) \, p \cdot D_p H \, d \mu = 0.
$$
Choosing $\varphi (u) = u$ we get (b). 
Finally, relation (c) follows by simply choosing in \eqref{veryimp} 
test functions $\phi$ that do not depend on the variable $p$.

\end{proof}
We conclude the section with a useful identity that will be used in Section \ref{Ex}.

\begin{prop}
 For every $\lambda \in \mathbb{R}$
\begin{equation} \label{iul}
\int_{\mathbb{T}^n \times \mathbb{R}^n}  e^{\lambda H} \left(
\lambda H_{p_k} H_{p_j} + H_{p_k p_j} \right) \, d m_{kj} = 0.
\end{equation}
\end{prop}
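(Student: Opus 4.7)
The plan is to exploit the characterization \eqref{veryimp} by inserting a carefully chosen test function. Specifically, I would take $\phi$ to be (a compactly supported extension of) $e^{\lambda H(x,p)}$. Since \eqref{veryimp} only applies to $\phi \in C^2_c(\mathbb{T}^n \times \mathbb{R}^n)$ and $e^{\lambda H}$ is not compactly supported, I would first invoke \eqref{suppcompact}: both $\mathrm{supp}\,\mu$ and $\mathrm{supp}\,m$ lie in a common compact set $K$, so I can pick a cutoff $\chi \in C^{\infty}_c(\mathbb{T}^n \times \mathbb{R}^n)$ with $\chi \equiv 1$ on a neighborhood of $K$ and set $\phi(x,p) := \chi(x,p)\,e^{\lambda H(x,p)}$. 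On that neighborhood, all derivatives of $\phi$ of order $\le 2$ agree with those of $e^{\lambda H}$, so in every integral against $\mu$ or $m_{kj}$ we may replace $\phi$ by $e^{\lambda H}$.

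The first key computation is that the Poisson bracket vanishes identically:
\[
\{e^{\lambda H}, H\} = D_p(e^{\lambda H}) \cdot D_x H - D_x(e^{\lambda H}) \cdot D_p H = \lambda e^{\lambda H}\bigl(D_p H \cdot D_x H - D_x H \cdot D_p H\bigr) = 0.
\]
Hence the first integral in \eqref{veryimp} contributes nothing. The second computation is the product rule for the second $p$-derivatives:
\[
\partial^2_{p_k p_j}\bigl(e^{\lambda H}\bigr) = \lambda\, e^{\lambda H}\bigl(\lambda H_{p_k} H_{p_j} + H_{p_k p_j}\bigr).
\]
Substituting into \eqref{veryimp} with our $\phi$ yields
\[
\lambda \int_{\mathbb{T}^n \times \mathbb{R}^n} e^{\lambda H}\bigl(\lambda H_{p_k} H_{p_j} + H_{p_k p_j}\bigr)\,dm_{kj} = 0 \qquad \text{for every } \lambda \in \mathbb{R}.
\]

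For $\lambda \ne 0$, dividing through gives exactly \eqref{iul}. For $\lambda = 0$, the identity \eqref{iul} reduces to $\int H_{p_k p_j}\,dm_{kj} = 0$, which follows either by continuity in $\lambda$ (the integrand is smooth and the measures $m_{kj}$ have compact support, so the map $\lambda \mapsto \int e^{\lambda H}(\lambda H_{p_k}H_{p_j}+H_{p_kp_j})\,dm_{kj}$ is continuous), or directly by repeating the argument with $\phi$ a compactly supported extension of $H$ itself, noting that $\{H,H\}=0$ and $\partial^2_{p_kp_j}H = H_{p_kp_j}$.

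There is essentially no hard step here: the only technicality is the use of a cutoff to legitimize the test function $e^{\lambda H}$ in \eqref{veryimp}, and this is immediate from \eqref{suppcompact}. The conceptual point is simply that $e^{\lambda H}$ is a first integral of the unperturbed Hamiltonian flow, so the whole burden of the identity falls on the dissipation term.
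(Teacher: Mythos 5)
Your proof is correct and follows essentially the same route as the paper: plug $\phi = e^{\lambda H}$ into \eqref{veryimp}, observe $\{e^{\lambda H}, H\} = 0$ so only the dissipation term survives, and compute $\partial^2_{p_k p_j}(e^{\lambda H}) = \lambda e^{\lambda H}(\lambda H_{p_k}H_{p_j} + H_{p_k p_j})$. You are somewhat more careful than the paper in two places it leaves implicit: (i) cutting off $e^{\lambda H}$ using the compact support of $\mu$ and $m$ from \eqref{suppcompact} to make it an admissible test function in $C^2_c$, and (ii) noting that the raw substitution produces an extra prefactor of $\lambda$ on the left of \eqref{iul}, so the $\lambda=0$ case requires either a continuity argument or the direct choice $\phi = H$ — both of which you supply.
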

\begin{proof}
First recall that for any function $f: \mathbb{R} \to \mathbb{R}$
of class $C^1$
$$
\{ H, f (H) \} = 0,
$$
and, furthermore, for any $\psi \in C^1 (\mathbb{T}^n \times \mathbb{R}^n)$
$$
\{ H, \psi f (H) \} = \{ H, \psi \} \, f(H).
$$
Let now $\lambda \in \mathbb{R}$.
By choosing in \eqref{veryimp} $\phi = \psi f(H)$ 
with $f(z) = e^{\lambda z}$ and $\psi \equiv 1$ we conclude the proof.
\end{proof}

\end{section}

\begin{section}{Support of the dissipation measures} 

We discuss now in a more detailed way the structure of $\text{supp} \, m$.
\begin{prop}
We have
\begin{equation} \label{Suppm}
\text{supp} \, m \subset \overline{
\bigcup_{x \in \mathbb{T}^n} \text{co} \, G (x)}=:K,
\end{equation}
where  with $\text{co} \, G (x)$ we denote the convex hull in $\mathbb{R}^n$ of the set $G (x)$,
and
$$
G ( \overline{x} ) := \text{supp}\, \mu \cap \{ (x,p) 
\in \mathbb{T}^n \times \mathbb{R}^n : x = \overline{x} \}, \quad \overline{x} \in \mathbb{T}^n.
$$
\end{prop}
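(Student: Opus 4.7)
The plan is to combine identity \eqref{veryimp} with a Hahn-Banach separation argument: when the test function in \eqref{veryimp} is supported away from $\textnormal{supp}\,\mu$, the Poisson-bracket term vanishes and \eqref{veryimp} collapses to a pure distributional equation for $m$. Let $(\bar x,\bar p)\notin K$ and pick $r>0$ such that $\bar B_{2r}(\bar x,\bar p)\cap K=\emptyset$; equivalently, $\text{co}\,G(x)\cap \bar B_{2r}(\bar p)=\emptyset$ for every $x\in \bar B_{2r}(\bar x)$. A pointwise Hahn-Banach separation, together with a finite-cover / partition-of-unity argument on $\bar B_r(\bar x)$ (using upper-semicontinuity of $x\mapsto \text{co}\,G(x)$), produces an open cover $\{U_1,\ldots,U_N\}$ of $\bar B_r(\bar x)$ and, for each $i$, a unit vector $\zeta^{(i)}$, a scalar $c_i$, and $\delta_i>0$ with $\zeta^{(i)}\cdot\bar p>c_i+\delta_i$ and $\sup_{q\in \text{co}\,G(x)}\zeta^{(i)}\cdot q<c_i-\delta_i$ for all $x\in U_i$.

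For each $i$, plug the test function $\phi(x,p)=\alpha(x)\,\psi(\zeta^{(i)}\cdot p)\,\omega(p)$ into \eqref{veryimp}, where $\alpha\in C_c^\infty(U_i)$, $\psi\in C_c^2((c_i+\delta_i/2,+\infty))$, and $\omega\in C_c^\infty(\mathbb{R}^n)$ equals $1$ on the compact projection of $\textnormal{supp}\,\mu\cup\textnormal{supp}\,m$ to $\mathbb{R}^n$ (finite by \eqref{suppcompact}). By the separation, both $\phi$ and $D\phi$ vanish on $\textnormal{supp}\,\mu$, so $\int\{\phi,H\}\,d\mu=0$ and \eqref{veryimp} reduces to $\int\phi_{p_k p_j}\,dm_{kj}=0$. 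On $\textnormal{supp}\,m$ one has $\omega\equiv 1$, so $\phi_{p_k p_j}=\alpha(x)\,\psi''(\zeta^{(i)}\cdot p)\,\zeta^{(i)}_k\zeta^{(i)}_j$, yielding
\begin{equation*}
\int\alpha(x)\,\psi''(\zeta^{(i)}\cdot p)\,dm_{\zeta^{(i)}}(x,p)=0, \qquad m_{\zeta^{(i)}}:=\zeta^{(i)}_k\zeta^{(i)}_j\, m_{kj}.
\end{equation*}
Disintegrating $m_{\zeta^{(i)}}$ over its $x$-marginal and pushing the fibre measure forward under $p\mapsto\zeta^{(i)}\cdot p$ produces, for a.e. $x\in U_i$, a nonnegative measure on $\mathbb{R}$ of finite total mass whose distributional second derivative vanishes on the unbounded half-line $(c_i+\delta_i/2,+\infty)$. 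Such a measure is affine there, and affine non-negative integrable on an unbounded half-line must vanish identically. Hence $m_{\zeta^{(i)}}$ carries no mass on the open slab $U_i\times\{\zeta^{(i)}\cdot p>c_i+\delta_i/2\}$, which is an open neighborhood of $(x,\bar p)$ for every $x\in U_i$.

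To upgrade from the scalar $m_{\zeta^{(i)}}$ to the full matrix $M:=(m_{kj})$, use positive semi-definiteness: $(\zeta^{(i)})^T M(E)\zeta^{(i)}=0$ forces $M(E)\zeta^{(i)}=0$. Since the strict separation persists for every $\zeta'$ in a small open cone around $\zeta^{(i)}$, applying the same argument with $\zeta'$ in place of $\zeta^{(i)}$ gives $M(E)\zeta'=0$ on a common neighborhood of $(\bar x,\bar p)$; since such $\zeta'$ span $\mathbb{R}^n$, we conclude $M\equiv 0$ on that neighborhood, i.e.\ $(\bar x,\bar p)\notin\textnormal{supp}\,m$.

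\emph{Main obstacle.} The most delicate step is the uniform selection of the separating direction on a whole open neighborhood of $\bar x$, since $\text{co}\,G(x)$ may rotate with $x$ and individual Hahn-Banach hyperplanes need not align; this forces the partition-of-unity argument built on upper-semicontinuity of $x\mapsto\text{co}\,G(x)$ and compactness of $\bar B_r(\bar x)$. Once that uniform separation is in place, the half-line distributional identity $(m_{\zeta^{(i)}})''=0$, combined with non-negativity and finite mass, immediately gives the vanishing on the half-line, and the upgrade from the scalar $m_{\zeta^{(i)}}$ to the matrix $m$ follows from positive semi-definiteness and the spanning argument via small rotations of $\zeta^{(i)}$.
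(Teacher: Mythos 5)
Your proof is correct and reaches the same conclusion by a genuinely different implementation of the same underlying mechanism. The paper's sketch constructs, for each small $\tau>0$, a single global function $\eta_\tau$ that is $p$-convex everywhere, vanishes on a convex open neighborhood $K_\tau$ of $K$ (hence on a neighborhood of $\textnormal{supp}\,\mu$, killing the $\mu$-term in \eqref{veryimp}), and is \emph{uniformly} $p$-convex outside a slightly larger region $K_{2\tau}$; testing \eqref{veryimp} against $\eta_\tau$ then gives $\int(\eta_\tau)_{p_kp_j}\,dm_{kj}=0$, and positive semi-definiteness of $m$ forces $\textnormal{supp}\,m\subset\bigcup_x K_{2\tau}(x)$, after which $\tau\to0$ finishes. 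You replace this global bump construction by a local Hahn--Banach separation: for a point $(\bar x,\bar p)\notin K$ you pick a separating direction $\zeta$ (valid on a neighborhood of $\bar x$ by upper-semicontinuity of $x\mapsto\textnormal{co}\,G(x)$), test \eqref{veryimp} with products $\alpha(x)\psi(\zeta\cdot p)\omega(p)$, observe that the $\mu$-term dies because $\phi$ and $D\phi$ vanish on $\textnormal{supp}\,\mu$, obtain the distributional relation $(m_\zeta)''=0$ on a half-line for the scalar measure $m_\zeta=\zeta^Tm\zeta$, and kill it using nonnegativity plus finite mass (or, even more simply, compactness of $\textnormal{supp}\,m$ from \eqref{suppcompact}: an affine density that is eventually zero is zero on the whole half-line). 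The PSD-plus-cone argument then upgrades $m_\zeta=0$ to $m=0$ near $(\bar x,\bar p)$. The paper's route is shorter once one grants the existence of the auxiliary convex functions $\eta_\tau$ with the stated strict-convexity and distance properties (a nontrivial smoothing step the sketch glosses over); your route avoids constructing any global convex bump at the cost of the partition-of-unity, the one-dimensional distributional ODE, and the linear-algebra upgrade. Both hinge on the same two facts: \eqref{veryimp} with test functions that vanish to first order on $\textnormal{supp}\,\mu$, and the nonnegative definiteness of the dissipation matrix.
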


\begin{rem}
We stress that the convex hull of the set $G (x)$ is taken only with respect to the variable $p$,
while the closure in the right-hand side of \eqref{Suppm} 
is taken in \textit{all} $\mathbb{T}^n \times \mathbb{R}^n$.
\end{rem}

\begin{proof}[Sketch of the proof]
For $\tau>0$ sufficiently small, we can choose an open set 
$K_\tau$ in $\mathbb{T}^n \times \mathbb{R}^n$ such that 
$K \subset K_\tau$, $\text{dist} \, (\partial K_{\tau}, K) < \tau$, 
and $K_\tau (x):= \{p\in \mathbb R^n:~(x,p) \in K_\tau\}$ is convex for every $x \in \mathbb T^n$.
 
Also, we can find a smooth open set $K_{2\tau} \subset \mathbb{T}^n \times \mathbb{R}^n$
such that, for every $x \in \mathbb{T}^n$,  
$K_{2 \tau} (x):= \{p\in \mathbb R^n:~(x,p) \in K_{2 \tau}\}$ is \textit{strictly convex},
$K_{2\tau} (x) \supset  K_\tau (x)$, and $\text{dist} \, (\partial K_{2\tau} (x), K_\tau (x)) < \tau$.

Finally, we can construct a smooth function 
$\eta_\tau  : \mathbb T^n \times \mathbb R^n \to \mathbb R$
such that for every $x \in \mathbb{T}^n$:
\begin{itemize}
\item $\eta_\tau(x,p)=0$ for $p \in K_\tau(x)$.
\item $p \mapsto \eta_\tau(x,p)$ is convex.
\item $p \mapsto \eta_\tau(x,p)$ is \textit{uniformly convex} on $\mathbb R^n \setminus K_{2\tau} (x)$. 
\end{itemize}
In this way, $\eta_\tau(x,p)=0$ on $K_\tau \supset K \supset \text{supp}\mu$. Therefore
$$
\int_{\mathbb T^n \times \mathbb R^n} \{\eta_\tau, H\} d\mu=0.
$$
Combining with \eqref{veryimp},
$$
\int_{\mathbb T^n \times \mathbb R^n} (\eta_\tau)_{p_k p_j} dm_{kj} =0,
$$
which implies $\text{supp}\,m \subset \bigcup_{x \in \mathbb T^n} K_{2\tau}(x)$. Letting $\tau \to 0$, we
finally get the desired result.
\end{proof}
As a consequence, we have the following corollary.
\begin{cor}
$$
\textnormal{supp}\,m \subset \overline{ \textnormal{co}  \{ H (x,p) \leq \overline{H} \} }.
$$

\end{cor}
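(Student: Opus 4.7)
The plan is to derive this corollary as a short consequence of the previous proposition combined with property (a) of the Mather measure $\mu$ established in Theorem \ref{exist2}. The content of the previous proposition bounds $\textnormal{supp}\,m$ by the closure of the union of $p$-convex hulls of the slices $G(x)$ of $\textnormal{supp}\,\mu$; what remains is to replace each slice $G(x)$ by the $\overline{H}$-sublevel slice of $H$.

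First, I would recall that the Mather measure $\mu$ satisfies property (a), namely $H(x,p)=\overline{H}$ for $\mu$-a.e.\ $(x,p)$. Since $\textnormal{supp}\,\mu$ is closed and $H$ is continuous, this gives the set-theoretic inclusion
$$
\textnormal{supp}\,\mu \subset \{(x,p) \in \mathbb{T}^n\times\mathbb{R}^n : H(x,p)=\overline{H}\} \subset \{(x,p) : H(x,p)\le \overline{H}\}.
$$
Consequently, for every $\overline{x}\in\mathbb{T}^n$,
$$
G(\overline{x}) \subset \{\overline{x}\}\times\{p\in\mathbb{R}^n : H(\overline{x},p)\le \overline{H}\}.
$$

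Next, I would take the $p$-convex hull of both sides (recalling the remark after the previous proposition: $\textnormal{co}$ is taken only in the $p$-variable). The set $\{p : H(\overline{x},p)\le \overline{H}\}$ is already a subset of $\mathbb{R}^n$, and convex hulls are monotone with respect to inclusion, so
$$
\textnormal{co}\,G(\overline{x}) \subset \{\overline{x}\}\times\textnormal{co}\{p : H(\overline{x},p)\le \overline{H}\} \subset \textnormal{co}\{(x,p) : H(x,p)\le \overline{H}\}.
$$

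Finally, taking the union over $\overline{x}\in\mathbb{T}^n$ on the left and then the closure, and invoking the previous proposition, I obtain
$$
\textnormal{supp}\,m \subset \overline{\bigcup_{\overline{x}\in\mathbb{T}^n}\textnormal{co}\,G(\overline{x})} \subset \overline{\textnormal{co}\{(x,p) : H(x,p)\le \overline{H}\}},
$$
which is the claim. There is no real obstacle here; the only mild subtlety is the consistent interpretation of $\textnormal{co}$ in $p$ slice-wise (rather than in the non-convex ambient space $\mathbb{T}^n\times\mathbb{R}^n$), and the use of the continuity of $H$ to propagate the $\mu$-a.e.\ identity $H=\overline{H}$ to the support of $\mu$.
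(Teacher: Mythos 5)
Your proof is correct and takes essentially the same route as the paper: the paper's own proof simply asserts the inclusion $G(x) \subset \{H(x,p) \le \overline{H}\}$ and cites the preceding proposition, while you supply the (correct) justification for that inclusion via property (a) of the Mather measure together with the continuity of $H$ and the closedness of $\textnormal{supp}\,\mu$.
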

 
\begin{proof}
The proof follows simply from the fact that for every $x \in \mathbb{T}^n$ we have
$$
G (x) \subset \{ H (x,p) \leq \overline{H} \}.
$$ 
\end{proof}
 
\end{section}

\begin{section}{Averaging}

In this section we prove some additional estimates concerning averaging
with respect to the process \eqref{statHJepsintro}. 
When necessary, to avoid confusion we will explicitly write the dependence on $P$.
Let us start with a definition.
\begin{defn}
We define the \textit{rotation number} $\rho_0$ associated to the measures $\mu$
and $m$ as
$$
\rho_0 : =\lim_{\varepsilon \to 0} 
\lim_{T \to + \infty} 
E \left[ \frac{\mathbf{x}^{\varepsilon} (T) - \mathbf{x}^{\varepsilon} (0) }{T}  \right],
$$
where the limit is taken along the same subsequences as in \eqref{limT} and \eqref{weakconv}.
\end{defn}
The following theorem gives a formula for the rotation number.

\begin{thm}

There holds 
\begin{equation} \label{rotnum}
\rho_0 = \int_{\mathbb{T}^n \times \mathbb{R}^n}  D_p H \, d \mu.
\end{equation}
Moreover, defining for every $\varepsilon > 0$ the variable
$\mathbf{X}^{\varepsilon} :=\mathbf{x}^{\varepsilon} +D_Pu^\varepsilon(\mathbf{x}^{\varepsilon})$,
we have
\begin{equation} \label{expv}
E \left[  \frac{\mathbf{X}^{\varepsilon} (T) -  \mathbf{X}^{\varepsilon} (0)}{T} \right]= - D_P \overline{H}^{\varepsilon}  (P),
\end{equation}
and
\begin{align*}
\lim_{T \to + \infty} 
E \left[  \frac{\left( \mathbf{X}^{\varepsilon} (T) -  \mathbf{X}^{\varepsilon} (0) + D_P \overline{H}^{\varepsilon}  (P)  T  \right)^2}{T} \right] 
&\leq 2 \, n \, \varepsilon^2  +  2 \int_{\mathbb{T}^n} | D_P u^{\varepsilon} |^2 \, d \theta_{\mu^{\varepsilon}} \\
&\hspace{.5cm}+ 2  \int_{\mathbb{T}^n} | D_p H - D_P \overline{H}^{\varepsilon} |^2 \, d \theta_{\mu^{\varepsilon}}.
\end{align*}

\end{thm}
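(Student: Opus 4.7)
\emph{Step 1 (Rotation number formula \eqref{rotnum}).} I would start by integrating the SDE \eqref{stoc} from $0$ to $T$, which gives
\[
\mathbf{x}^{\varepsilon}(T)-\mathbf{x}^{\varepsilon}(0) = -\int_0^T D_p H(\mathbf{x}^{\varepsilon},\mathbf{p}^{\varepsilon})\,dt + \varepsilon\,w_T.
\]
Taking expectation kills the martingale term. Dividing by $T$ and letting $T\to\infty$ along the chosen subsequence, the definition \eqref{limT} of $\mu^\varepsilon$ produces $\int D_pH\, d\mu^\varepsilon$ (up to sign convention). Then, since $\mu^\varepsilon$ and $\mu$ are supported in a common compact set $K$ (Theorem \ref{exist2}), we may test against a cutoff of $D_p H$ and use $\mu^\varepsilon\weakst\mu$ to pass to the limit $\varepsilon\to 0$, obtaining \eqref{rotnum}.

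\emph{Step 2 (Average drift of $\mathbf{X}^\varepsilon$, identity \eqref{expv}).} Applying It\^o's formula \eqref{Ito} componentwise to $X^\varepsilon_i = x^\varepsilon_i + u^\varepsilon_{P_i}(\mathbf{x}^\varepsilon)$ and using \eqref{stoc}, I get
\[
dX^\varepsilon_i = \left[-H_{p_i} - u^\varepsilon_{P_i x_j} H_{p_j} + \tfrac{\varepsilon^2}{2}\Delta u^\varepsilon_{P_i}\right] dt + \varepsilon\bigl(\delta_{ij}+u^\varepsilon_{P_i x_j}\bigr)\,dw^j_t.
\]
Differentiating the stochastic cell problem \eqref{statHJeps} with respect to $P_i$ yields
\[
L^{\varepsilon,P} u^\varepsilon_{P_i} \;=\; \overline{H}^{\varepsilon}_{P_i} - H_{p_i},
\]
which is the key cancellation: it converts the bracket in the drift of $X^\varepsilon_i$ into exactly $-\overline{H}^{\varepsilon}_{P_i}$. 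Taking expectation (the stochastic integral has zero mean by boundedness of $u^\varepsilon_{P_i x_j}$, cf.\ estimate \eqref{est2} and smoothness) and dividing by $T$ produces \eqref{expv}.

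\emph{Step 3 (Variance estimate).} From Step 2 the deviation from the mean is a pure martingale:
\[
X^\varepsilon_i(T) - X^\varepsilon_i(0) + \overline{H}^\varepsilon_{P_i}T
\;=\; \varepsilon \int_0^T \bigl(\delta_{ij} + u^\varepsilon_{P_i x_j}(\mathbf{x}^\varepsilon(t))\bigr)\,dw^j_t.
\]
By It\^o's isometry, summing over $i$,
\[
E\bigl[|\mathbf{X}^\varepsilon(T)-\mathbf{X}^\varepsilon(0)+D_P\overline{H}^\varepsilon T|^2\bigr]
= \varepsilon^2 E\!\left[\int_0^T \sum_{i,j}\bigl(\delta_{ij}+u^\varepsilon_{P_i x_j}\bigr)^2 dt\right].
\]
Using $(a+b)^2\le 2a^2+2b^2$ we bound the integrand by $2n+2|D^2_{Px}u^\varepsilon|^2$. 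Dividing by $T$, letting $T\to\infty$ and invoking \eqref{limT} transforms the time average into an integral against $\theta_{\mu^\varepsilon}$, giving $2n\varepsilon^2+2\varepsilon^2\int|D^2_{Px}u^\varepsilon|^2\,d\theta_{\mu^\varepsilon}$. Finally applying Proposition \ref{propest}, inequality \eqref{est2}, yields the stated bound.

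\emph{Main obstacle.} The decisive technical point is Step 2: recognizing that the drift produced by It\^o's formula on $x\mapsto x+D_P u^\varepsilon(x)$ is precisely $-H_{p_i} - L^{\varepsilon,P}u^\varepsilon_{P_i}+\tfrac{\varepsilon^2}{2}\Delta u^\varepsilon_{P_i}+u^\varepsilon_{P_i x_j}H_{p_j}$\,---\,rearranged, this is $-H_{p_i}-(L^{\varepsilon,P}u^\varepsilon_{P_i})=-\overline{H}^\varepsilon_{P_i}$ by the $P$-differentiated cell equation. This ``change of variables'' linearizes the dynamics in the $\mathbf{X}^\varepsilon$ coordinate exactly as in the classical convex picture \eqref{easy}, and everything else follows from routine stochastic calculus and the a priori bounds already established.
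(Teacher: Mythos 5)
Your proposal is correct and follows essentially the same route as the paper's proof: Step~1 passes to the limit in the drift term (the paper does this via Dynkin's formula with $\phi(x)=x_i$, equivalent to integrating the SDE directly as you do), Step~2 uses the $P$-differentiated cell equation to collapse the drift of $\mathbf{X}^\varepsilon$ to $-D_P\overline{H}^\varepsilon$ (the paper's relation \eqref{nice}), and Step~3 bounds the variance via \eqref{est2}. The only cosmetic difference is that you invoke It\^o's isometry directly where the paper applies It\^o's formula to the squared deviation and then takes expectations --- these are equivalent.
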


\begin{proof}
Choosing $\phi (x) = x_i$ with $i=1,2,3$ in \eqref{exp} we obtain
$$
E \left[ \frac{\mathbf{x}^{\varepsilon} (T) - \mathbf{x}^{\varepsilon} (0) }{T}  \right]
= - E \left[  \frac{1}{T} \int_0^T D_p H 
(\mathbf{x}^{\varepsilon} (t) , \mathbf{p}^{\varepsilon} (t) ) \, dt \right].
$$
Passing to the limit as $T \to + \infty$ 
$$
\rho_{\varepsilon}:=
\lim_{T \to + \infty} 
E \left[ \frac{\mathbf{x}^{\varepsilon} (T) - \mathbf{x}^{\varepsilon} (0) }{T}  \right]
= \int_{\mathbb{T}^n \times \mathbb{R}^n}  D_p H \, d \mu^{\varepsilon}.
$$
We get \eqref{rotnum} by letting $\varepsilon$ go to zero. 

To prove \eqref{expv}, recalling It\^o's formula \eqref{Ito} we compute
\begin{align*}
d \mathbf{X}^{\varepsilon} 
& = d \mathbf{x}^{\varepsilon} + D^2_{P x} u^{\varepsilon} (\mathbf{x}^{\varepsilon}) \, d \mathbf{x}^{\varepsilon}
+ \frac{\varepsilon^2}{2} D_P \Delta u^{\varepsilon} (\mathbf{x}^{\varepsilon}) \, dt \\
& = \left( - D_p H ( \mathbf{x}^{\varepsilon}, \mathbf{p}^{\varepsilon}) ( I + D^2_{P x} u^{\varepsilon} (\mathbf{x}^{\varepsilon}) )
+ \frac{\varepsilon^2}{2} D_P \Delta u^{\varepsilon} (\mathbf{x}^{\varepsilon}) \right) dt 
+ \varepsilon ( I + D^2_{P x} u^{\varepsilon} (\mathbf{x}^{\varepsilon}) ) \, d w_t,
\end{align*}
where in the last equality we used \eqref{stoc}.
By differentiating equation \eqref{statHJeps} w.r.t. $P$ we obtain 
\begin{equation} \label{nice}
- D_p H ( \mathbf{x}^{\varepsilon}, \mathbf{p}^{\varepsilon}) ( I + D^2_{P x} u^{\varepsilon} (\mathbf{x}^{\varepsilon}) )
+ \frac{\varepsilon^2}{2} D_P \Delta u^{\varepsilon} (\mathbf{x}^{\varepsilon})
= - D_P \overline{H}^{\varepsilon} (P),
\end{equation}
so that 
\begin{equation} \label{dX}
d \mathbf{X}^{\varepsilon} 
= - D_P \overline{H}^{\varepsilon}  (P) \, dt + \varepsilon ( I + D^2_{P x} u^{\varepsilon}
(\mathbf{x}^{\varepsilon}) ) \, d w_t.
\end{equation}
Using the fact that
$$
E \left[ \int_0^T \varepsilon ( I + D^2_{P x} u^{\varepsilon} (\mathbf{x}^{\varepsilon}) ) \, d w_t \right] = 0,
$$
\eqref{expv} follows.

Finally, using once again It\^o's formula \eqref{Ito} and relation \eqref{dX} we can write
\begin{align*}
&d \left[ \left( \mathbf{X}^{\varepsilon} (t) -  \mathbf{X}^{\varepsilon} (0) + D_P \overline{H}^{\varepsilon}  (P)  t  \right)^2 \right] \\
&= 2 \left( \mathbf{X}^{\varepsilon} (t) -  \mathbf{X}^{\varepsilon} (0) + D_P \overline{H}^{\varepsilon}  (P)  t  \right)
(  d \, \mathbf{X}^{\varepsilon} +D_P \overline{H}^{\varepsilon}  (P)  \, dt ) 
+ \varepsilon^2 | I + D^2_{P x} u^{\varepsilon} (\mathbf{x}^{\varepsilon}) |^2 \, dt \\
& = 2 \, \varepsilon \left( \mathbf{X}^{\varepsilon} (t) -  \mathbf{X}^{\varepsilon} (0) + D_P \overline{H}^{\varepsilon}  (P)  t  \right)
 ( I + D^2_{P x} u^{\varepsilon} (\mathbf{x}^{\varepsilon}) ) \, d w_t 
 + \varepsilon^2 | I + D^2_{P x} u^{\varepsilon} (\mathbf{x}^{\varepsilon}) |^2 \, dt. 
\end{align*}
Hence, 
\begin{align*}
&E \left[  \left( \mathbf{X}^{\varepsilon} (T) -  \mathbf{X}^{\varepsilon} (0) + D_P \overline{H}^{\varepsilon}  (P)  T  \right)^2 \right] \\
&= E \left[  
\int_0^T 2 \, \varepsilon  \left( \mathbf{X}^{\varepsilon} (t) -  \mathbf{X}^{\varepsilon} (0) + D_P \overline{H}^{\varepsilon}  (P)  t  \right)
( I + D^2_{P x} u^{\varepsilon} (\mathbf{x}^{\varepsilon})) \, d w_t  
+ \int_0^T \varepsilon^2 | I + D^2_{P x} u^{\varepsilon} (\mathbf{x}^{\varepsilon}) |^2 \, dt \right] \\
&= E \left[  \int_0^T  \varepsilon^2 | I + D^2_{P x} u^{\varepsilon} (\mathbf{x}^{\varepsilon}) |^2 \, dt \right].
\end{align*}
Dividing by $T$ and letting $T$ go to infinity
\begin{align*}
&\lim_{T \to + \infty} 
E \left[  \frac{\left( \mathbf{X}^{\varepsilon} (T) -  \mathbf{X}^{\varepsilon} (0) + D_P \overline{H}^{\varepsilon}  (P)  T  \right)^2}{T} \right] 
= \lim_{T \to + \infty} E \left[  
\int_0^T  \frac{ \varepsilon^2  | I + D^2_{P x} u^{\varepsilon} (\mathbf{x}^{\varepsilon}) |^2}{T} \, dt \right] \\
& \hspace{.2cm}=  \varepsilon^2  \int_{\mathbb{T}^n} | I + D^2_{P x} u^{\varepsilon} |^2 \, d \theta_{\mu^{\varepsilon}} 
\leq 2 \, n \, \varepsilon^2 +  2 \, \varepsilon^2 \int_{\mathbb{T}^n}   | D^2_{P x} u^{\varepsilon} |^2 
\, d \theta_{\mu^{\varepsilon}} \\
&\hspace{.2cm}\leq 2 \, n \, \varepsilon^2 
+  2 \int_{\mathbb{T}^n} | D_P u^{\varepsilon} |^2 \, d \theta_{\mu^{\varepsilon}} 
+ 2  \int_{\mathbb{T}^n} | D_p H - D_P \overline{H}^{\varepsilon} |^2 \, d \theta_{\mu^{\varepsilon}},
\end{align*}
where we used \eqref{est2}.

\end{proof}
We conclude the section with a proposition which shows 
in a formal way how much relation \eqref{chvar}
is ``far'' from being an actual change of variables.
Let us set $w^{\varepsilon} (x,P) : = P \cdot x + u^{\varepsilon} (x,P)$, 
where $u^{\varepsilon} (x,P)$ is a $\mathbb{Z}^n$-periodic viscosity solution of \eqref{statHJepsintro},
and let $k \in \mathbb{Z}^n$.
We recall that in the convex setting the following weak version 
of the change of variables \eqref{chvar} holds \cite[Theorem~9.1]{EG}:
$$
\lim_{h \to 0} \int_{\mathbb{T}^n}  \Phi \left(  D^h_P u (x,P) \right) \, d \theta_{\mu}
= \int_{\mathbb{T}^n} \Phi \left( X \right) \, d X,
$$
for each continuous $\mathbb{Z}^n$-periodic function 
$\Phi: \mathbb{R}^n \to \mathbb{R}$, where 
$$
D^h_P u (x,P) := \left(  \frac{u (x,P + h e_1) - u (x,P)}{h} , \ldots, \frac{u (x,P + h e_n) - u (x,P)}{h}\right),
$$
$e_1,\ldots,e_n$ being the vectors of the canonical basis in $\mathbb{R}^n$.
The quoted result was proven by the authors by  considering the Fourier series of $\Phi$, 
and then analyzing the integral on the left-hand side mode by mode.
The next proposition shows what happens for a fixed mode in the non convex case.
\begin{prop}
The following inequality holds:
\begin{align*}
&( k \cdot D_P \overline{H}^{\varepsilon} ) \int_{\mathbb{T}^n} 
e^{ 2 \pi i k \cdot D_P w^{\varepsilon} }  \, d \theta_{\mu^{\varepsilon}} \\
&\hspace{1cm}\leq 2 \pi | k |^2 \left( \varepsilon^2
 +  \int_{\mathbb{T}^n} | D_P u^{\varepsilon} |^2 \, d \theta_{\mu^{\varepsilon}} 
+ \int_{\mathbb{T}^n} | D_p H - D_P \overline{H}^{\varepsilon} |^2 \, d \theta_{\mu^{\varepsilon}} \right).
\end{align*}
\end{prop}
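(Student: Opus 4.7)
The plan is to choose in the stationary identity \eqref{po} (extended by linearity to complex-valued test functions) the $\mathbb{Z}^n$-periodic smooth function
\[
\phi(x) := e^{2\pi i k \cdot D_P w^\varepsilon(x,P)} = e^{2\pi i k \cdot (x + D_P u^\varepsilon(x,P))},
\]
and then to exploit the cell problem \eqref{statHJeps} differentiated with respect to $P$ to achieve a magical cancellation. Note that $|\phi| \equiv 1$, which will be crucial at the end.

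First I would compute the relevant derivatives. Setting $A := I + D^2_{xP} u^\varepsilon$, a direct calculation gives
\[
D_x \phi = 2\pi i\,\phi\, A^T k, \qquad \Delta_x \phi = -4\pi^2 |A^T k|^2 \phi + 2\pi i \,\phi \sum_{l} k_l \Delta_x u^\varepsilon_{P_l}.
\]
Next I would differentiate \eqref{statHJeps} with respect to each $P_l$, multiply by $k_l$ and sum, producing exactly the $k$-contracted form of \eqref{nice}:
\[
k \cdot D_p H \;+\; D_p H \cdot (D^2_{xP} u^\varepsilon)\,k \;-\; \tfrac{\varepsilon^2}{2} \sum_l k_l \Delta_x u^\varepsilon_{P_l} \;=\; k \cdot D_P \overline{H}^\varepsilon.
\]

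Substituting the two derivative formulas into \eqref{po}, the cross-term $D_p H \cdot (D^2_{xP} u^\varepsilon)k$ appearing in $D_x \phi \cdot D_p H$ combines with the $\sum_l k_l \Delta_x u^\varepsilon_{P_l}$ piece of $\Delta_x \phi$ and, through the displayed identity above, collapses exactly into $k \cdot D_P \overline{H}^\varepsilon$. One is left with
\[
2\pi i (k \cdot D_P \overline{H}^\varepsilon) \int_{\mathbb{T}^n} \phi\, d\theta_{\mu^\varepsilon}
\;+\; 2 \pi^2 \varepsilon^2 \int_{\mathbb{T}^n} |A^T k|^2 \phi \, d\theta_{\mu^\varepsilon} \;=\; 0.
\]
Taking modulus and using $|\phi|\equiv 1$ together with $|A^T k|^2 \le 2|k|^2 + 2|D^2_{xP} u^\varepsilon|^2 |k|^2$ gives
\[
|k \cdot D_P \overline{H}^\varepsilon|\, \Bigl| \int \phi \, d\theta_{\mu^\varepsilon}\Bigr|
\;\le\; 2\pi |k|^2 \Bigl(\varepsilon^2 + \varepsilon^2 \int_{\mathbb T^n} |D^2_{Px} u^\varepsilon|^2\, d\theta_{\mu^\varepsilon}\Bigr),
\]
and estimate \eqref{est2} from Proposition \ref{propest} absorbs the last integral into exactly the two terms on the right-hand side of the claim.

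The main obstacle is the algebraic cancellation in the middle step: one must verify carefully that the cross-term $D_p H \cdot (D^2_{xP} u^\varepsilon) k$ produced by $D_x\phi \cdot D_p H$ conspires with the third-order imaginary term in $\Delta_x \phi$ — precisely through the $P$-differentiated cell problem — to leave only the clean expression $k \cdot D_P \overline{H}^\varepsilon$. This is also the structural reason why the exponent in $\phi$ is taken to be $k\cdot D_P w^\varepsilon = k\cdot(x + D_P u^\varepsilon)$ rather than $k \cdot D_P u^\varepsilon$ alone: the added $k\cdot x$ is exactly what gives the $I$ in $A = I + D^2_{xP} u^\varepsilon$ and thus triggers the full matrix $A$ needed to invoke \eqref{nice}.
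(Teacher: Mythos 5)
Your proof is correct and follows essentially the same route as the paper: choose the test function $\varphi=e^{2\pi i k\cdot D_P w^\varepsilon}$ in identity \eqref{po}, use the $P$-differentiated cell problem (i.e.\ \eqref{dbeta} contracted against $k$, equivalently \eqref{nice}) to collapse the first-order terms into $k\cdot D_P\overline{H}^\varepsilon$, take the modulus using $|\varphi|\equiv1$, and absorb the remaining $\varepsilon^2\int|D^2_{Px}u^\varepsilon|^2\,d\theta_{\mu^\varepsilon}$ term via estimate \eqref{est2}. The paper packages the cancellation more compactly as $L^{\varepsilon,P}(e^{2\pi i g})=2\pi i\,e^{2\pi i g}\bigl[L^{\varepsilon,P}g-\pi i\varepsilon^2|D_xg|^2\bigr]$ with $L^{\varepsilon,P}(k\cdot D_P w^\varepsilon)=k\cdot D_P\overline{H}^\varepsilon$, whereas you expand $D_x\varphi$ and $\Delta_x\varphi$ by hand and then recombine; this is the same argument at a lower level of abstraction.
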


\begin{proof}
Recalling identity \eqref{po} with
$$
\varphi (x) = e^{ 2 \pi i k \cdot D_P w^{\varepsilon} (x,P)}
$$ 
we obtain
\begin{align*}
0 &= \int_{\mathbb{T}^n} L^{\varepsilon, P} e^{ 2 \pi i k \cdot D_P w^{\varepsilon} } \, d \theta_{\mu^{\varepsilon}} \\
& = 2 \pi i \int_{\mathbb{T}^n} e^{ 2 \pi i k \cdot D_P w^{\varepsilon} }
\left[  L^{\varepsilon, P} \left( k \cdot D_P w^{\varepsilon} \right)
- \pi i \varepsilon^2 | D_x ( k \cdot D_P w^{\varepsilon}  ) |^2 \right] \, d \theta_{\mu^{\varepsilon}} \\
&= 2 \pi i \int_{\mathbb{T}^n} e^{ 2 \pi i k \cdot D_P w^{\varepsilon} }
\left[   k \cdot D_P \overline{H}^{\varepsilon} 
- \pi i \varepsilon^2 | D_x ( k \cdot D_P w^{\varepsilon}  ) |^2 \right] \, d \theta_{\mu^{\varepsilon}},
\end{align*}
where we used \eqref{dbeta} and the fact that $w^{\varepsilon} = P \cdot x + u^{\varepsilon}$.
Thus, thanks to estimate \eqref{est2}
\begin{align*}
&\left| ( k \cdot D_P \overline{H}^{\varepsilon} ) \int_{\mathbb{T}^n} 
e^{ 2 \pi i k \cdot D_P w^{\varepsilon} }  \, d \theta_{\mu^{\varepsilon}} \right| 
 \leq \pi \varepsilon^2 \int_{\mathbb{T}^n} | D_x ( k \cdot D_P w^{\varepsilon}  ) |^2 \, d \theta_{\mu^{\varepsilon}} \\
 & \leq 2 \pi  | k |^2 
 \left( \varepsilon^2 + \varepsilon^2 \int_{\mathbb{T}^n} | D^2_{P x} u^{\varepsilon} |^2 \, d \theta_{\mu^{\varepsilon}} \right) \\
 & \leq 2 \pi | k |^2 \left( \varepsilon^2
 +  \int_{\mathbb{T}^n} | D_P u^{\varepsilon} |^2 \, d \theta_{\mu^{\varepsilon}} 
+ \int_{\mathbb{T}^n} | D_p H - D_P \overline{H}^{\varepsilon} |^2 \, d \theta_{\mu^{\varepsilon}} \right).
 \end{align*}
\end{proof}

\begin{rem}
 When $H$ is uniformly convex, thanks to \eqref{unifcx2} the last chain of inequalities becomes
$$
\left| ( k \cdot D_P \overline{H}^{\varepsilon} ) \int_{\mathbb{T}^n} 
e^{ 2 \pi i k \cdot D_P w^{\varepsilon} }  \, d \theta_{\mu^{\varepsilon}} \right| 
\leq C | k |^2 \varepsilon^2 
\big( 1 + \textnormal{trace} \, ( D^2_{PP} \overline{H}^{\varepsilon} ) \big).
$$ 
Thus, if $\textnormal{trace} \, ( D^2_{PP} \overline{H}^{\varepsilon} ) \leq C$,
the right-hand side vanishes in the limit as $\varepsilon \to 0$, 
and we recover \cite[Theorem 9.1]{EG}.

\end{rem}

\end{section}

\begin{section}{Compensated compactness} \label{comcom}
In this section, some analogs of compensated compactness and Div-Curl lemma 
introduced by Murat and Tartar in the context of conservation laws (see \cite{Ev2}, \cite{T}) 
will be studied, in order to better understand the support of the Mather measure $\mu$. 
Similar analogs are also considered in \cite{Ev}, 
to investigate the shock nature of non-convex Hamilton-Jacobi equations.\\ 
What we are doing here is quite different from the original Murat and Tartar work (see \cite{T}), 
since we work on the support of the measure $\theta_{\mu^\varepsilon}$. 
Besides, our methods work on arbitrary dimensional space $\mathbb R^n$ 
while usual compensated compactness and Div-Curl lemma in the context of conservation laws 
can only deal with the case $n=1, 2$.
However, we can only derive one single relation and this is not enough to characterize the support of $\mu$ as
in the convex case.
To avoid confusion, when necessary we will explicitly write the dependence on the $P$ variable.

Let $\phi$ be a smooth function from $\mathbb T^n \times \mathbb R^n \rightarrow \mathbb R$, and
let $\rho^\varepsilon=\{\phi,H\} \theta_{\mu^\varepsilon} 
+\dfrac{ \varepsilon^2}{2} \phi_{p_j p_k} u^\varepsilon_{x_i x_j} u^\varepsilon_{x_i
x_k}\theta_{\mu^\varepsilon}$.
By (\ref{imp}) and (\ref{est}), there exists $C>0$ such that
$$
\int_{\mathbb T^n} |\rho^\varepsilon| dx \le C.
$$
So, up to passing to some subsequence, if necessary, we may assume that 
$\rho^\varepsilon \stackrel{*}{\rightharpoonup} \rho$ as a (signed) measure.\\
By (\ref{veryimp}), $\rho(\mathbb T^n)=0$. We have the following theorem.
\begin{thm}
\label{comp}
The following properties are satisfied:
\begin{itemize}
\item[(i)] for every $\phi \in C (\mathbb{T}^n \times \mathbb{R}^n)$
\begin{equation}
\label{comp1}
\int_{\mathbb T^n \times \mathbb R^n} D_pH \cdot (p-P) \, \phi(x,p) \, d\mu = \int_{\mathbb T^n} u \, d\rho;
\end{equation}
\item[(ii)] for every $\phi \in C (\mathbb{T}^n \times \mathbb{R}^n)$ 
and for every $\eta \in C^1(\mathbb T^n)$,
\begin{equation}
\label{comp2}
\int_{\mathbb T^n \times \mathbb R^n} D_pH \cdot D\eta \, \phi(x,p) \, d\mu = \int_{\mathbb T^n} \eta d\rho.
\end{equation}
\end{itemize}
\end{thm}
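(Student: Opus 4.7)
The plan is to plug the test function $\psi=u^\varepsilon\phi$ (for part (i)) and $\psi=\eta\phi$ (for part (ii)) into the fundamental identity \eqref{imp}, expand every derivative via the Leibniz rule, reorganize so that the Poisson-bracket piece together with the pure $p$-second-derivative piece reassemble into the measure $\rho^\varepsilon$, and then send $\varepsilon\to 0$.

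For part (i), since $u^\varepsilon$ depends only on $x$, one has $\{u^\varepsilon\phi,H\}=u^\varepsilon\{\phi,H\}-\phi\,Du^\varepsilon\cdot D_pH$ and $(u^\varepsilon\phi)_{p_kp_j}=u^\varepsilon\phi_{p_kp_j}$, while the Leibniz rule applied to $(u^\varepsilon\phi)_{x_ix_i}$ and $(u^\varepsilon\phi)_{x_ip_j}$ generates cross terms involving $Du^\varepsilon$ and $D^2u^\varepsilon$. Substituting into \eqref{imp} and collecting the $u^\varepsilon\{\phi,H\}$ together with the $\tfrac{\varepsilon^2}{2}u^\varepsilon u^\varepsilon_{x_ix_j}u^\varepsilon_{x_ix_k}\phi_{p_jp_k}$ contribution into $\int_{\mathbb T^n}u^\varepsilon\,d\rho^\varepsilon$, and using that $p-P=Du^\varepsilon$ on $\mathrm{supp}\,\mu^\varepsilon$, the identity becomes
\[
\int_{\mathbb T^n}u^\varepsilon\,d\rho^\varepsilon-\int_{\mathbb T^n\times\mathbb R^n}\phi\,(p-P)\cdot D_pH\,d\mu^\varepsilon+R^\varepsilon=0,
\]
where $R^\varepsilon$ is a sum of $\varepsilon^2$-weighted remainders, each containing \emph{at most} one factor of $D^2u^\varepsilon$.

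The decisive technical step is to prove $R^\varepsilon\to 0$. Summands with no $D^2u^\varepsilon$ are $O(\varepsilon^2)$ by the uniform bound on $u^\varepsilon$ and $Du^\varepsilon$ (Remark \ref{unifLip} together with Theorem \ref{appr}), while those with a single factor of $D^2u^\varepsilon$, such as $\varepsilon^2\int u^\varepsilon_{x_ix_j}u^\varepsilon\phi_{x_ip_j}\,d\mu^\varepsilon$, are controlled via Cauchy--Schwarz by the crucial estimate $\varepsilon^2\int|D^2_{xx}u^\varepsilon|^2\,d\theta_{\mu^\varepsilon}\le C$ from \eqref{est}, yielding $O(\varepsilon)$. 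The same bound implies $|\rho^\varepsilon|(\mathbb T^n)\le C$ uniformly in $\varepsilon$; combined with $u^\varepsilon\to u$ uniformly (Theorem \ref{appr}) and $\rho^\varepsilon\weakst\rho$, this allows me to conclude $\int u^\varepsilon\,d\rho^\varepsilon\to\int u\,d\rho$. The remaining $\mu^\varepsilon$ integral converges by $\mu^\varepsilon\weakst\mu$ on the uniformly compact support \eqref{suppcompact}, delivering \eqref{comp1}.

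Part (ii) runs by the same scheme with $u^\varepsilon$ replaced by $\eta\in C^2(\mathbb T^n)$, using the identity $\{\eta\phi,H\}=\eta\{\phi,H\}-\phi\,D\eta\cdot D_pH$; the remainder analysis is even easier because $\eta,D\eta,D^2\eta$ are $\varepsilon$-independent. This yields \eqref{comp2} for $\eta\in C^2$, and the extension to $\eta\in C^1(\mathbb T^n)$ follows by mollification, since both sides of \eqref{comp2} are continuous in $\eta$ with respect to the $C^1$-topology (the LHS because $\phi\,D_pH$ is bounded on $\mathrm{supp}\,\mu$, the RHS because $\rho$ is a finite signed measure). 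The main obstacle throughout is the bookkeeping inside $R^\varepsilon$: at most one factor of $D^2u^\varepsilon$ may be absorbed per error integral, so a single Cauchy--Schwarz pairing with \eqref{est} is exactly compatible with the available $\varepsilon^2$ prefactor and leaves essentially no slack.
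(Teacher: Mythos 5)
Your proposal is correct and takes essentially the same route as the paper: the paper carries out the integration by parts against $\theta_{\mu^\varepsilon}$ explicitly (using the adjoint equation for $\theta_{\mu^\varepsilon}$ together with the $x$-differentiated cell problem), whereas you obtain the identical cancellation in one step by inserting the $\varepsilon$-dependent test function $u^\varepsilon\phi$ (respectively $\eta\phi$) into the identity \eqref{imp}. The subsequent remainder analysis --- at most one factor of $D^2_{xx}u^\varepsilon$ per error term, Cauchy--Schwarz against \eqref{est}, the uniform Lipschitz bound, $u^\varepsilon\to u$ uniformly, $\rho^\varepsilon\weakst\rho$, and weak-$*$ convergence of $\mu^\varepsilon$ on a common compact support --- matches the paper's argument ingredient for ingredient.
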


\begin{proof}
Let $w^\varepsilon=\phi(x,P+D_x u^\varepsilon)$. Notice first that
$$
\int_{\mathbb T^n \times \mathbb R^n} D_p H \cdot (p-P) \, \phi(x,p) \, d\mu 
= \lim_{\varepsilon \rightarrow 0} \int_{\mathbb T^n} D_p H(x,P+D_x u^\varepsilon) 
\cdot D_x u^\varepsilon w^\varepsilon d \theta_{\mu^\varepsilon}.
$$
Integrating by parts the right hand side of the above equality we obtain
\begin{align}
&\int_{\mathbb T^n} D_p H(x,P+D_x u^\varepsilon) \cdot D_x u^\varepsilon w^\varepsilon d
\theta_{\mu^\varepsilon}=-\int_{\mathbb T^n} u^\varepsilon \text{div}(D_pH w^\varepsilon
\theta_{\mu^\varepsilon}) dx \notag\\
=&- \int_{\mathbb T^n} u^\varepsilon(
\text{div}(D_pH\theta_{\mu^\varepsilon})w^\varepsilon+D_pH \cdot D_x w^\varepsilon 
\theta_{\mu^\varepsilon}) dx
=\int_{\mathbb T^n} u^\varepsilon (\dfrac{\varepsilon^2}{2} 
\Delta \theta_{\mu^\varepsilon} w^\varepsilon 
- D_p H \cdot D_x w^\varepsilon \theta_{\mu^\varepsilon})dx. \notag
\end{align}
After several computations, by using (\ref{statHJeps}) we get
$$
D_pH \cdot D_x w^\varepsilon=-\{\phi,H\} + \dfrac{\varepsilon^2}{2} \phi_{p_i} \Delta u^\varepsilon_{x_i}.
$$
Hence
\begin{align}
&\dfrac{\varepsilon^2}{2} \Delta \theta_{\mu^\varepsilon} w^\varepsilon
- D_p H \cdot D_x w^\varepsilon \theta_{\mu^\varepsilon}
=\dfrac{\varepsilon^2}{2} \Delta \theta_{\mu^\varepsilon} w^\varepsilon+\{\phi,H\}{\theta_{\mu^\varepsilon}} 
- \dfrac{\varepsilon^2}{2} \phi_{p_i} \Delta u^\varepsilon_{x_i}{\theta_{\mu^\varepsilon}} \notag\\
=&\dfrac{\varepsilon^2}{2} \Delta w^\varepsilon
\theta_{\mu^\varepsilon}+\dfrac{\varepsilon^2}{2}(\text{div}(D_x \theta_{\mu^\varepsilon} w^\varepsilon)
-\text{div}(D_x w^\varepsilon \theta_{\mu^\varepsilon}))+\{\phi,H\}{\theta_{\mu^\varepsilon}} 
- \dfrac{\varepsilon^2}{2} \phi_{p_i} \Delta u^\varepsilon_{x_i} {\theta_{\mu^\varepsilon}} \notag\\
=&\dfrac{\varepsilon^2}{2} (\phi_{p_j p_k} u^\varepsilon_{x_i x_j} u^\varepsilon_{x_i x_k}
{+ \phi_{p_j x_i} u^{\varepsilon}_{x_j x_i}}
+\phi_{x_i x_i} +\phi_{p_i} \Delta u^\varepsilon_{x_i}) \theta_{\mu^\varepsilon}  \notag\\
&\hspace{.4cm}+\dfrac{\varepsilon^2}{2}(\text{div}(D_x \theta_{\mu^\varepsilon} w^\varepsilon)
-\text{div}(D_x w^\varepsilon \theta_{\mu^\varepsilon})) +\{\phi,H\} {\theta_{\mu^\varepsilon}}  
- \dfrac{\varepsilon^2}{2} \phi_{p_i} \Delta u^\varepsilon_{x_i} {\theta_{\mu^\varepsilon}}  \notag\\
=& \rho^\varepsilon +\dfrac{\varepsilon^2}{2} \phi_{x_i x_i} \theta_{\mu^\varepsilon} 
{+ \frac{\varepsilon^2}{2} \phi_{p_j x_i} u^{\varepsilon}_{x_j x_i}} 
+ \dfrac{\varepsilon^2}{2}(\text{div}(D_x \theta_{\mu^\varepsilon} w^\varepsilon)
-\text{div}(D_x w^\varepsilon \theta_{\mu^\varepsilon})). \notag
\end{align}
Therefore
\begin{align}
&\int_{\mathbb T^n \times \mathbb R^n} D_pH \cdot (p-P) \, \phi(x,p) \, d\mu \notag \\
&\hspace{.2cm}= \lim_{\varepsilon \rightarrow 0}  \int_{\mathbb T^n} u^\varepsilon
\left[ \rho^\varepsilon +\dfrac{\varepsilon^2}{2} \phi_{x_i x_i} \theta_{\mu^\varepsilon} 
{+ \frac{\varepsilon^2}{2} \phi_{p_j x_i} u^{\varepsilon}_{x_j x_i}} 
+ \dfrac{\varepsilon^2}{2}(\text{div}(D_x \theta_{\mu^\varepsilon} w^\varepsilon)
-\text{div}(D_x w^\varepsilon \theta_{\mu^\varepsilon})) \right] dx. \label{comp_eps}
\end{align}
Since $u^\varepsilon$ converges uniformly to $u$,
$$
\lim_{\varepsilon \rightarrow 0}  \int_{\mathbb T^n}u^\varepsilon \rho^\varepsilon dx 
=\int_{\mathbb T^n} u \, d\rho.
$$
The second term in the right hand side of (\ref{comp_eps}) obviously converges to $0$ as $\varepsilon
\rightarrow 0$. 
The third term also tends to $0$ by (\ref{est}).\\
Let's look at the last term.
We have
\begin{align}
& \left|\lim_{\varepsilon \rightarrow 0}\dfrac{\varepsilon^2}{2} 
\int_{\mathbb T^n} u^\varepsilon (\text{div} (D_x \theta_{\mu^\varepsilon} w^\varepsilon)
{-}\text{div} (D_x w^\varepsilon \theta_{\mu^\varepsilon} )) dx \right| 
= \left| \lim_{\varepsilon \rightarrow 0}\dfrac{\varepsilon^2}{2} \int_{\mathbb T^n} 
{-} D_x u^\varepsilon \cdot D_x \theta_{\mu^\varepsilon} w^\varepsilon
+D_x u^\varepsilon \cdot D_xw^\varepsilon \theta_{\mu^\varepsilon} dx \right| \notag \\
=& \left| \lim_{\varepsilon \rightarrow 0} \dfrac{\varepsilon^2}{2}\int_{\mathbb T^n} 
\text{div}(D_x u^\varepsilon w^\varepsilon)\theta_{\mu^\varepsilon}
+D_x u^\varepsilon \cdot D_x w^\varepsilon \theta_{\mu^\varepsilon} dx \right|
= \left|\lim_{\varepsilon \rightarrow 0} \dfrac{\varepsilon^2}{2}
\int_{\mathbb T^n} (\Delta u^\varepsilon w^\varepsilon
+ 2{D_x u^\varepsilon \cdot D_x w^\varepsilon} ) \theta_{\mu^\varepsilon} dx \right|  \notag\\
\le & \lim_{\varepsilon \rightarrow 0} C\varepsilon^2 \int_{\mathbb T^n} |D^2_{xx} u^\varepsilon|
\theta_{\mu^\varepsilon} dx 
\le \lim_{\varepsilon \rightarrow 0} C\varepsilon =0, \notag
\end{align}
which implies (\ref{comp1}). Relation (\ref{comp2}) can be derived similarly.
\end{proof}
As a consequence, we have the following corollary.
\begin{cor} \label{rem_comp}
Let $u ( \cdot, P)$ be a classical solution of \eqref{statHJ}, 
and let $\mu$ be the corresponding Mather measure given by Theorem \ref{exist}.
Then,     
$$
D_pH \cdot (p-P-D_x u)=0 \quad \text{ in supp}\, \mu.
$$

\end{cor}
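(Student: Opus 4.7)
The proof should be a short combination of the two identities in Theorem \ref{comp}, exploiting the fact that a classical solution $u(\cdot, P)$ of \eqref{statHJ} is an admissible test function for part (ii).

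First I would observe that by the very definition of ``classical solution,'' the function $u(\cdot, P)$ belongs to $C^1(\mathbb{T}^n)$ (in fact more), so it is a legitimate choice for $\eta$ in identity \eqref{comp2}. Plugging $\eta = u$ into (ii) gives
\[
\int_{\mathbb{T}^n \times \mathbb{R}^n} D_p H \cdot D_x u \, \phi(x,p) \, d\mu = \int_{\mathbb{T}^n} u \, d\rho,
\]
for every $\phi \in C(\mathbb{T}^n \times \mathbb{R}^n)$. On the other hand, identity \eqref{comp1} already states
\[
\int_{\mathbb{T}^n \times \mathbb{R}^n} D_p H \cdot (p - P) \, \phi(x,p) \, d\mu = \int_{\mathbb{T}^n} u \, d\rho.
\]
Subtracting these two equalities — the right-hand sides are literally the same — yields
\[
\int_{\mathbb{T}^n \times \mathbb{R}^n} D_p H(x,p) \cdot \bigl( p - P - D_x u(x,P) \bigr) \, \phi(x,p) \, d\mu = 0,
\]
valid for every $\phi \in C(\mathbb{T}^n \times \mathbb{R}^n)$.

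Since the continuous function $(x,p) \mapsto D_p H(x,p) \cdot (p - P - D_x u(x, P))$ integrates to zero against $\phi \, d\mu$ for all continuous $\phi$, a standard density argument (approximate the sign of the integrand by continuous $\phi$, using that $\mu$ has compact support by \eqref{suppcompact}) forces this function to vanish $\mu$-almost everywhere, hence on $\operatorname{supp}\mu$ by continuity. The only subtle point — not really an obstacle — is that Theorem \ref{comp} is stated for $\phi$ smooth in its proof but the resulting identities extend to continuous $\phi$ by approximation combined with the compactness of $\operatorname{supp}\mu$; this is what allows the pointwise conclusion on the support, rather than just an almost-everywhere statement. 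No further estimates are required, since all the heavy lifting was carried out in Theorem \ref{comp}.
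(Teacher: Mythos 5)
Your proof is correct and follows essentially the same route as the paper's: take $\eta = u$ in \eqref{comp2}, compare the resulting identity with \eqref{comp1}, and subtract to obtain $\int D_pH\cdot(p-P-D_x u)\,\phi\,d\mu = 0$ for all $\phi$, which forces the integrand to vanish on $\operatorname{supp}\mu$. The extra remarks on the density argument and the compactness of the support are sound but simply make explicit a step the paper leaves implicit.
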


\begin{proof}
By (\ref{comp1}) and (\ref{comp2})
$$
\int_{\mathbb T^n} D_pH \cdot (p-P-D_x u) \, \phi \, d\mu =0,
$$
for all $\phi$. Therefore, the conclusion follows. 
\end{proof}

\end{section}

\begin{section}{Examples} \label{Ex}

In this section, we study non-trivial examples where the Mather measure $\mu$ is invariant under the Hamiltonian dynamics. 
Notice that, by \eqref{veryimp}, the Mather measure $\mu$ is invariant under the Hamiltonian dynamics 
if and only if the dissipation measures $(m_{kj})$ vanish.
An example in Section \ref{counter} shows that this is not always guaranteed.
As explained in \cite{Ev}, the dissipation measures $m_{kj}$
record the jump of the gradient $D_x u$ along the shock lines.

We investigate now under which conditions we still have the invariance property (1).
We provide some partial answers by studying several examples,
which include the important class of strongly quasiconvex Hamiltonians (see \cite{F5}).

\begin{subsection}{$H$ is uniformly convex} 

There exists $\alpha>0$ so that $D_{pp} ^2 H \ge \alpha>0$.\\
Let $\lambda=0$ in (\ref{iul}) then
$$
0=\int_{\mathbb T^n \times \mathbb R^n} H_{p_k p_j} dm_{kj},
$$
which implies $m_{kj}=0$ for all $1 \le k,j \le n$.
We then can follow the same steps as in \cite{EG} to get that $\mu$ also satisfies (2).

\end{subsection}

\begin{subsection}{Uniformly convex conservation law}

Suppose that there exists $F(p,x)$, strictly convex in $p$, such that $\{F,H\}$=0. Then $m=0$. 

\end{subsection}

\begin{subsection}{Some special non-convex cases}
The cases we consider here are somehow variants of the uniformly convex case.

Suppose there exists $\phi$ uniformly convex and a smooth real function $f$ 
such that either $\phi=f(H)$ or $H=f(\phi)$.
Then, by (\ref{veryimp}) we have $m_{kj}=0$ for all $k,j$.
In particular, if $H=f(\phi)$ with $f$ increasing, then $H$ is quasiconvex.\\
One explicit example of the above variants is $H(x,p) = (|p|^2+V(x))^2$, 
where $V: \mathbb T^n \rightarrow \mathbb R$ is smooth and may take negative values.
Then $H(x,p)$ is not convex in $p$ anymore.
Anyway, we can choose $\phi(x,p)=|p|^2+V(x)$, so that $H(x,p)=(\phi(x,p))^2$ and $\phi$ 
is uniformly convex in $p$.
Therefore, $\mu$ is invariant under the Hamiltonian dynamics.\\

\end{subsection}

\begin{subsection} {The case when $n=1$}

Let's consider the case $H(x,p)=H(p)+V(x)$.\\
In this particular case, property (H3) implies that $|H(x,p)| \to \infty$ as $|p| \to + \infty$.
Let us suppose that 
$$
\lim_{|p| \to + \infty} H (p) = + \infty.
$$
Assume also that there exists $p_0 \in \mathbb R$ such that $H'(p)=0$ 
if and only if $p=p_0$ and $H''(p_0) \ne 0$. 
Notice that $H(p)$ does not need to be convex.
Obviously, uniform convexity of $H$ implies this condition.\\
We will show that $m_{11}=0$, which implies that $\mu$ is invariant under the Hamiltonian dynamics.
From our assumptions, we have that 
$H'(p)>0$ for $p >p_0$, $H'(p)<0$ for $p<p_0$ and hence $H''(p_0)>0$.
Then there exists a neighborhood $(p_0 - r, p_0+r)$ of $p_0$ such that
$$
H''(p) > \dfrac{H''(p_0)}{2}, \hspace{1cm} \forall~p \in (p_0 - r, p_0+r).
$$
And since the support of $m_{11}$ is bounded, we may assume
$$
\text{supp}(m_{11}) \subset \mathbb T \times [-M,M], 
$$
for some $M>0$ large enough. We can choose $M$ large so that  $(p_0-r,p_0+r) \subset (-M,M)$.\\
Since $|H'(p)|^2 >0$ for $p \in [-M,M] \setminus (p_0-r,p_0+r)$ and $[-M,M] \setminus (p_0-r,p_0+r)$ is compact, there exists $\gamma>0$ such that
$$
|H'(p)|^2 \ge \gamma>0,\hspace{1cm} \forall~p \in [-M,M] \setminus (p_0-r,p_0+r).
$$
Hence, by choosing $\lambda \gg 0$
$$
\lambda |H'(p)|^2 + H''(p) \ge \dfrac{H''(p_0)}{2}, \hspace{1cm} \forall~ p \in [-M,M],
$$
which shows $m_{11}=0$ by (\ref{iul}).
\end{subsection}

\begin{subsection} {Case in which there are more conserved quantities}
Let's consider
$$
H(x,p)=H(p)+V(x_1+...+x_n), 
$$
where $V: \mathbb T \rightarrow \mathbb R$ is smooth.\\
For $k \ne j$, define $\Phi^{kj}=p_k-p_j$. 
It is easy to see that $\{H, \Phi^{kj}\}=0$ for any $k \ne j$.\\
Therefore $\{H, (\Phi^{kj})^2\}=0$ for any $k \ne j$.\\
For fixed $k \ne j$, let $\phi= (\Phi^{kj})^2$ in (\ref{veryimp}) then
$$
2\int_{\mathbb T^n \times \mathbb R^n}  (m_{kk}-2m_{kj}+m_{jj}) \, dxdp=0.
$$
The matrix of {\it dissipation measures} $(m_{kj})$ is non-negative definite, 
therefore $m_{kk}-2m_{kj}+m_{jj} \ge 0$. Thus, $m_{kk}-2m_{kj}+m_{jj}=0$ for any $k \ne j$.\\
Let $\varepsilon \in (0,1)$ and take $\xi=(\xi_1,...,\xi_n)$,
where $\xi_k=1+\varepsilon, \xi_j=-1$ and $\xi_i=0$ otherwise. We have
$$
0 \le m_{kj} \xi_k \xi_j=(1+\varepsilon)^2 m_{kk} -2(1+\varepsilon) m_{kj}
+m_{jj}=2\varepsilon(m_{kk}-m_{kj})+\varepsilon^2 m_{kk}.
$$
Dividing both sides of the inequality above by $\varepsilon$ and letting $\varepsilon \rightarrow 0$,
$$
m_{kk}-m_{kj} \ge 0.
$$
Similarly, $m_{jj}-m_{kj} \ge 0$. 
Thus, $m_{kk}-m_{kj}=m_{jj}-m_{kj}=0$ for all $k \ne j$.\\
Hence, there exists a non-negative measure $m$ such that
$$
m_{kj}=m \ge 0, \hspace{1cm} \forall~k,j.
$$
Therefore, (\ref{iul}) becomes
$$
0=\int_{\mathbb T^n \times \mathbb R^n} e^{\lambda H} 
\Big(\lambda \big(\sum_j H_{p_j} \big)^2 + \sum_{j,k} H_{p_j p_k} \Big) \, dm.
$$
We here point out two cases which guarantee that $m=0$. In the first case, 
assuming additionally that $H(p)=H_1(p_1)+...H_n(p_n)$ and $H_2,...,H_n$ are convex, 
but not necessarily uniformly convex (their graphs may have flat regions) 
and $H_1$ is uniformly convex, then we still have $m=0$.

In the second case, suppose that $H(p)=H(|p|)$, where $H:[0,\infty) \rightarrow \mathbb R$ is smooth,
$H'(0)=0, H''(0)>0$ and $H'(s)>0$ for $s>0$.
Notice that $H$ is not necessarily convex.
This example is similar to the example above when $n=1$.
Then for $p \ne 0$
$$
\lambda \big(\sum_j H_{p_j} \big)^2 + \sum_{j,k} H_{p_j p_k}=n
\dfrac{H'}{|p|}+\dfrac{(p_1+...+p_n)^2}{|p|^2} \left( \lambda (H')^2+H''-\dfrac{H'}{|p|} \right),
$$
and at $p=0$
$$
\lambda \big(\sum_j H_{p_j}(0) \big)^2 + \sum_{j,k} H_{p_j p_k}(0)=nH''(0)>0.
$$
\end{subsection}
So, we can choose $r>0$, small enough, so that for $|p| <r$
$$
\lambda \big( \sum_j H_{p_j} \big)^2 + \sum_{j,k} H_{p_j p_k}> \dfrac{n}{2}H''(0)>0.
$$
Since the support of $m$ is bounded, there exists $M>0$ large enough
$$
\text{supp} \, m \subset \mathbb T^n \times \{ p: |p| \le M\}.
$$
Since $\min_{s \in [r,M]} H'(s) >0$, by choosing $\lambda \gg 0$, we finally have for $|p| \le M$
$$
\lambda \big( \sum_j H_{p_j} \big)^2 + \sum_{j,k} H_{p_j p_k} \ge \beta>0,
$$
for $\beta =\dfrac{n}{2} \min \left\{ H''(0), \dfrac{\min_{s \in [r,M]} H'(s) }{M} \right\}$.\\
Thus $m=0$, and therefore $\mu$ is invariant under the Hamiltonian dynamics.

\begin{subsection}{Quasiconvex Hamiltonians: a special case} \label{spcase}
Let's consider
$$
H(x,p)=H(|p|)+V(x),
$$
where $H:[0,\infty) \rightarrow \mathbb R$ is smooth, $H'(0)=0, H''(0)>0$ and $H'(s)>0$ for $s>0$.\\
Once again, notice that $H$ is not necessarily convex.
We here will show that $(m_{jk})=0$. 
For $p \ne 0$ then
$$
(\lambda H_{p_j} H_{p_k}+H_{p_j p_k}) m_{jk}=\dfrac{H'}{|p|} (m_{11}+...+m_{nn})
+ \left( \lambda (H')^2 +H''-\dfrac{H'}{|p|} \right) \dfrac{p_j p_k m_{jk}}{|p|^2}.
$$
For any symmetric, non-negative definite matrix $m=(m_{jk})$ we have the following inequality
$$
0 \le p_j p_k m_{jk} \le |p|^2 \, \text{trace} \, m = |p|^2(m_{11}+...+m_{nn}).
$$
There exists $r>0$ small enough so that for $|p|<r$
$$
\dfrac{H'}{|p|}>\dfrac{3}{4} H''(0); \quad \quad 
\left| \dfrac{H'}{|p|} - H'' \right| <\dfrac{1}{4} H''(0).
$$
Hence for $|p|<r$
$$
(\lambda H_{p_j} H_{p_k}+H_{p_j p_k}) m_{jk} \ge \dfrac{1}{2} H''(0) (m_{11}+...+m_{nn}).
$$
Since the support of $(m_{jk})$ is bounded, there exists $M>0$ large enough
$$
\text{supp} \, m_{jk} \subset \mathbb T^n \times \{ p: |p| \le M\}, \hspace{1cm} \forall~j,k.
$$
Since $\min_{s \in [r,M]} H'(s)>0$, by choosing $\lambda \gg 0$ we finally have for $|p| \le M$
$$
(\lambda H_{p_j} H_{p_k}+H_{p_j p_k}) m_{jk} \ge \beta(m_{11}+...+m_{nn}),
$$
for $\beta =\min \left\{ \dfrac{H''(0)}{2}, \dfrac{\min_{s \in [r,M]} H'(s) }{M} \right\}>0$.\\
We then must have $m_{11}+...m_{nn}=0$, which implies $(m_{jk})=0$.
Thus, $\mu$ is invariant under the Hamiltonian dynamics in this case.\\
We now derive the property (2) of $\mu$ rigorously. 
Since the support of $\mu$ is also bounded, we can use a similar procedure as above to show that
$\phi(x,p)=e^{\lambda H(x,p)}$ is uniformly convex in $\mathbb T^n \times \bar B(0,M) \supset
\text{supp}(\mu)$ for some $\lambda$ large enough.\\
More precisely,
$$
\phi_{p_j p_k} \xi_j \xi_k \ge e^{\lambda H} \beta |\xi|^2, \quad \xi \in \mathbb R^n,~(x,p) \in \mathbb T^n \times \bar B(0,M),
$$
for $\beta$ chosen as above. Then doing the same steps as in \cite{EG}, we get $\mu$ satisfies (2).\\
There is another simple approach to prove (2) by using the properties we get in this non-convex setting.
Let's just assume that $u$ is $C^1$ on the support of $\mu$.\\
By Remark \ref{rem_comp}, it follows that $D_pH.(p-P-Du)=0$ on support of $\mu$. 
And since $D_pH(x,p)=H'(|p|) \dfrac{p}{|p|}$ for $p \ne 0$ and $H'(|p|)>0$, we then have $p.(p-P-Du)=0$ on support of $\mu$. 
Hence $|p|^2 = p.(P+Du)$ on $\text{supp}(\mu)$.\\
Besides, $H(x,p)=H(x,P+Du(x)) = \overline{H}(P)$ on $\text{supp}(\mu)$ by property (a) of Mather measure and the assumption that $u$ is $C^1$ on $\text{supp}(\mu)$. It follows that $H(|p|)=H(|P+Du|)$. 
Therefore, $|p|=|P+Du|$ by the fact that $H(s)$ is strictly increasing.\\
So we have $|p|^2 = p.(P+Du)$ and $|p|=|P+Du|$ on $\text{supp}(\mu)$, which implies $p=P+Du$ on $\text{supp}(\mu)$, which is the property (2) of $\mu$.\\
\end{subsection}

\begin{subsection}{Quasiconvex Hamiltonians} \label{qcx}

We treat now the general case of uniformly quasiconvex Hamiltonians.
We start with a definition.
\begin{defn}
A smooth set $A \subset \mathbb{R}^n$ is said to be \textit{strongly convex with convexity constant $c$}
if there exists a positive constant $c$ with the following property.
For every $p \in \partial A$ there exists an orthogonal coordinate system $(q_1, \dots, q_n)$ centered at $p$,
and a coordinate rectangle $R = (a_1, b_1){\times}\dots{\times}(a_n, b_n)$ containing $p$
such that $T_p \partial A = \{ q_n =0 \}$ and
$A \cap R \subset \{q \in R : c \sum_{i=1}^{n-1} |q_i|^2 \leq q_n \leq b_n \}$.
\end{defn}
The previous definition can be stated in the following equivalent way, 
by requiring that for every $p \in \partial A$ 
$$
\left( \mathbf{B}_p \mathbf{v} \right) \cdot \mathbf{v} \geq c |\mathbf{v} |^2 \quad \text{ for every }\mathbf{v} \in T_p \partial A,
$$
where $\mathbf{B}_p : T_p \partial A \times T_p \partial A \to \mathbb{R}$ is 
the second fundamental form of $\partial A$ at $p$.

We consider in this subsection strongly quasiconvex Hamiltonians.
That is, we assume that there exists $c >0$ such that
\begin{itemize}

\item[(j)] $\{ p \in \mathbb{T}^n : H(x,p) \leq a \}$ is strongly convex with convexity constant $c$
for every $a \in \mathbb{R}$ and for every $x \in \mathbb{T}^n$.

\end{itemize}
In addition, we suppose that there exists $\alpha \in \mathbb{R}$ such that for every $x \in \mathbb{T}^n$
\begin{itemize}

\item[(jj)] There exists unique $\overline{p} \in \mathbb{R}^n$ s.t. $D_p H (x, \overline{p}) = 0$, and 
$$
D^2_{pp} H (x, \overline{p}) \geq \alpha. 
$$

\end{itemize}
Notice that the special case presented in Section \ref{spcase},
where the level sets are spheres, fits into this definition.
We will show that under hypotheses (j)--(jj) there exists $\lambda > 0$ such that
$$
\lambda \, D_p H \otimes D_p H + D^2_{pp} H \quad \text{ is positive definite. }
$$
From this, thanks to relation \eqref{iul}, we conclude that $m_{kj} = 0$.
First, we state a well-known result.
We give the proof below, for the convenience of the reader. 
\begin{prop}
Let (j)--(jj) be satisfied, and let $(x^*,p^*) \in \mathbb{T}^n \times \mathbb{R}^n$ be such that $D_p H (x^*,p^*) \neq 0$.
Then
\begin{equation} \label{diffgeo}
D_p H (x^*,p^*) \perp T_{p^*} \mathcal{C} \quad \text{ and }\quad D^2_{pp} H (x^*,p^*) = |D_p H (x^*,p^*)| \mathbf{B}_{p^*},
\end{equation}
where $\mathbf{B}_{p^*}$ denotes the second fundamental form of the level set
$$
\mathcal{C}:= \{ p \in \mathbb{R}^n : H (x^*,p) = H (x^*,p^*) \}
$$ 
at the point $p^*$.
\end{prop}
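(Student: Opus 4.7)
The plan is to deduce both assertions by differentiating, once and twice, the identity that $H(x^*,\cdot)$ is constant along $\mathcal{C}$. Pick any smooth curve $\gamma:(-\delta,\delta)\to\mathcal{C}$ with $\gamma(0)=p^*$ and arbitrary tangent $\gamma'(0)=v\in T_{p^*}\mathcal{C}$. Differentiating $H(x^*,\gamma(t))\equiv H(x^*,p^*)$ at $t=0$ yields
\[
D_pH(x^*,p^*)\cdot v = 0 \qquad \text{for every } v\in T_{p^*}\mathcal{C},
\]
which is the first assertion. In particular, since $D_pH(x^*,p^*)\neq 0$, the unit vector $\nu := D_pH(x^*,p^*)/|D_pH(x^*,p^*)|$ is a well-defined unit normal to $\mathcal{C}$ at $p^*$, pointing in the direction of increasing $H$, hence outward from the sublevel set $\{p:H(x^*,p)\le H(x^*,p^*)\}$ (which is strongly convex by (j)).

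Differentiating the same identity a second time at $t=0$ gives
\[
D^2_{pp}H(x^*,p^*)(v,v) + D_pH(x^*,p^*)\cdot\gamma''(0) = 0,
\]
so that, factoring $\nu$,
\[
D^2_{pp}H(x^*,p^*)(v,v) = -\,|D_pH(x^*,p^*)|\,\langle\gamma''(0),\nu\rangle.
\]
By the standard geometric definition of the second fundamental form associated to the outward normal $\nu$, one has $\mathbf{B}_{p^*}(v,v) = -\langle\gamma''(0),\nu\rangle$ for \emph{any} curve $\gamma$ on $\mathcal{C}$ with $\gamma'(0)=v$; this is precisely the sign convention under which $\mathbf{B}_{p^*}$ is positive definite on strongly convex boundaries, in accordance with the paper's convexity-constant definition. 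Polarizing in $v$ yields
\[
D^2_{pp}H(x^*,p^*)(v,w) = |D_pH(x^*,p^*)|\,\mathbf{B}_{p^*}(v,w) \qquad \text{for all } v,w\in T_{p^*}\mathcal{C},
\]
which is the second assertion.

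There is no serious obstacle: the argument is essentially the chain rule plus the intrinsic definition of $\mathbf{B}$, and the main point to keep track of is the sign convention (which is forced on us by (j)). One small conceptual remark worth flagging in the write-up is that the displayed identity $D^2_{pp}H = |D_pH|\mathbf{B}_{p^*}$ is to be read as an identity of bilinear forms on $T_{p^*}\mathcal{C}$, since $\mathbf{B}_{p^*}$ is defined only there; the behaviour of $D^2_{pp}H$ in the normal direction $\nu$ is not controlled by this relation, but it will be harmless when the proposition is applied, because that direction will be handled by the complementary rank-one term $\lambda\,D_pH\otimes D_pH$ arising in \eqref{iul} upon choosing $\lambda$ large.
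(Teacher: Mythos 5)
Your proof is correct, and it handles the one genuinely delicate point --- the sign convention for $\mathbf{B}_{p^*}$ --- explicitly and accurately. The route, however, is different from the paper's. The paper constructs a smooth adapted orthonormal frame $\{\tau_1,\ldots,\tau_{n-1},\nu\}$ near $p^*$, differentiates $D_pH\cdot\tau_i=0$ once more along $\tau_j$, and then passes from $D_p\tau_i$ to $D_p\nu$ via the identity obtained by differentiating $\tau_i\cdot\nu=0$; in other words it realizes $\mathbf{B}_{p^*}$ through the Weingarten map (the tangential derivative of the unit normal). You instead take an arbitrary curve $\gamma$ in $\mathcal C$ through $p^*$, differentiate $H(x^*,\gamma(t))=\mathrm{const}$ twice, and realize $\mathbf{B}_{p^*}(v,v)$ as (minus) the normal component of $\gamma''(0)$, then polarize. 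These are two standard, equivalent characterizations of the second fundamental form; your version avoids constructing a local frame but requires invoking the well-known fact that the normal part of $\gamma''(0)$ depends only on $\gamma'(0)$, which you correctly cite as standard. Your closing remark --- that \eqref{diffgeo} is an identity of bilinear forms on $T_{p^*}\mathcal C$ only, with the normal direction handled in the application by the rank-one term $\lambda\, D_pH\otimes D_pH$ --- is correct and worth keeping; it matches exactly how the paper later uses the result (Case 2 of Section~\ref{qcx} applies the estimate only to the tangential component $\mathbf{v}^\parallel$).
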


\begin{proof}
By the smoothness of $H$, there exists a neighborhood $U \subset \mathbb{R}^n$ of $p^*$ 
and $n$ smooth functions $\nu : U \to \mathcal{S}^{n-1}$,
$\tau_i : U \to \mathcal{S}^{n-1}$, $i=1,\ldots,n-1$, 
such that for every $p \in U$ the vectors $\{ \tau_1 (p), \ldots, \tau_{n-1} (p) , \nu (p) \}$ 
are a smooth orthonormal basis of $\mathbb{R}^n$, 
and for every $p \in U \cap \mathcal{C}$ $\tau_1 (p), \ldots, \tau_{n-1} (p) \in T_p \mathcal{C}$.
Let now $i,j \in \{ 1, \ldots, n-1 \}$ be fixed. 
Since 
$$
H(x^*, p ) = a \quad \forall \, p \in U,
$$
differentiating w.r.t $\tau_i (p)$ we have
\begin{equation} \label{DH}
D_p H (x^*, p) \cdot \tau_i (p) = 0 \quad \forall \, p \in U \cap \mathcal{C}.
\end{equation}
Computing last relation at $p=p^*$ we get that $D_p H (x^*,p^*) \perp T_{p^*} \mathcal{C}$.
Differentiating \eqref{DH} along the direction $\tau_j (p)$ and computing at $p=p^*$
\begin{equation} \label{uj}
\left( D^2_{pp} H (x^*, p^*) \tau_j (p^*) \right) \cdot \tau_i (p^*)  + D_p H (x^*, p^*) \cdot \left( D_p \tau_i (p^*) \tau_j (p^*) \right)  = 0.
\end{equation}
Notice that by differentiating along the direction $\tau_{j} (p)$ the identity $\tau_{i} (p) \cdot \nu (p) = 0$
and computing at $p^*$ we get
$$
\left( D_p \tau_i (p^*) \tau_j (p^*) \right) \cdot \nu (p^*)
= - \left( D_p \nu (p^*) \tau_j (p^*) \right) \cdot \tau_i (p^*).
$$
Plugging last relation into \eqref{uj}, and choosing $\nu (p^*)$ oriented in the direction of $D_p H (x^*, p^*)$ we have 
\begin{align*}
&\left( D^2_{pp} H (x^*, p^*) \tau_j (p^*) \right) \cdot \tau_i (p^*)  
= - | D_p H (x^*, p^*) | \left( D_p \tau_i (p^*) \tau_j (p^*) \right) \cdot \nu (p^*) \\
&= | D_p H (x^*, p^*) | \left( D_p \nu (p^*) \tau_j (p^*) \right) \cdot \tau_i (p^*)
= | D_p H (x^*, p^*) | \left( \mathbf{B}_{p^*} \tau_j (p^*) \right) \cdot \tau_i (p^*).
\end{align*}

\end{proof}

For every vector $v \in \mathbb{R}^n$, we consider the  decomposition
$$
v = v_{\parallel} \mathbf{v^{\parallel}} + v_{\perp} \mathbf{v^{\perp}}, 
$$
with $v^{\parallel}, v^{\perp} \in \mathbb{R}$, $| \mathbf{v^{\parallel}} | = | \mathbf{v^{\perp}} | = 1$, 
$\mathbf{v^{\parallel}} \in T_{p^*} \mathcal{C}$,
and $\mathbf{v^{\perp}} \in ( T_{p^*} \mathcal{C} )^{\perp}$.
By hypothesis (jj) and by the smoothness of $H$, there exist $\tau > 0$
and $\alpha' \in ( 0, \alpha )$, independent of $(x,p)$, such that
$$
D_{pp}^2 H (x,p) \geq \alpha' \quad 
\text{ for every } (x,p) \in \{ | D_p H | \leq \tau\}.
$$
Let us now consider two subcases:\\
\textbf{Case 1: $(x,p) \in \{ | D_p H | \leq \tau\}$} \\

First of all, notice that
$$
\lambda \, D_p H \otimes D_p H v \cdot v
= \lambda | D_p H \cdot v |^2 = \lambda \, v_{\perp}^2 \, | D_p H |^2.
$$
Then, we have
$$
( \lambda \, D_p H \otimes D_p H + D^2_{pp} H ) v \cdot v 
= \lambda \, v_{\perp}^2 \, | D_p H |^2 + ( D^2_{pp} H  v \cdot v ) \geq \alpha' |v|^2.
$$
\textbf{Case 2: $(x,p) \in \{ | D_p H | > \tau\}$} \\

In this case we have
$$
D^2_{pp} H \mathbf{v^{\parallel}} \cdot \mathbf{v^{\parallel}}
\geq c |D_p H|,
$$
which then yields
\begin{align*}
D^2_{pp} H v \cdot v
&= v_{\parallel}^2 ( D^2_{pp} H \mathbf{v^{\parallel}} \cdot \mathbf{v^{\parallel}} )
+ 2 v_{\parallel} v_{\perp} ( D^2_{pp} H \mathbf{v^{\parallel}} \cdot \mathbf{v^{\perp}} )
+ v_{\perp}^2 ( D^2_{pp} H \mathbf{v^{\perp}} \cdot \mathbf{v^{\perp}} ) \\
&\geq c \, v_{\parallel}^2 |D_p H| + 2 v_{\parallel} v_{\perp} ( D^2_{pp} H \mathbf{v^{\parallel}} \cdot \mathbf{v^{\perp}} )
+ v_{\perp}^2 ( D^2_{pp} H \mathbf{v^{\perp}} \cdot \mathbf{v^{\perp}} ).
\end{align*}
By \eqref{suppcompact} we have
$$
| D^2_{pp} H | \leq C \quad \text{ along }\text{supp}\, \mu.
$$
Thus, 
\begin{align*}
&( \lambda \, D_p H \otimes D_p H + D^2_{pp} H ) v \cdot v \\
&\geq \lambda \, v_{\perp}^2 \, | D_p H |^2
+ c \, v_{\parallel}^2 |D_p H| + 2 v_{\parallel} v_{\perp} ( D^2_{pp} H \mathbf{v^{\parallel}} \cdot \mathbf{v^{\perp}} )
+ v_{\perp}^2 ( D^2_{pp} H \mathbf{v^{\perp}} \cdot \mathbf{v^{\perp}} ) \\
&\hspace{1cm}\geq v_{\perp}^2 \left( \lambda \, | D_p H |^2 - C \, \right)
- 2 C | v_{\parallel} | | v_{\perp} | + c \, v_{\parallel}^2 |D_p H| \\
&\hspace{1cm}> v_{\perp}^2 \Big( \lambda \, \tau^2 - C \Big(1 + \frac{1}{\eta^2} \Big) \Big) 
+ v_{\parallel}^2 ( c \, \tau - C \eta^2 \, ) .
\end{align*}
Choosing first $\eta^2 < \frac{c \, \tau}{C}$, and then
$$
\lambda > \frac{C}{\tau^2} \Big(1 + \frac{1}{\eta^2} \Big),
$$
we obtain
$$
( \lambda \, D_p H \otimes D_p H + D^2_{pp} H ) v \cdot v \geq \alpha'' |v|^2,
$$
for some $\alpha'' > 0$, independent of $(x,p)$.

\textbf{General Case} \\

In the general case, we have
$$
( \lambda \, D_p H \otimes D_p H + D^2_{pp} H ) v \cdot v \geq \gamma |v|^2,
$$
where $\gamma : = \min \{ \alpha' , \alpha'' \}$.\\
Similar to the case above, we basically have that $\phi(x,p)=e^{\lambda H(x,p)}$ 
is uniformly convex on the support of $\mu$ for $\lambda$ large enough.
Hence, by repeating again the same steps as in \cite{EG}, 
we finally get that $\mu$ satisfies (2).
As already mentioned in the introduction, we observe that one could also
study the case of uniformly convex Hamiltonians by duality, that is, by considering a function 
$\Phi: \mathbb{R} \to \mathbb{R}$ such that $\Phi ( H (x, \cdot))$ is convex for each $x \in \mathbb{T}^n$.
In this way, the dynamics can be seen as a reparametrization of the dynamics
associated to the convex Hamiltonian $\Phi (H)$.

\end{subsection}

\end{section}

\begin{section}{A one dimensional example of nonvanishing dissipation measure $m$} \label{counter}

In this section we sketch a one dimensional example 
in which the dissipation measure $m$ does not vanish.
We assume that the zero level set of the Hamiltonian $H : \mathbb{T} \times \mathbb{R} \to \mathbb{R}$
is the smooth curve in Figure \ref{fig}, and that everywhere else in the plane 
$(x,p)$ the signs of $H$ are as shown in the picture.
In addition, $H$ can be constructed in such a way that $(D_x H, D_p H) \neq (0,0)$ 
for every $(x,p) \in \{ (x,p) \in \mathbb{T} \times \mathbb{R}: H (x,p) = 0 \}$.
That is, the zero level set of $H$ does not contain any equilibrium point.
Consider now the piecewise continuous function 
$g : [0,1] \to \mathbb{R}$, with $g(0) = g(1)$, as shown in Figure \ref{fig2}.
\begin{figure}[htbp]
\begin{minipage}[b]{6.0cm}
   \centering
  \psfrag{a}{$x$}
\psfrag{b}{$p$}
\psfrag{c}{$H(x,p) >0$}
\psfrag{d}{$H(x,p) <0$}
\psfrag{1}{$1$}
\psfrag{0}{$0$}
\includegraphics[width=2.2in]{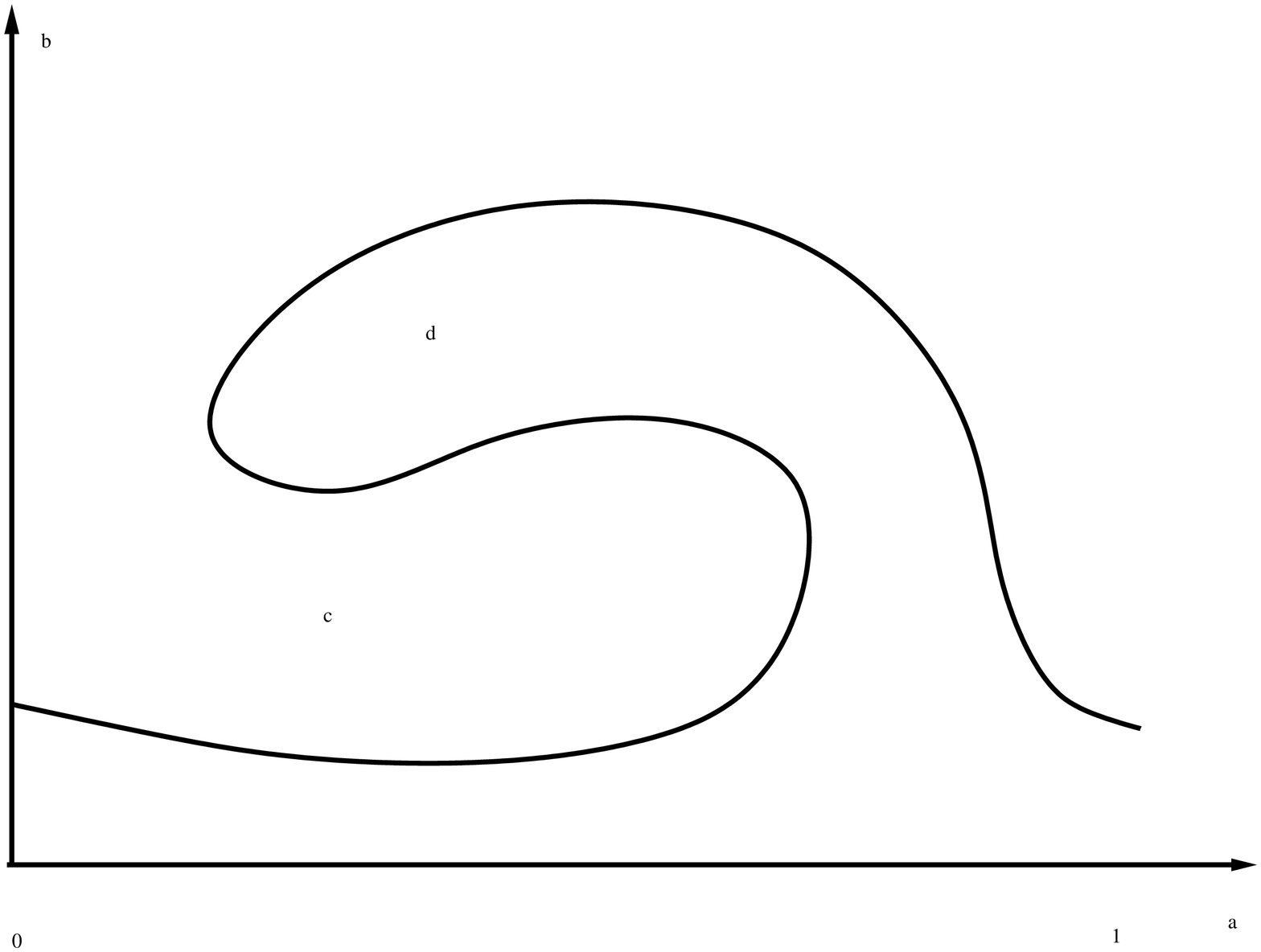}
\caption{$\{ H (x,p) = 0 \}$.}
\label{fig}
\end{minipage}
\hspace{3mm} 
\begin{minipage}[b]{6.cm}
  \centering
 \psfrag{a}{$x$}
\psfrag{b}{$p$}
\psfrag{c}{$H(x,p) >0$}
\psfrag{d}{$H(x,p) <0$}
\psfrag{1}{$1$}
\psfrag{0}{$0$}
\psfrag{g}{$g(x)$}
\psfrag{H}{$\{ H(x,p) = 0 \}$}
\includegraphics[width=2.2in]{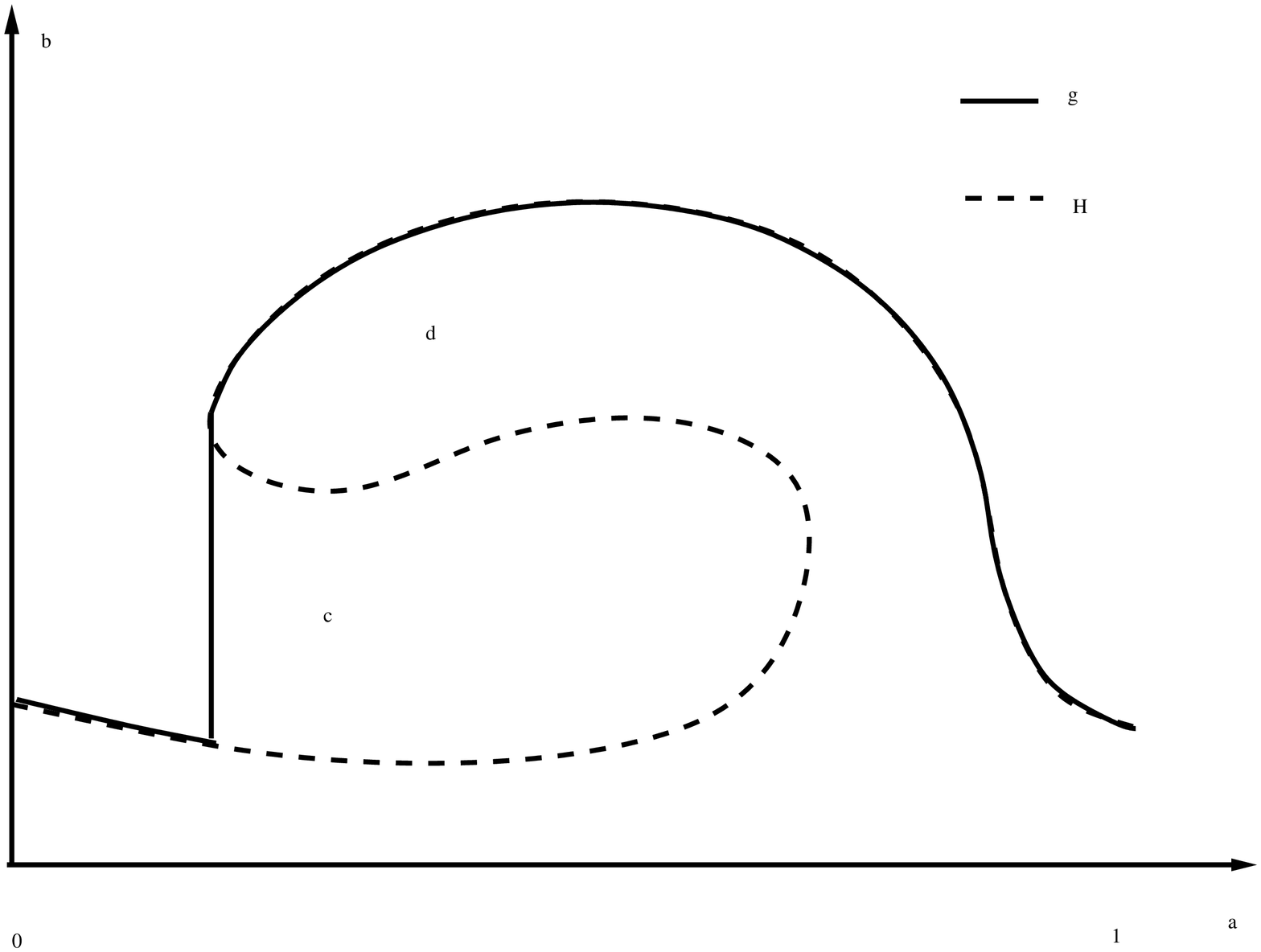}
\caption{$g(x)$.}
\label{fig2}
\end{minipage}
\end{figure}
Then, set
$$
P : = \int_0^1 g (x) \, dx,
$$
and define
$$
u (x,P) : = - P x + \int_0^x g (y) \, dy.
$$
One can see that $u(\cdot,P)$ is the unique periodic viscosity solution of
$$
H (x, P + D_x u(x,P)) = 0,
$$
that is equation \eqref{statHJ} with $\overline{H} (P) = 0$.
Assume now that a Mather measure $\mu$ exists, satisfying  property (1).
Then, the support of $\mu$ has necessarily to be concentrated on the graph of $g$,
and not on the whole level set $\{ H = 0 \}$.
However, any invariant measure by the Hamiltonian flow will be supported
on the whole set $\{ H = 0 \}$, due to the non existence of equilibria 
and to the one-dimensional nature of the problem, 
thus giving a contradiction.

\end{section}

\begin{section}{Acknowledgments}

The authors are grateful to Craig Evans and 
Fraydoun Rezakhanlou for very useful discussions on the subject of the paper.  

\end{section}

\bigskip


\bibliographystyle{alpha}

\end{document}